\documentclass[a4paper,reqno]{amsart}
\usepackage[paperheight=11in,paperwidth=8.5in]{geometry}
\usepackage[utf8]{inputenc}
\usepackage{amsmath,amsfonts,amsthm,amssymb}
\usepackage{dsfont}
\usepackage{graphicx,enumerate}
\usepackage{mathtools,bm,esint}
\usepackage{todonotes}

\usepackage{amsthm,amssymb,amsmath,hyperref}
\usepackage{color, mdframed}
\usepackage{tikz}
\usetikzlibrary{decorations.pathreplacing}
\usepackage{enumitem}



\makeatletter
\def\l@section{\@tocline{1}{12pt plus2pt}{0pt}{}{\bfseries}}
\def\l@subsection{\@tocline{2}{0pt}{2pc}{2pc}{}}
\makeatother
%
\setcounter{secnumdepth}{4}
\makeatletter
\def\subsection{\@startsection{subsection}{2}{\z@}%
	{-3.25ex\@plus -1ex \@minus -.2ex}%
	{1.5ex \@plus .2ex}%
	{\normalfont\bfseries\boldmath}}
\def\subsubsection{\@startsection{subsubsection}{3}%
	\z@{.5\linespacing\@plus.7\linespacing}{-.5em}%
	{\normalfont\bfseries\boldmath}}
\renewcommand\paragraph{\@startsection{paragraph}{4}{\z@}%
	{3.25ex \@plus1ex \@minus.2ex}%
	{-1em}%
	{\normalfont\normalsize\bfseries}}
\makeatother

\geometry{top=0.97in, bottom= 0.97in, left=0.97in, right=0.97in}

\numberwithin{equation}{section}
\newtheorem{theorem}{Theorem}[section]
\newtheorem*{theorem*}{Theorem}

\newtheorem{lemma}[theorem]{Lemma}
\newtheorem{proposition}[theorem]{Proposition}

\theoremstyle{definition}
\newtheorem{remark}[theorem]{Remark}

\newtheorem{definition}[theorem]{Definition} 

\newcommand{\R}{\mathbb{R}}

\newcommand{\Z}{\mathbb{Z}}
\newcommand{\N}{\mathbb{N}}


\newcommand{\br}[1]{\left(#1\right)}
\newcommand{\Br}[1]{\left[#1\right]}
\newcommand{\bR}[1]{\left(#1\right]}
\newcommand{\BR}[1]{\left\{#1\right\}}
\newcommand{\abs}[1]{\left\vert#1\right\vert}
\newcommand{\nrm}[1]{\left\Vert#1\right\Vert}
\newcommand{\ang}[1]{\left<#1\right>}

\newcommand{\set}[2]{\left\{ #1 \middle.: #2 \right\}}

\newcommand{\1}{\mathds{1}}

\newcommand{\id}{\mathrm{id}}
\newcommand{\str}{\mathrm{str}}
\newcommand{\para}{\mathrm{par}}


\def\beq{\begin{equation}}
	\def\eeq{\end{equation}}

\def\beq{\begin{equation}}
	\def\eeq{\end{equation}}

\def\a{\alpha}

\def\g{\gamma}

\def\F{\mathcal{F}}

\def\l{\lambda}
\def\J{\mathcal{J}}

\def\A{\mathcal{A}}
\def\B{\mathcal{B}}
\def\beq{\begin{equation}}
	\def\eeq{\end{equation}}

\def\beq{\begin{equation}}
	\def\eeq{\end{equation}}

\def\ep{\epsilon}
\def\L{\Lambda}
\def\I{\mathcal{I}}

\DeclareMathOperator{\supp}{supp}
\DeclareMathOperator{\sgn}{sgn}

\DeclareMathOperator{\Mod}{Mod}
\DeclareMathOperator{\Tr}{Tr}
\DeclareMathOperator{\Dil}{Dil}


\DeclareFontFamily{U}{mathx}{}
\DeclareFontShape{U}{mathx}{m}{n}{<-> mathx10}{}
\DeclareSymbolFont{mathx}{U}{mathx}{m}{n}
\DeclareMathAccent{\widehat}{0}{mathx}{"70}
\DeclareMathAccent{\widecheck}{0}{mathx}{"71}

\theoremstyle{plain}
\newtheorem{thm}{Theorem}[section]

\theoremstyle{definition}

\newtheorem{mthm}[thm]{Main Theorem}

\theoremstyle{remark}

\theoremstyle{plain}

\numberwithin{equation}{section}

\theoremstyle{plain} 
\newcommand{\thistheoremname}{}
\newtheorem{genericthm}[thm]{\thistheoremname}

  \newtheorem*{genericthm*}{\thistheoremname}
\newenvironment{namedthm*}[1]
  {\renewcommand{\thistheoremname}{#1}%
   \begin{genericthm*}}
  {\end{genericthm*}}


\def\beq{\begin{equation}}
\def\eeq{\end{equation}}

\def\beq{\begin{equation}}
\def\eeq{\end{equation}}

\def\l{\lambda}

\def\L{\mathcal{L}}

\def\g{\gamma}

\def\a{\alpha}

\def\p{\Phi}

\def\ep{\epsilon}

\def\l{\Lambda}

\def\R{\mathbb{R}}

\def\Z{\mathbb{Z}}

\def\p{\mathcal{P}}

\def\F{\mathcal{F}}
\def\J{\mathcal{J}}
\def\I{\mathcal{I}}

\def\A{\mathcal{A}}
\def\B{\mathcal{B}}

\def\N{\mathbb{N}}

\def\Z{\mathbb{Z}}
\def\beq{\begin{equation}}
\def\eeq{\end{equation}}

\def\beq{\begin{equation}}
\def\eeq{\end{equation}}

\def\g{\gamma}

\def\a{\alpha}

\def\p{\Phi}

\def\ep{\epsilon}

\def\l{\lambda}

\def\R{\mathbb{R}}

\def\Z{\mathbb{Z}}

\def\p{\mathcal{P}}

\def\L{\Lambda}

\def\J{\mathcal{J}}

\def\A{\mathcal{A}}
\def\B{\mathcal{B}}

\def\N{\mathbb{N}}

\def\Z{\mathbb{Z}}
\def\I{\mathcal{I}}
\def\beq{\begin{equation}}
\def\eeq{\end{equation}}

\def\beq{\begin{equation}}
\def\eeq{\end{equation}}

\allowdisplaybreaks
\title[Non-resonant Carleson-Radon Transform]{\Large{On a Carleson-Radon Transform}\\\normalsize{The non-resonant setting}}

\author{Martin Hsu and Victor Lie}
\address{Martin Hsu: Department of Mathematics, Purdue University, 150 N. University St, W. Lafayette, IN 47907, U.S.A.}%
\email{hsu263@purdue.edu}

\address{Victor D. Lie: Department of Mathematics, Purdue University, 150 N. University St, W. Lafayette, IN 47907, U.S.A.  and
The ``Simion Stoilow" Institute of Mathematics of the Romanian Academy, Bucharest, RO 70700, P.O. Box 1-764, Romania}%
\email{vlie@purdue.edu}

\date{\today}

\begin{document}
\maketitle

\begin{abstract}
Given a curve $\vec{\g}=(t^{\a_1}, t^{\a_2}, t^{\a_3})$ with $\vec{\a}=(\a_1,\a_2,\a_3)\in \R_{+}^3$, we define the Carleson-Radon transform along $\vec{\g}$ by the formula
 $$ C_{[\vec{\a}]}f\br{x,y}:=\sup_{a\in\R}\left|\emph{p.v.}\,\int_{\R} f\br{x- t^{\a_1},y-t^{\a_2}}\,e^{i\,a\,t^{\a_3}}\,\frac{dt}{t}\right|\,.$$
We show that in the \emph{non-resonant} case, that is, when the coordinates of $\vec{\a}$ are pairwise disjoint, our operator
$ C_{[\vec{\a}]}$ is $L^p$ bounded for any $1<p<\infty$.

Our proof relies on the (Rank I) LGC-methodology introduced in \cite{LVunif} and employs three key elements:
\begin{itemize}
\item a partition of the time-frequency plane with a linearizing effect on both the argument of the input function and on the phase of the kernel;
\item a sparse-uniform dichotomy analysis of the Gabor coefficients associated with the input/output function;
\item a level set analysis of the time-frequency correlation set.
\end{itemize}
\end{abstract}

\tableofcontents

\section{Introduction}

The main focus of our present paper is the study of singular integral operators that combine the following two distinctive features:
\begin{itemize}
\item a \textsf{Radon-type} behavior, \textit{i.e.}, the integration of the input function is performed over a lower dimensional manifold;

\item a \textsf{Carleson-type} behavior, \textit{i.e.}, the defining object may be expressed as a supremum over a family of linear integral operators with (generalized) modulated singular kernel.
\end{itemize}

For more on the origin and motivation of this topic the reader is invited to consult Section \ref{MotivHistory}. For now, in the desire of keeping this presentation concise, we move directly to

\subsection{Statement of the main result}\label{Mainres}

\begin{mthm} \label{mainresult} \textit{Let $\vec{\g}(t)=(\g_1(t), \g_2(t), \g_3(t))$ be a piecewise differentiable curve in $\R^3$ and define \textsf{the Carleson-Radon transform along $\vec{\g}$} as\footnote{As usual, all the singular integral expression throughout the paper are understood in terms of Cauchy principal value and apriori only defined for Schwartz functions.}
\begin{equation}\label{CRalfa}
        C_{[\vec{\g}]}f\br{x,y}:=\sup_{a\in\R}\left|\int_{\R} f\br{x-\g_1(t),y-\g_2(t)}\,e^{i\,a\,\g_3(t)}\,\frac{dt}{t}\right|\,.
\end{equation}
Taking now\footnote{In the case $\a\notin\Z$ one has to properly define the meaning of $t^{\a}$ as either $|t|^{\a}$ or $\sgn{t}\,|t|^{\a}$.} $\vec{\g}(t)=(t^{\a_1}, t^{\a_2}, t^{\a_3})$ with\footnote{Throughout the paper we denote by $\R_+:=\{x\in\R\,|\,x>0\}$.} $\vec{\a}=(\a_1,\a_2,\a_3)\in \R_{+}^3$ a vector consisting of \emph{pairwise distinct} coordinates we have that
the non-resonant\footnote{\textit{I.e.}, the operator under discussion admits no modulation invariance symmetry.} Carleson-Radon transform $C_{[\vec{\g}]}\equiv^{not}C_{[\vec{\a}]}$ obeys
\begin{equation*}
        \nrm{C_{[\vec{\a}]}f}_{L^p}\lesssim_{\vec{\a},p}\nrm{f}_{L^p}\,,\qquad \forall\:1<p<\infty\,.
\end{equation*}}
\end{mthm}

\begin{remark}\label{Gen} i) The above result can be easily extended to more general non-resonant situations that include the case of  curves of the form $\vec{\g}(t)=(t^{\a_1}, t^{\a_2}, t^{\a_3})$ with $\vec{\a}\in \R_{-}^3\cup \R_{+}^3$ pairwise distinct, or, more generally, the case of curves $\vec{\g}(t)=(\g_1(t), \g_2(t), \g_3(t))$ with $\{\g_i(t)\}_{i=1}^3$ continuous (nondegenerate) homogeneous curves with pairwise distinct degrees.

ii) More importantly, beyond the intrinsic nature of the above result, the main contribution of the present paper lies in the versatility and robustness of the methods developed here: indeed, all of the previous results in the Carleson-Radon realm\footnote{For a comprehensive description of the evolution of this topic we invite the reader to consult the following section.} relied on some ad-hoc arguments that are either extremely sensitive to the analytic/algebraic properties of the phase of the associated multiplier or/and involve kernel regularization techniques that can only work in the higher dimensional versions of \eqref{CRalfa}.\footnote{\emph{I.e.}, requiring $n>1$ in \eqref{CRdn}. For more one may see Remark \ref{Gen1}.} In contrast with all these, building on the LGC-methodology introduced in \cite{LVunif}, our approach here involves a wave-packet analysis having a linearizing effect on both the argument of the input function and on the oscillatory phase of the kernel, thus creating a natural environment for the interaction between the local time-frequency properties of the input/output information. This analysis has the advantage of being (i) adaptable to any dimension, (ii) flexible relative to the properties of $\vec{\g}$ and hence to the induced features of the oscillatory phase of the kernel/multiplier, and, (iii) compatible with the linear wave-packet analysis required when $ C_{[\vec{\g}]}$ enjoys a linear modulation invariance structure. All these arguments are supportive of the fact that our present method offers promising perspectives for obtaining further progress on a number of directions addressing the Carleson-Radon behavior---a topic that will be discussed in the following section.
\end{remark}

\subsection{Motivation and historical background; a hierarchical set of problems}\label{MotivHistory}

We start this section with a brief discussion on the origin and motivation of our problem. This will be delivered as an antithetical story following the evolution on our understanding of the themes revolving around the following two fundamental objects\footnote{For more on the parallelism between the origin and evolution of \eqref{Hilb} and \eqref{Hilbparab} as well as on an extended bibliography we invite the reader to consult the detailed discussion in Section 1.4 of \cite{LVunif}.}:
\begin{itemize}
\item \emph{the classical Hilbert transform}
\begin{equation}\label{Hilb}
        H f (x):= \int_{\R} f(x-t)\,\frac{dt}{t}\,.
\end{equation}

Originally, \eqref{Hilb} took shape in the realm of complex analysis, as an operator that connects the limiting boundary behavior of the real and imaginary part of an analytic function on the upper half plane. This view played an essential role in the first proof of the $L^p,\,1<p<\infty$, boundedness of the Hilbert transform, a celebrated result due to M. Riesz in \cite{RieszHilb}. Later, once that the focus switched to the real variable context, \eqref{Hilb} served as the main prototype for the Calder\'on-Zygmund theory, \cite{CZ1}, \cite{CZ2}, with the latter originating in the study of constant coefficient elliptic differential operators.
\medskip

\item \emph{the Hilbert transform along parabola}
\begin{equation}\label{Hilbparab}
        H_{par} f (x,y):= \int_{\R} f(x-t,y-t^2)\,\frac{dt}{t}\,.
\end{equation}

The origin of \eqref{Hilbparab} is of relative more recent date and can be thought as a story in reverse of \eqref{Hilb}. Indeed, the Hilbert transform along parabola, arisen naturally within the study of constant coefficient parabolic differential operators initiated by F. Jones, E. Fabes and M. Rivi\`ere (\cite{Jon64}, \cite{FR66} and \cite{Fabs67}). The $L^2$ boundedness of \eqref{Hilbparab} is a consequence of Parseval, while the general $L^p$ case, $1<p<\infty$, is a result of the works of A. Nagel, M. Rivi\`ere and S. Wainger, \cite{NRW1},\cite{NRW2}, and later, in a more general context, of E. Stein and S. Wainger, \cite{sw1}.
\end{itemize}

Once at this point, we lift ourselves towards a more conceptual view: \eqref{Hilb} and \eqref{Hilbparab} are both commuting with dilation and translation symmetries. This naturally leads us to the next stage in the complexity hierarchy of our analysis---that of considering objects that have modulation invariance features.

For $a\in\R$, we set now the generalized modulation of order $j\in\N$ by
\begin{equation}\label{modj}
       M_{j,a}f(x):= e^{i a x^j}\,f(x)\,,
\end{equation}
and rermark that if we impose the \emph{linear} modulation invariance structure on \eqref{Hilb}, we then recover the celebrated Carleson operator
\begin{equation}\label{Carl}
       C_1 f(x):= \sup_{a\in\R} |M_{1,a} H M_{1,a}^{*} f (x)|\equiv  \int_{\R} f(x-t)\,\frac{e^{i a(x) t}}{t}\,dt\,,
\end{equation}
where in the latter equality we used the equivalent formulation involving an arbitrary real measurable function $a(\cdot)$ obtained via the classical Kolmogorov--Seliverstov--Plessner linearization procedure. The key motivation for considering \eqref{Carl} goes back to the century-old conjecture of N. Luzin, \cite{Luz}, stating that the sequence of partial Fourier sums of a generic square integrable function is almost everywhere convergent. This conjecture was eventually solved in positive by L. Carleson, \cite{c1}, by providing $L^2-$weak bounds on $C_1$.

Mirroring the linear modulation invariance requirement \eqref{Carl} in \eqref{Hilbparab} we notice\footnote{Throughout the paper we use the notation $M^1_{j,a} f(x,y)=e^{i a x^j}\,f(x,y)$ with the obvious analogue for $M^2_{j,a}$.} that given the extra degree of freedom in $\R^2$ as well as the parabolic structure of the $t-$integrant in \eqref{Hilbparab} the operator
\begin{equation}\label{Carlparab}
       CR_{1} f(x,y):= \sup_{a,b\in\R} \Big|M^1_{1,a}M^2_{1,b} H_{par} (M^1_{1,a})^{*} (M^2_{1,b})^{*} f (x,y)\Big|\equiv  \int_{\R} f(x-t, y-t^2)\,
       \frac{e^{i a(x,y) t+ b(x,y)t^2}}{t}\,dt\,,
\end{equation}
where now $ a(\cdot,\cdot)$ and $b(\cdot, \cdot)$ are real measurable functions on $\R^2$. Thus, we immediately deduce the presence of the linear and quadratic modulation invariance
\begin{equation}\label{Carlparab1}
       CR_{1} \left(M^1_{1,c} M^1_{2,d} M^2_{1,e} f\right)(x,y)= CR_{1} f(x,y)\:\:\:\:\:\:\forall c,\,d,\,e\in\R\,.
\end{equation}

This way, we are naturally led to considering maximal operators with higher order modulation invariance and also in general dimensions, such as
\begin{itemize}
\item the Polynomial Carleson operator
\begin{equation}\label{Cdn}
       C_{d,n}f(x):=\sup_{P\in\mathcal{Q}_{d,n}}\left|\int_{\R^n} f(x-t)\,e^{i P(t)}\,K(t)\,dt\right|,\qquad\,x\in \R^n\,,
\end{equation}

\item the Polynomial Carleson-Radon transform (along the paraboloid)
\begin{equation}\label{CRdn}
       CR_{d,n}f(x,y):=\sup_{P\in\mathcal{Q}_{d,n}}\left|\int_{\R^n} f(x-t,y-|t|^2)\,e^{i P(t)}\,K(t)\,dt\right|,\qquad\,(x,y)\in \R^n\times\R\,,
\end{equation}
\end{itemize}
where in the above $d,n\in\N$ with $\mathcal{Q}_{d,n}$ the set of all real-coefficient polynomials in $n$ variables and of degree less than or equal to $d$ and $K$ a Calder\'on--Zygmund kernel on $\R^n$.

Once at this point, it is worth noticing the presence of several hierarchies:
\begin{enumerate}[label=(H\arabic*)]
\item \label{deduct} firstly, it is not hard to see that the $L^p$ boundedness of \eqref{Cdn} follows from the corresponding bound on \eqref{CRdn} via a standard limiting argument;

\item \label{nr} secondly, unsurprisingly, the resonant case is significantly harder to handle than the corresponding non-resonant case with the former standing for the situation when the operator under analysis encapsulates a modulation invariance structure;

\item \label{dim} lastly and more subtly, a careful, comparative look through the development of the first topic above, suggests\footnote{The philosophy stated in \ref{dim} shapes up on the evidence offered by the Carleson setting but is not yet transparent in the Carleson-Radon case. However, building on the previously gained intuition and on the elements stated in Remark \ref{Gen1}, it is likely that this heuristic also applies to the latter case.} that the one-dimensional case $n=1$, guides and inspires the necessary steps for the general $n-$dimensional case, offering at the conceptual level the road map for the general approach.
\end{enumerate}

In direct relation and consistent with the above enumeration we summarize the evolution of each the themes in \eqref{Cdn} and \eqref{CRdn}.

We start our discussion with the Polynomial Carleson operator:

\begin{itemize}

\item \emph{Historically}, the problem concerning the behavior of \eqref{Cdn} evolved as follows:
\begin{itemize}

\item The $d=1$ case in the one dimensional setting  corresponds to the classical Carleson operator \eqref{Carl} discussed above whose $L^2$-bounds where established by Carleson in \cite{c1}, with the general $L^p$, $1<p<\infty$ case treated by R. Hunt in \cite{hu}. Two other influential proofs of Carleson's result were provided by C. Fefferman, \cite{f}, and M. Lacey and C. Thiele, \cite{lt3}. Finally, consistent with \ref{dim}, the $d=1,\,n\geq 1$ case was treated by P. Sj\"olin in \cite{sj2} by adapting to the $n-$dimensional setting the ideas developed in \cite{c1}.

\item The $d>1$ case was formulated by Stein and originated from two distinct directions: the joint work of E. Stein and S. Wainger on oscillatory integrals/operators in Euclidean setting, \cite{SW70}, and, the joint work of F. Ricci and E. Stein on singular oscillatory integrals on nilpotent groups, see \cite{RS86}, \cite{RS87} and \cite{RS89}. Notice that \eqref{Cdn} may be regarded as a natural extension of the classical Carleson operator \eqref{Carl}.
\end{itemize}
\smallskip
\item Consistent with \ref{nr}, the \emph{non-resonant case} addressing the situation when the supremum in \eqref{Cdn} is taken over polynomials with no linear term was proved for $n=1$ and $d=2$ based on a suitable asymptotic analysis of the kernel $\frac{e^{i y^2}}{y}$, \cite{s2},  and then, for general $n$ and $d$, in \cite{sw} relying on Van der Corput estimates and $TT^{*}$-method. More recently, \cite{LVunif}, a more general non-resonant setting dealing with generalized polynomials/fewnomials with no linear term in the $n=1$ case was treated via the newly introduced LGC-methodology.\footnote{For a more extended bibliography treating particular instances of non-resonant Polynomial Carleson type the reader is invited to consult the detailed Introduction in \cite{LVunif}.} Moreover, as a byproduct of the work in \cite{GALVHybcurves}, one can show that the same methodology also applies to the \emph{linear resonant} one-dimensional case of \eqref{Cdn} where this time the supremum is taken over polynomials/fewnomials with linear but no quadratic term. \smallskip

\item The key insight into the \emph{resonant case} was provided in \cite{LVQuadCarl}, for $n=1$ and $d=2$, and, in \cite{LVPolynCarl}, for $n=1$ and general $d$. This fundamental advance relied on three main ideas: (i) a new perspective on the  higher-order wave-packet theory under the name of \emph{relational time-frequency} analysis---a required \footnote{For a revealing, antithetical discussion on the necessity of this relational time-frequency perspective in the context of higher order modulation invariance as opposed to the standard wave-packet theory applied in the classical (linear) modulation invariance setting, the reader is invited to consult Section 2 in \cite{GALVHybcurves}.} framework for time-frequency problems that involve generalized modulation invariance features, and inspired by the view offered in \cite{Feucp}, (ii) a specialized tile selection algorithm addressing the boundary effect of the time-frequency discretization thus allowing for an exact\footnote{\emph{I.e.}, having an actual equality and not a mere discretized model as was the case with the previous approaches to the Carleson operator.} time frequency decomposition of \eqref{Cdn}---this
    discards the previously employed grid averaging procedures that are not reliable in the relational time-frequency framework due to the poor frequency localization properties of the generalized wave-packets, and, (iii) a novel \emph{local analysis} playing a central role in the time-frequency tile discretization of \eqref{Cdn} and in the concept of a mass of a tile adapted to a spatial region---this allows for the removal of the previously required exceptional set analysis that was directly responsible for global weak-type bounds and hence for the necessity of appealing to interpolation theory.

    Following the philosophy stated in \ref{dim}, one can build on the above ideas in order to obtain various extensions for general $n$ and $d$: see \cite{LVHidimPolyCarl} for the standard translation invariant Calder\'on-Zygmund kernel case, \cite{ZK} for general H\"older continuous Calder\'on–Zygmund kernels and, more recently, \cite{BDJST} for generalized Carleson operators on doubling metric measure spaces.
\end{itemize}

We now move our focus on the Polynomial Carleson-Radon transform:

\begin{itemize}
\smallskip
\item \emph{Historically}, this topic was proposed by L. Pierce and P. Yung in \cite{PY19} in direct relation with the works of Stein and Wainger, \cite{sw}, and Stein and Ricci, \cite{RS87}, and as a natural Radon analogue of the Polynomial Carleson operator discussed above. Taking in account \ref{deduct}, it is not surprising that the progress made in the Carleson-Radon context was moderate and, until very recently, relying on some quite restrictive extra hypothesis. Indeed, in direct relation with the discussion at \ref{nr}, and given the presence of supplementary difficulties in the one dimensional setting that will be addressed in Remark \ref{Gen1} below, our knowledge on this topic developed as follows:
\smallskip
\item The \emph{non-resonant case} evolved first in the $n>1$ setting, \cite{PY19}, and imposed that the supremum in \eqref{CRdn} be taken over a restricted class of polynomials with no linear terms, no quadratic resonance terms and having a uniform homogeneous degree behavior.\footnote{For a precise description of the restrictive hypothesis we invite the reader to consult \cite{PY19}.} In particular, this hypothesis places the operator under analysis in the purely non-zero curvature setting, that is, in the absence of a modulation invariance structure. This elaborate work was later extended in \cite{AMPY24} to the case when the $t$-integration is performed along the manifold $\gamma(t)=(t, Q(t)),\, t\in \R^n$ with $Q(t)=t^{T}\,Q\,t$, $Q\in M_n(\R)$, a generic non-degenerate quadratic form\footnote{Notice that the work in \cite{PY19} addresses precisely the special case $Q(t)=|t|^2$.} but under similar restrictions on the supremum with the ones above. Both these contributions employ kernel regularization, successive application of the $TT^{*}-$method and a refinement of the techniques developed by Stein and Wainger in \cite{sw} for treating the non-resonant Polynomial Carleson operator.

    The one-dimensional correspondent was until very recently widely open with only very particular toy models---confined to one-variable dependence of the linearizing supremum function applied to monomial phases---being addressed in \cite{GPRY17}.
\medskip

\item The \emph{resonant case} has only been attempted in the $n>1$ setting\footnote{We exclude here some simple one-dimensional toy models that use as a black-box the (Polynomial) Carleson operator.}, \cite{Bcarlrad}, again under restrictions on both the structure of the linearizing function(s) and on the set of admissible polynomials. This is the first relevant work in the Carleson-Radon context which requires modulation invariant techniques; it employs a kernel regularization argument together with methods that are inspired from the previous contributions made in the resonant Polynomial Carleson case. For some other, earlier related model problems one may consult \cite{R19} and \cite{B24}.
\end{itemize}

\begin{remark}\label{Gen1}[\textsf{The Carleson-Radon realm: $n=1$ versus $n>1$ case}] $\:\:\:$In what follows, we motivate on two levels why the singular behavior of the Carleson-Radon transform is strictly harder in the $n=1$ case than in the $n>1$ case:
\begin{itemize}
\item[-] at the heuristic level, a higher value of $n$ creates more ``room" for the cancellations induced by the phase of the kernel to manifest, and hence, the ``bad set" where the value of $CR_{d,n}f$ is ``large" has a smaller relative size;\footnote{This heuristic was offered in Section 1.3.4 of the longer, 2021 arxiv version of \cite{LVBilHilbCarl} which can be found at https://arxiv.org/abs/2106.09697. In the same section, the question regarding the boundedness properties of $C_{[\vec{\g}]}$ in the specific monomial curved case $\vec{\g}(t)=(t,t^2,t^3)$ is explicitly raised.}

\item[-] at the factual level, a manifestation of the above heuristic can be visualized in the following dichotomy: the $T T^{*}$ kernel regularization effect that is present in the higher dimensional case---and crucially exploited in all of the above works addressing $CR_{d,n}$ with $n>1$, breaks in the one dimensional setting. Concretely, the integral kernel of the operator $CR_{d,n}CR_{d,n}^{*}$ may be expressed as an integral expression over an $n-1$ dimensional sub-manifold inside $\R^{2n}$, which, in the $n=1$ case presents no smoothing effect.\footnote{This observation has also been noticed in \cite{BGH24}.}
\end{itemize}
\end{remark}

With this overall historical perspective in place, we are now ready to focus on the most recent developments: While the results of the present paper---intended as a first general\footnote{\emph{I.e.}, with no restrictions on the structure of the linearizing function.} treatment of the non-resonant one dimensional monomial case of \eqref{CRdn}---were finalized and in the process of being written, another work, \cite{BGH24}, addressing the particular case $(\a_1,\a_2,\a_3)=(1,2,3)$ of our Main Theorem \ref{mainresult}, was published online. In this latter, surprising work, the authors therein treat the moment curve case via completely different methods, relying on (small cap) decoupling theory, and, in a key fashion, on a complex companion paper, \cite{CGGHIW24}. This latter paper focusses on the study of variable coefficient Schr\"odinger-type operators and involves local smoothing estimates, polynomial partitioning and level set analysis relying in turn fundamentally on works such as \cite{Bo91}, \cite{W05}, \cite{BoGu11}, \cite{Gu16}, \cite{Gu18}, and \cite{KR18}. Now in order for all of the above machinery to be successful, in \cite{BGH24} one has to verify that the phase of the multiplier corresponding to the main operator obeys, besides some suitable H\"ormander-type conditions, also a so-called Nikodym non-compression hypothesis which seems to be quite sensitive to the algebraic/differential properties of the curve $\vec{\g}(t)=(t^{\a_1}, t^{\a_2}, t^{\a_3})$ thus explaining in part why the treatment therein is confined to the moment curve case $(\a_1,\a_2,\a_3)=(1,2,3)$. Also, in the same paper, the authors inquire on the possibility of obtaining a direct\footnote{\emph{I.e.}, not appealing to real interpolation.} $L^2$-smoothing inequality.\footnote{See Section 1.4. in \cite{BGH24}.}

In this short\footnote{The main ingredient, \emph{i.e.}, the control on the high-frequency stationary component is covered completely in Section \ref{mainsect}.} and self-contained paper we obtain a general treatment of the non-resonant one dimensional monomial case--see Main Theorem \ref{mainresult}--in particular solving both issues raised in the above paragraph. More importantly, as already pointed in Remark \ref{Gen}, the ideas developed here are likely to extend to a significantly more general setting motivating us to end this section by laying out the following:
\medskip

\noindent\textsf{A hierarchical set of problems addressing the general Carleson-Radon theme}
$\newline$

In direct relation to \ref{nr}--\ref{dim} and Remark \eqref{Gen1}---which, in particular invite us to a subdivision according to $n=1$ and $n\geq 2$---we propose here, in order of complexity, a natural hierarchy of steps having as a final aim the global understanding of \eqref{CRdn}:
 \begin{itemize}
 \item The \emph{non-resonant} case: In view of the present work, using the more general formulation within Main Theorem \ref{mainresult}, the natural remaining step in the one-dimensional setting is given by the choice $\vec{\g}(t)=(t^{\a_1}, t^{\a_2}, \sum_{j=3}^d a_j t^{\a_j})$, $d\geq 3$, where here we require $\{\a_j\}_{j=1}^d$ pairwise distinct and the supremum in \eqref{CRalfa} be taken over all the coefficients $\{a_j\}_{j=3}^d\subset \R$. The higher dimensional ($n\geq 2$) analogue formulation (with the obvious modifications) is also still open in view of the uniform homogeneous degree restrictive hypothesis in \cite{PY19}, \cite{AMPY24} discussed earlier.

     We believe that both of the above problems are amenable to an approach inspired from the methods developed in the present paper and those in \cite{LVunif}.
\medskip

 \item The \emph{resonant} case: In this situation, choosing to only focus on the one-dimensional case, and, appealing again to the framework in our main result, we list\footnote{This list was mentioned in Section 1.3.4 of the extended, arxiv version of \cite{LVBilHilbCarl} mentioned earlier.} the following natural hierarchy of problems:
 \begin{itemize}
 \item  firstly, is the case $\vec{\g}(t)=(t, t^{2}, t^{2})$ that admits the linear resonance $C_{[(1,2,2)]}(M_{1,a}^2 f)=C_{[(1,2,2)]}(f)$ for any $a\in\R$;

 \item secondly, is the case  $\vec{\g}(t)=(t, t^{2}, t)$ that admits both the linear resonance $C_{[(1,2,1)]}(M_{1,a}^1 f)=C_{[(1,2,1)]}(f)$ for any $a\in\R$ and the joint linear/quadratic resonance $C_{[(1,2,1)]}(M_{2,b}^1 M_{1,b}^2 f)=C_{[(1,2,1)]}(f)$ for any $b\in\R$;

 \item thirdly, is of course the general case $\vec{\g}(t)=(t^{\a_1}, t^{\a_2}, \sum_{j=3}^d a_j t^{\a_j})$, $d\geq 3$, with no restrictions on the set of exponents $\{\a_j\}_{j=1}^d$ thus allowing generalized/higher order resonances.
 \end{itemize}
All the above problems would obviously require (generalized) modulation-invariance techniques, which, while expected to build on the methods in \cite{c1}, \cite{f}, \cite{lt3} and finally \cite{LVPolynCarl}, are likely to require some genuine new ideas.
\end{itemize}

\subsection{Plan of the proof; main ideas}

For simplicity and expository reasons, most of this paper will focus on the treatment of the moment curve case, \textit{i.e.} $\vec{\a}=(1,2,3)$, leaving only for the very end---see Final Remarks, Section \ref{genalfa}---the mild adaptations required for the general non-resonant $C_{[\vec{\a}]}$ case, thus, reinforcing the versatility of our methods.

For convenience and later reference, setting for notational simplicity $C:=C_{[(1,2,3)]}$, we rephrase our Main Theorem \ref{mainresult} in this particular setting:

\begin{theorem}\label{thm_main} With the previous notations, we have
    \begin{equation*}
        \nrm{Cf}_{L^p}\underset{p}{\lesssim}\nrm{f}_{L^p},\quad\forall\: p\in\br{1,\infty}.
    \end{equation*}
  \end{theorem}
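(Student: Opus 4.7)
The first step is to linearize the supremum by invoking Kolmogorov--Seliverstov--Plessner: choose a real measurable function $a:\R^2\to\R$ and work with the (linear) operator
\[
Tf(x,y)=\text{p.v.}\int_{\R} f(x-t,y-t^2)\,e^{i\,a(x,y)\,t^3}\,\frac{dt}{t}.
\]
One then performs a dyadic decomposition in the kernel, $\tfrac{1}{t}=\sum_{k\in\Z}\rho_k(t)$ with $\supp\rho_k\subset\{|t|\sim 2^k\}$, obtaining $T=\sum_k T_k$. By the anisotropic scaling adapted to the curve $(t,t^2)$ (and after renormalizing the kernel phase $a t^3\mapsto \widetilde a\, s^3$ with $\widetilde a=2^{3k}a$), it suffices to control a single scale $k=0$ uniformly in $a$, together with a summability statement in $k$. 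The summability in $k$ will come from the discrepancy between the three distinct exponents $(1,2,3)$, which guarantees no scale is frequency-resonant with another.

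The heart of the argument is an LGC-style wave-packet decomposition, applied simultaneously to the input $f$ and to the modulated kernel, so as to linearize the phase $\Phi(t)=-\xi t-\eta t^2+\widetilde{a}t^3$ on both factors. Concretely, I expand
\[
\widehat{f}(\xi,\eta)=\sum_{\vec\omega\in\mathcal G}\ang{f,\v_{\vec\omega}}\,\widehat{\v_{\vec\omega}}(\xi,\eta),
\]
where $\{\v_{\vec\omega}\}$ is a Gabor-type system attached to unit-scale Heisenberg boxes $\vec\omega$ in the frequency plane $(\xi,\eta)$. Inserting this into $T_0f$, one recognizes the remaining $t$-integral as an oscillatory integral with phase $\Phi$ linearized over each $\vec\omega$. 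Stationary-phase analysis on $\partial_t\Phi=-\xi-2\eta t+3\widetilde{a}\,t^2=0$ produces two regimes: if the critical $t_\ast(\xi,\eta,\widetilde{a})$ lies outside $|t|\sim 1$, repeated integration by parts gives arbitrarily fast decay in the relevant parameters and that contribution is easily summable; if $t_\ast\in\{|t|\sim 1\}$, the contribution is essentially a constant times an output wave-packet $\psi_{\vec{\omega}^{\,\sharp}}(x,y)$ re-centered by the stationary map. This yields a bilinear form
\[
T_0 f(x,y)\;\approx\;\sum_{\vec\omega}\ang{f,\v_{\vec\omega}}\,\psi_{\vec{\omega}^{\,\sharp}(x,y)}(x,y)\,\mathbf{1}_{E_{\vec\omega}(a)}(x,y),
\]
where $E_{\vec\omega}(a)$ is the \emph{time-frequency correlation set} of points $(x,y)$ whose linearizing value $a(x,y)$ places the stationary tile at $\vec\omega$.

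With this model in hand, I would run a sparse/uniform dichotomy on the input Gabor coefficients $\{\ang{f,\v_{\vec\omega}}\}$ and symmetrically on the dual output coefficients: on the \emph{uniform} side the absence of concentration gives an $L^2$ Bessel-type bound via tile almost-orthogonality, while on the \emph{sparse} side one exploits the fact that only few tiles carry the mass of $f$, giving an $L^p$ bound by a direct summation over nested spatial regions. Interpolation between the $L^2$ estimate and the $L^1\to L^{1,\infty}$ / restricted weak-type $L^\infty$ estimates then delivers the full range $1<p<\infty$. The crucial quantitative input is a level-set bound for the correlation set $E_{\vec\omega}(a)$: since the stationary map $(\xi,\eta,\widetilde{a})\mapsto t_\ast$ has a non-degenerate Jacobian precisely because the exponents $(1,2,3)$ are pairwise distinct (the non-resonance hypothesis), one expects $|\{(x,y):(\vec\omega,a(x,y))\text{ is stationary}\}|$ to obey a sharp count in terms of $|\vec\omega|$.

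\textbf{Main obstacle.} The delicate point is not the individual-scale analysis, which is a clean stationary-phase computation, but the \emph{global} control of the high-frequency stationary regime---namely, handling the interaction between the spatial localization of the output wave-packets $\psi_{\vec{\omega}^{\,\sharp}(x,y)}$ and the measurable linearizing function $a(x,y)$ through the correlation sets $E_{\vec\omega}(a)$. One must avoid a brutal union bound over $a$ (which would be ruinous) and instead exploit the level-set structure: for each $\lambda>0$, the set $\{|Cf|>\lambda\}$ must be covered by a \emph{sparse} collection of Heisenberg boxes whose total measure is controlled by the $L^p$-mass of $f$. This is where the non-resonance of $(\a_1,\a_2,\a_3)$ becomes indispensable and where the three ingredients advertised in the abstract---linearizing tile partition, sparse/uniform dichotomy, and correlation-set level-set analysis---must be tightly interwoven to close the estimate without any passage through real interpolation at the $L^2$ endpoint.
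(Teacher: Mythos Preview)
Your plan hits the right buzzwords (LGC, Gabor frames, sparse/uniform dichotomy, level-set analysis of a correlation set), but several load-bearing steps are either misidentified or missing.

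First, the reduction to a single scale is not organized correctly. The paper does not sum in the spatial scale $k$ via ``the discrepancy between the three distinct exponents''; the scales $k$ are absorbed by anisotropic dilation and Littlewood--Paley almost-orthogonality. The genuine summation parameter is a \emph{frequency} multi-index $\vec{j}\in\N^3$ recording how far $(\xi,\eta,a)$ sits from the unit shell, and the whole point of Theorem~\ref{thm_main_loc} is a single-scale $L^2$ \emph{smoothing} bound with decay $\lambda^{-\sigma}$ in $\lambda\sim 2^{|\vec j|_\infty}$. Before ever reaching the LGC analysis, one must split off the low-frequency piece $C_L$ (controlled pointwise by $H^\ast_{\para}+M_{\para}$) and the non-stationary high-frequency piece $C_{H,NS}$ (integration by parts gives $2^{-N|\vec j|_\infty}M_{\str}$); your outline omits both and these are what makes the range $1<p<\infty$ accessible, since the stationary piece $C_{\vec j}$ is interpolated between a trivial pointwise bound $|C_{\vec j}f|\lesssim M_{\para}M_{\str}f$ and the $L^{2,\infty}$ decay coming from Theorem~\ref{thm_main_loc}. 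There is no $L^1\to L^{1,\infty}$ endpoint in the paper, and your suggestion to prove one is unsubstantiated.

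Second, inside the single-scale analysis your description is off in two places. The Gabor system is not at unit scale but at the \emph{linearizing} scale $(\sqrt{\lambda})^{-1}$, dictated by the cubic phase; this is what produces the weight $\langle 3w r^2/\lambda-2vr/\sqrt{\lambda}-u\rangle^{-N}$ and the shifted coefficient $F_{p-r,q-r^2/\sqrt{\lambda},u,v}$ that encode the curvature. And the sparse/uniform dichotomy is not ``Bessel on the uniform side, trivial summation on the sparse side'': the paper runs a \emph{three}-case split $\Lambda_{SU}+\Lambda_{UU}+\Lambda_{SS}$, where the uniform--uniform case is the most delicate (requiring a $TT^\ast$-type argument on the $G$-coefficients plus a second Cauchy--Schwarz exploiting the weight), and the sparse--sparse case is reduced, after freezing $(u,v,w)$ by maximality, to a purely spatial bilinear form controlled by the $L^p$-improving of the parabolic averaging operator together with Lemma~\ref{lem_tf_cor_lev_set_ana}. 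That lemma---the level-set analysis---is not a count of $\{(x,y):\text{stationary}\}$ but an $L^1$ decay estimate for $\langle w(x+t,y+t^2)t^2+v(x,y)t+u(x,y)\rangle^{-N}$, proved by squaring/cubing to decouple the rough linearizing functions from a smooth variable, reducing to a degree-$5$ polynomial in that variable, and applying Van der Corput after verifying a determinant is generically nonzero. Your ``non-degenerate Jacobian because the exponents are distinct'' is the right moral, but the actual mechanism is this algebraic computation, and without it the sparse--sparse case does not close.
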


Next, a rather standard (non)stationary phase analysis---see Section \ref{Discr}, leads to a natural discretization of the form $C=C_L\,+\,C_{H,NS}\,+\,C_{H,S}$ where
\begin{itemize}
\item $C_L$ stands for the low-frequency component---treated in Section \ref{LF};

\item $C_{H,NS}$ stands for the high-frequency non-stationary component---handled in Section \ref{HNS};

\item $C_{H,S}$  stands for the high-frequency stationary component---this represents the main term in the above decomposition whose treatment is initiated in the remainder of Section \ref{Genconsid} and completed in Section \ref{mainsect}.
\end{itemize}

Indeed, focusing now on the last item above, in Section \ref{HS1}, we exploit the dilation symmetry and further reduce the analysis of $C_{H,S}$ to that of the unit scale operator defined by
\begin{equation}\label{unitsc}
        \mathcal{C}^{\br{a}}f\br{x,y}:=
        \int
            f\br{x-t,y-t^2}
            e\br{a\br{x,y}t^3}
            \rho\br{t}
        dt.
    \end{equation}
where here \(a:\R^2\to\R\) is some arbitrary measurable function and \(\rho\in C^\infty_c\br{\R}\), $\rho\geq 0$ with $\textrm{supp}\,\rho\subseteq (\frac{1}{2},2)$.

Once at this point, Theorem \ref{thm_main} is, via some standard interpolation techniques, a consequence of
\medskip
\begin{theorem}\label{thm_main_loc}
There is a universal constant \(\sigma >0\) such that for any parameter \(\lambda>0\), function \(f\in L^2\br{\R^2}\), measurable set \(E\subset\R^2\) and measurable function \(a:\R^2\to \R\) satisfying the property
\begin{equation}\label{eq_f_a_E_cond}
    \supp \widehat{f}\times a\br{E}\subset
    \Br{-2\lambda,2\lambda}^3 \setminus \Br{-\lambda,\lambda}^3,
\end{equation}
we have the following decay estimate:
    \begin{equation}\label{gl}
        \abs{
            \ang{\mathcal{C}^{\br{a}}f,\1_E}
        }
        \lesssim
        \lambda^{-\sigma}
        \nrm{f}_{L^2}\abs{E}^{\frac{1}{2}}.
    \end{equation}
\end{theorem}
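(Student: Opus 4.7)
The plan is to implement the Rank-I LGC methodology of \cite{LVunif}. After Fourier inversion, the form reduces to a double integral over $(x,y)\in E$ and $\xi=(\xi_1,\xi_2)\in\supp\widehat{f}$ with multiplier
\begin{equation*}
m(x,y,\xi_1,\xi_2)=\int e^{i\phi(t)}\rho(t)\,dt,\qquad \phi(t)=-\xi_1 t-\xi_2 t^2+a(x,y)t^3.
\end{equation*}
Under the hypothesis \eqref{eq_f_a_E_cond}, at least one of $|\xi_1|,|\xi_2|,|a(x,y)|$ is of order $\lambda$; after a standard case split it suffices to treat the generic regime where all three are comparable to $\lambda$. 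There $\phi'''\equiv 6a(x,y)\sim\lambda$, so $\phi$ has at most two nondegenerate critical points, and these fall inside $\supp\rho$ only on a codimension-one ``correlation manifold'' $\mathcal{M}$ in the parameter space, cut out by the discriminant and range conditions of the quadratic $\phi'$. Off $\mathcal{M}$, integration by parts yields rapid decay; on $\mathcal{M}$, stationary phase gives $|m|\lesssim\lambda^{-1/2}$.

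Next I would convert this pointwise gain into an $L^2$ gain in the presence of the Carleson supremum. Gabor-expand $f$ against a wave-packet frame $\{\phi_P\}$ at Heisenberg scale $\lambda^{-\delta}\times\lambda^\delta$ for a small $\delta\in(0,1/2)$ to be tuned at the end, writing $f=\sum_P a_P\phi_P$. Within each tile $P=I_P\times\omega_P$, the translate $f(x-t,y-t^2)$ becomes---up to controllable tail errors---a pure modulation in $t$ with frequencies prescribed by the center of $\omega_P$; this is the announced linearization of the argument. A matching treatment of the output (via a Gabor partition in $(x,y)$ combined with a level-set decomposition of $a|_E$) linearizes the cubic phase of the kernel on each cell, producing the bilinear representation
\begin{equation*}
\ang{\mathcal{C}^{(a)}f,\1_E}=\sum_{P,Q}c_{P,Q}\,\overline{b_Q}\,a_P,
\end{equation*}
with $c_{P,Q}$ of size $\lesssim\lambda^{-1/2+O(\delta)}$ on the discretized correlation set and negligible off it.

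To extract the $\lambda^{-\sigma}$ gain I would deploy the sparse-uniform dichotomy. For a threshold $\mu$, split $a_P=a_P^{\text{sp}}+a_P^{\text{unif}}$, where the sparse piece lives on $\{P:|a_P|>\mu\}$ (a set of cardinality $\lesssim\mu^{-2}\nrm{f}_{L^2}^2$ by Bessel and Chebyshev) and $\nrm{a^{\text{unif}}}_\infty\le\mu$; similarly for $\{b_Q\}$. The sparse contribution is controlled by a direct count on the correlation set, where slicing $E$ by dyadic level sets of $a$ and exploiting the transversality of $\mathcal{M}$ converts the codimension one of $\mathcal{M}$ into a quantitative Fubini gain. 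The uniform contribution is handled by $TT^*$: the composition of the wave-packet blocks produces a double oscillatory integral whose phase inherits the non-resonance of the exponents $(1,2,3)$, i.e.\ the linear independence of $\{1,2t,3t^2\}$ on $\supp\rho$, and non-stationary phase yields a Schur-type bound $\lesssim\mu\cdot\lambda^{-\eta}$ for some $\eta>0$. Optimizing $\mu$ against $\delta$ produces a positive universal exponent $\sigma$.

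The chief obstacle is the correlation/level-set analysis combined with the $TT^*$ step. In contrast to the higher-dimensional setting (Remark \ref{Gen1}), here the $TT^*$ kernel enjoys no intrinsic smoothing, so all oscillatory cancellation must be harvested purely from the non-resonance of the exponents---specifically, the non-vanishing of appropriate mixed derivatives of $\phi$ viewed as a function of $(\xi_1,\xi_2,a)$. Carefully quantifying the transversality of $\mathcal{M}$ to the fibers of $a$, and balancing the wave-packet parameter $\delta$, the sparsity threshold $\mu$, and the level-set cardinality to produce a $\sigma>0$ independent of $\lambda,f,E,a$, is where the technical weight of the argument will concentrate.
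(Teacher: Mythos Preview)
Your high-level strategy aligns with the paper's---wave-packet linearization, sparse--uniform dichotomy, level-set analysis---but two concrete steps have genuine gaps.

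First, the wave-packet scale is not a free parameter: the linearizing scale is forced to be exactly $\lambda^{-1/2}$, since the phase $at^3-\xi_2 t^2-\xi_1 t$ with coefficients $\sim\lambda$ has curvature $\sim\lambda$. At any $\delta<1/2$ the cubic phase is \emph{not} linearized on each tile, so the bilinear representation $\sum c_{P,Q}\overline{b_Q}a_P$ with controlled $c_{P,Q}$ does not hold. Second, and more seriously, your plan to handle the uniform piece by $TT^*$ is precisely what fails in this one-dimensional setting---as you yourself note, the $TT^*$ kernel has no smoothing. The paper does \emph{not} run $TT^*$ on $\mathcal{C}^{(a)}$ for the uniform--uniform component; instead, after Cauchy--Schwarz and a layer-cake/projection argument on the $G$-coefficients, the decay is extracted from the additional $r$-average built into the model form \eqref{linwavemodel}, a structure that only emerges at the scale $\lambda^{-1/2}$ and has no analogue in your outline. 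Likewise, the sparse--sparse case is not resolved by ``transversality of $\mathcal{M}$'' plus Fubini: three rough measurable functions (encoding $\xi_1,\xi_2,a$) must be eliminated simultaneously, and the paper's Lemma~\ref{lem_tf_cor_lev_set_ana} does this by first squaring (to remove $w$), then \emph{cubing} in the remaining spatial variable (to remove $u,v$), reducing to a Van der Corput sublevel estimate for an explicit polynomial whose non-degeneracy is verified by a determinant computation. None of these mechanisms---the $r$-average exploitation, the square-then-cube elimination scheme, or the determinant check---appear in your sketch, and without them the argument does not close.
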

\medskip
This is the central piece of the entire paper. The proof of Theorem \ref{thm_main_loc}---covered in full in Section \ref{mainsect}---involves the three main steps\footnote{For a more detailed discussion please see the introduction in Section \ref{mainsect}.} pertaining to the  LGC-methodology: \emph{(i)} a partition of the time-frequency plane with a linearizing effect on both the argument of the input function and on the phase of the kernel, \emph{(ii)} a Gabor (local Fourier) decomposition of the input, and, \emph{(iii)} a time-frequency correlation analysis.

Applying a duality argument, and as a consequence of \emph{(i)} and \emph{(ii)}, in Section \ref{linsec} we rephrase the main goal \eqref{gl} in Theorem \ref{thm_main_loc} as follows
\begin{equation}\label{keyIntr}
    \Lambda\br{F,G}\lesssim\lambda^{-\sigma}\nrm{f}_{L^2}\abs{E}^{\frac{1}{2}}\,,
\end{equation}
where\footnote{Given a finite index set \(I\), we define the expectation \(\displaystyle{\mathop{\mathbb{E}}_{i\in I}a_i:=\br{\# I}^{-1}\sum_{i\in I}a_i}\).}
\begin{equation}\label{linwavemodelIntr}
    \Lambda\br{F,G}:=
        \mathop{\mathbb{E}}_{
        \frac{\sqrt{\lambda}}{3}
        \leq r\leq 3
        \sqrt{\lambda}
        }
    \sum_{\substack{p,q\\
        u,v,w
        }}
        \frac{
            F_{p-r,q-\frac{r^2}{\sqrt{\lambda}},u,v}
            G_{p,q,r,u,v,w}
        }{
            \ang{
                3w\frac{r^2}{\lambda}
                -2v\frac{r}{\sqrt{\lambda}}
                -u
            }^N
        }
\end{equation}
with  \(F,G\) representing a collection of Gabor coefficients at the linearizing scale $(\sqrt{\l})^{-1}$ given by
\begin{equation}\label{GabcoefIntr}
    F_{p,q,u,v}:=\abs{
        \ang{f,\Phi_{p,q,u,v}}
    },\quad
    G_{p,q,r,u,v,w}:=
    \abs{
        \ang{
            \overline{e\br{\frac{ar^3}{\lambda^{\frac{3}{2}}}}}\chi\br{\frac{a}{\sqrt{\lambda}}-w}\1_E,
            \Psi_{p,q,u,v}
        }
    }\,,
\end{equation}
and all the involved parameters being (positive) integers of size $\lesssim \sqrt{\lambda}$.

With these settled, we are now ready to state the \emph{philosophy} behind our approach: the curvature feature of the operator $C$ is encapsulated via the linearizing wave-packet decomposition into suitable time-frequency dependencies that can be now visualized in \eqref{linwavemodelIntr} in the \emph{non-linear entanglement} between the physical parameters \(p,q,r\) and the frequency parameters \(u,v,w\) as revealed by
\begin{itemize}
\item the structure of the Gabor coefficient
\begin{equation}\label{Gcoefcurv}
F_{p-r,q-\frac{r^2}{\sqrt{\lambda}},u,v}\,;
\end{equation}

\item the structure of the weight function
\begin{equation}\label{weightcurv}
 \ang{3w\frac{r^2}{\lambda}-2v\frac{r}{\sqrt{\lambda}}-u}^{-N}\,.
 \end{equation}
\end{itemize}

Understanding this entanglement is the key in the application of the LGC methodology. The examination of these entanglements is naturally leading to the size-analysis of the Gabor coefficients in \eqref{GabcoefIntr} that is further expressed as a \emph{sparse-uniform dichotomy}. The latter is a consequence of the following heuristic pertaining to the two possible extreme scenarios:
\begin{itemize}
\item the large-size Gabor coefficient scenario imposes the existence of only few such coefficients and hence this situation is referred to as the \emph{sparse} case;

\item the small-size Gabor coefficient scenario is controlled by the equally distributed/expected size Gabor coefficients scenario which is further referred to as the \emph{uniform} case.
\end{itemize}

In both of the above scenarios the presence of the weight function \eqref{weightcurv} is critical. In contrast with the latter, the structure of the Gabor coefficient \eqref{Gcoefcurv} can only be exploited in the sparse case scenario via the so called \emph{time-frequency correlation set} whose understanding requires a level set analysis.

Thus, in a nutshell, the two main ingredients in the resolution of our key estimate \eqref{keyIntr} will be given by
\begin{itemize}
\item a sparse-uniform dichotomy analysis regarding the relative size/cardinality of the Gabor coefficients \eqref{GabcoefIntr} that is performed in Section \ref{SpUnifDich};

\item a level set analysis concerning the time-frequency correlation set that is delivered in Section \ref{LevSet}.
\end{itemize}

\subsection*{Acknowledgments}
The first author received partial summer and travel support from the NSF grant DMS-2400238. The second author was supported by the NSF grant DMS-2400238 and by the Simons Travel grant MPS-TSM-00008072.

\section{Some notation and basic facts}\label{Not}

In what follows we will use \(e\br{z}:=e^{2\pi i z}\) for the standard character over $\R$ and $\ang{\cdot}$ for the Japanese bracket defined by $\ang{x}:=\br{1+\abs{x}^2}^{\frac{1}{2}},\quad x\in\R^n$.

Next,  for \(z,x,\xi\in\R\), \(\lambda,p >0\) and \(g\) a function on \(\R\) we introduce the standard translation, dilation and modulation operators:
    \begin{equation}\label{symm}
        \Tr_x g\br{z}:=g\br{z-x},\quad
        \Dil^p_\lambda g\br{z}:= \lambda^{-\frac{1}{p}}g\br{\frac{z}{\lambda}},\quad
        \Mod_\xi g\br{z}:=e\br{\xi z}g\br{z}.
    \end{equation}

We will also make frequent use of the Littlewood-Payley projections defined as follows:
for \(k\in\Z\), \(j\in\N\), and \(g\) a Schwartz function on \(\R\), we set
    \begin{equation}\label{projLP}
        \widehat{P_{k,j}g}:=
        \phi_{k,j}\widehat{g}
        \quad\textrm{with}\quad
        \phi_{k,j}:=
        \left\{
        \begin{aligned}
            \Dil^\infty_{2^k}\varphi, & \quad j=0\\
            \Dil^\infty_{2^{k+j}}\psi, & \quad j\in\N
        \end{aligned}
        \right.\,,
    \end{equation}
where in the above \(\varphi\in C^\infty_c\br{\R}\) is a smooth bump function that is even and satisfies \(\1_{\Br{-1,1}}\leq \varphi\leq \1_{\br{-2,2}}\) while \(\psi:=\varphi-\Dil^\infty_{1/2}\varphi\).

Finally, our reasonings will involve the following classical operators:
\begin{itemize}
\item the strong maximal operator
    \begin{equation*}
        M_\str f\br{x,y}:=
        \sup_{R,R'>0}
            \fint_{-R}^R
                \fint_{-R'}^{R'}
                \abs{f}\br{x-s,y-t}
            dsdt\,;
    \end{equation*}
\item the maximal operator along parabola
    \begin{equation*}
        M_\para f\br{x,y}:=
        \sup_{R>0}
            \fint_{-R}^R
                \abs{f}\br{x-t,y-t^2}
            dt\,;
    \end{equation*}

\item the maximally truncated Hilbert transform along parabola
    \begin{equation*}
        H^\ast_\para f\br{x,y}:=
        \sup_{R>r>0}
            \abs{
                \int_{r<\abs{t}\leq R}
                    f\br{x-t,y-t^2}
                \frac{dt}{t}
            }\,.
    \end{equation*}
\end{itemize}

Classical, well-known results state that all of the above three operators \(M_\str, M_\para, H^\ast_\para\) are of strong type \(\br{p,p}\) for every \(p\in\br{1,\infty}\).

\section{The decomposition $C=C_L\,+\,C_{H,NS}\,+\,C_{H,S}$. Reduction to Theorem \ref{thm_main_loc}}\label{Genconsid}

In this section, we cover the following steps:
\begin{itemize}
\item decomposition of our initial operator into three components: low frequency, high frequency non-stationary and high frequency stationary components;

\item treatment of the low frequency and  high frequency  non-stationary components;

\item provide general $L^p$ bounds, $1<p<\infty$, for the  high frequency stationary component  \emph{assuming} its single scale $L^2$-decay control.
\end{itemize}

Thus, in a nutshell, in this section we prove Theorem \ref{thm_main} assuming Theorem \ref{thm_main_loc}.

\subsection{Discretization of $C$}\label{Discr}

Recalling \eqref{projLP}, we start by introducing the following definitions\footnote{Unless otherwise stated, throughout this paper we work with the French school convention: the set of natural numbers contains the origin, \textit{i.e}, $\N:=\{0,1,2\ldots\}$.}: for $k\in\Z$ and $\vec{j}=(j_1,j_2,j_3)\in\N^3$ we let\footnote{By convention, if $j_3=0$, then in \eqref{Ckj} the preimage set is taken to be
$\1_{\abs{a}^{-1}[0,2^{3k+1}]}\br{x,y}$.}

\begin{equation}\label{Ckj}
C_{\vec{j},k}f\br{x,y}:=\1_{\abs{a}^{-1}\bR{2^{3k+j_3},2^{3k+j_3+1}}}\br{x,y}\,
            \int
                \br{P_{k,j_1}\otimes P_{2k,j_2}}f\br{x-t,y-t^2}
                e\br{a\br{x,y}t^3}
                \psi\br{2^k t}
            \frac{dt}{t}\,,
\end{equation}
\begin{equation}\label{Cj}
C_{\vec{j}}f:=\sum_{k\in\Z} C_{\vec{j},k}f\,,
\end{equation}
and notice that
\begin{equation*}
    Cf=\sum_{\vec{j}\in \N^3}C_{\vec{j}}f=\sum_{k\in\Z\atop{\vec{j}}\in \N^3}C_{\vec{j},k}f\,.
\end{equation*}

Once at this point, it is worth visualizing the multiplier form of the operator $ C_{\vec{j}}$ introduced above; indeed, focusing on the more subtle case $j_3\not=0$, we have
\begin{equation}\label{multform}
    C_{\vec{j}}f\br{x,y}:=
    \sum_{k\in\Z}
        \int
            m_{\vec{j}}\br{\frac{\xi}{2^k},\frac{\eta}{2^{2k}},\frac{a\br{x,y}}{2^{3k}}}\widehat{f}\br{\xi,\eta}
            e\br{x\xi+y\eta}
        d\xi d\eta\,,
\end{equation}
where the symbol is given by
\begin{equation}\label{mult}
    m_{\vec{j}}\br{\xi,\eta,a}:=
    \phi_{0,j_1}\br{\xi}\phi_{0,j_2}\br{\eta}\1_{\bR{2^{j_3},2^{j_3+1}}}\br{a}
    \int
        e\br{a t^3-\eta t^2-\xi t}
        \frac{\psi\br{t}}{t}
    dt.
\end{equation}

Dictated by the form of the expressions \eqref{multform} and \eqref{mult} in relation to the applicability of the (non)stationary phase principle, we partition the index-set $\N^3$ as follows:
\begin{equation*}
 \N^3=J_{0}\cup J_{ns}\cup J_s,\quad\textrm{where}
\end{equation*}
\begin{itemize}
\item the $0$-component is given by $J_0:=\N^2\times\{0\}$;

\item the non-stationary component is given by
\begin{equation*}
    J_{ns}:=\set{\vec{j}\in J\setminus J_0}{
        \,2^{\vert \vec{j}\vert_\infty}\lesssim 1+\hspace{-1.5em}
        \inf_{\substack{
            \frac{1}{2}\leq\abs{t}\leq 2\\
            \:\:\phi_{0,j_1}\br{\xi}\phi_{0,j_2}\br{\eta}\neq 0\\
            \:\:\:\:\:2^{j_3}\leq\abs{a}\leq 2^{j_3+1}
        }}\hspace{-1.5em}
            \abs{3at^2-2\eta t-\xi}
    },\quad
    \vert\vec{j}\vert_{\infty}:=\max\br{j_1,j_2,j_3}.
\end{equation*}

\item the stationary component is given by $J_s:=\N^3\setminus (J_{0}\cup J_{ns})$.
\end{itemize}
With these, according to the above index partitioning, we decompose our operator as follows:
\begin{equation*}
    C=C_L\,+\,C_{H,NS}\,+\,C_{H,S}\,,\quad\textrm{where}
\end{equation*}
\begin{itemize}
\item the \emph{low frequency} component
\begin{equation}\label{Low}
    C_L:=\sum_{\vec{j}\in J_0} C_{\vec{j}}\,;
\end{equation}
\item the \emph{high frequency  non-stationary} component
\begin{equation}\label{hns}
    C_{H,NS}:=\sum_{\vec{j}\in J_{ns}} C_{\vec{j}}\,;
\end{equation}
\item the \emph{high frequency stationary} component
\begin{equation}\label{hs}
    C_{H,S}:=\sum_{\vec{j}\in J_{s}} C_{\vec{j}}\,.
\end{equation}
\end{itemize}

\subsection{Treatment of the low frequency component $C_L$}\label{LF}

In this section we are proving the following

\begin{lemma}\label{lem_ptwise_est} Recalling the notation in Section \ref{Not}, we have the pointwise estimate
    \begin{equation*}
        \abs{C_L f}\lesssim H^\ast_\para f+M_\para f\,.
    \end{equation*}
Consequently, we deduce that
\begin{equation*}
    \nrm{C_L f}_{L^p}
    \underset{p}{\lesssim}
    \nrm{f}_{L^p},\quad \forall\, p\in\br{1,\infty}.
\end{equation*}
    \end{lemma}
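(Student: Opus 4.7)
My goal is to establish the pointwise bound $\abs{C_L f} \lesssim H^\ast_\para f + M_\para f$; the $L^p$ conclusion then follows by appealing to the classical $L^p$ boundedness of $H^\ast_\para$ and $M_\para$ for $p\in\br{1,\infty}$ recalled at the end of Section \ref{Not}.

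First, I would collapse the Littlewood-Paley sum: since $\sum_{j_1\in\N}P_{k,j_1}=\id$ and $\sum_{j_2\in\N}P_{2k,j_2}=\id$, the double sum over $J_0=\N^2\times\{0\}$ reduces to
\[
C_L f\br{x,y}=\sum_{k\ge k_0\br{x,y}}\int f\br{x-t,y-t^2}\,e\br{a\br{x,y}t^3}\,\psi\br{2^k t}\,\frac{dt}{t},
\]
where $k_0\br{x,y}$ denotes the smallest integer $k$ with $\abs{a\br{x,y}}\le 2^{3k+1}$. The crucial observation is that on the support $\abs{t}\approx 2^{-k}$, the constraint $k\ge k_0$ guarantees $\abs{a\br{x,y}t^3}\lesssim 1$; hence the cubic phase is non-oscillatory throughout the relevant range, and the Carleson supremum effectively collapses.

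Next, I would split $e\br{a t^3}=1+\br{e\br{a t^3}-1}$, producing a main term and an error term. For the main term, the telescoping identity $\sum_{k\ge k_0}\psi\br{2^k t}=\varphi\br{2^{k_0}t}$---an immediate consequence of $\psi=\varphi-\Dil^\infty_{1/2}\varphi$---identifies
\[
\sum_{k\ge k_0}\int f\br{x-t,y-t^2}\psi\br{2^k t}\frac{dt}{t}=\pv\!\int f\br{x-t,y-t^2}\varphi\br{2^{k_0}t}\frac{dt}{t}.
\]
Writing $\varphi\br{2^{k_0}t}=\1_{\{\abs{t}\le 2^{-k_0}\}}+\Br{\varphi\br{2^{k_0}t}-\1_{\{\abs{t}\le 2^{-k_0}\}}}$, the indicator piece is dominated by $H^\ast_\para f\br{x,y}$ as a principal-value truncated parabolic Hilbert transform, while the smooth boundary correction---supported on the annulus $\abs{t}\approx 2^{-k_0}$ with uniformly bounded symbol---is easily absorbed into $M_\para f\br{x,y}$.

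For the error term, the estimate $\abs{e\br{a t^3}-1}\lesssim\abs{a}2^{-3k}$ on the support yields, after bounding the resulting absolutely convergent integral by $M_\para f\br{x,y}$, a contribution of size $\abs{a}2^{-3k}M_\para f\br{x,y}$ at scale $k$. The geometric series $\sum_{k\ge k_0}\abs{a}2^{-3k}\lesssim 1$---thanks to the defining inequality $\abs{a}\le 2^{3k_0+1}$---then reduces the total error to a multiple of $M_\para f\br{x,y}$. No genuine obstacle appears: the argument is driven by the single observation that $J_0$ isolates precisely the regime in which the cubic modulation is trivialized, so that $C_L$ reverts to the classical (maximally truncated) Hilbert transform along parabola plus a harmless maximal remainder.
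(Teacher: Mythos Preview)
Your proof is correct and follows essentially the same approach as the paper: collapse the Littlewood--Paley sum, telescope $\sum_{k\ge k_0}\psi(2^k\cdot)=\varphi(2^{k_0}\cdot)$, split $e(at^3)=1+(e(at^3)-1)$, and dominate the pieces by $H^\ast_\para f+M_\para f$ and $M_\para f$ respectively. The paper merely reorders these steps---performing an Abel summation in $k$ first so that the phase split occurs at a single scale rather than being summed geometrically---and treats the case $a(x,y)=0$ (where your $k_0$ is undefined) separately; this is a cosmetic difference, and your geometric-series handling of the error term is equally valid.
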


\begin{proof}

Via a summation by part in \(k\in\Z\), we can rewrite the expression of $C_L$ as
\begin{align*}
    C_L f\br{x,y}= &
    \1_{\abs{a}^{-1}\BR{0}}\br{x,y}
    \cdot
    \int f\br{x-t,y-t^2}\frac{dt}{t}\\
    +\sum_{k\in\Z} &
        \1_{\abs{a}^{-1}\bR{2^{3k-2},2^{3k+1}}}\br{x,y}
        \cdot
        \int
            f\br{x-t,y-t^2}
            e\br{a\br{x,y}t^3}
            \varphi\br{2^k t}
        \frac{dt}{t}.
\end{align*}
In other words, for fixed \(\br{x,y}\in \R^2\), either \(a\br{x,y}=0\) and
\begin{equation*}
    \abs{C_Lf\br{x,y}}=
    \abs{\int f\br{x-t,y-t^2}\frac{dt}{t}}
    \leq H^\ast_\para f\br{x,y},
\end{equation*}
or there's a unique \(k\in\Z\) such that:
\begin{equation*}
    C_L f\br{x,y}=
    \int
        f\br{x-t,y-t^2}
        e\br{a\br{x,y}t^3}
        \varphi\br{2^k t}
    \frac{dt}{t}
\end{equation*}
with \(\abs{a\br{x,y}}\in \bR{2^{3k-2},2^{3k+1}}\). We now rewrite the expression:
\begin{align*}
    C_L f\br{x,y}= &
    \int
        f\br{x-t,y-t^2}
        \varphi\br{2^k t}
    \frac{dt}{t} & \leadsto I_1\\
    + &
    \int
        f\br{x-t,y-t^2}
        \frac{e\br{a\br{x,y}t^3}-1}{t}
        \varphi\br{2^k t}
    dt & \leadsto I_2.
\end{align*}
For \(I_1\), recall that \(\1_{\Br{-1,1}}\leq \varphi\leq \1_{\br{-2,2}}\) and thus \(0\leq \varphi-\1_{\Br{-1,1}}\leq \1_{\br{-2,2}\setminus \Br{-1,1}}\). As a result,
\begin{align}\label{I1}
    \abs{I_1}
    \leq &
    \abs{
        \int
            f\br{x-t,y-t^2}
            \1_{\Br{-1,1}}\br{2^k t}
        \frac{dt}{t}
    }
    +
    \int
        \abs{f}\br{x-t,y-t^2}
        \abs{
            \varphi-\1_{\Br{-1,1}}
        }
        \br{2^k t}
    \frac{dt}{\abs{t}}\nonumber\\
    \lesssim &
    H^\ast_\para f\br{x,y}+M_\para f\br{x,y}.
\end{align}
For \(I_2\), we use that \(\abs{e\br{z}-1}\lesssim \abs{z}\) and the assumption \(\abs{a\br{x,y}}\in \bR{2^{3k-2},2^{3k+1}}\) in order to deduce
\begin{align}\label{I2}
    \abs{I_2}\lesssim &
    \int
        \abs{f}\br{x-t,y-t^2}
        \abs{a\br{x,y} t^2}
        \varphi\br{2^k t}
    dt\nonumber\\
    \eqsim &
    \int
        \abs{f}\br{x-t,y-t^2}
        2^k
        \br{2^kt}^2\varphi\br{2^k t}
    dt
    \lesssim
    M_\para f\br{x,y}.
\end{align}
From \eqref{I1} and \eqref{I2} we obtain the desired conclusion.
\end{proof}

\subsection{Treatment of the high frequency non-stationary component $C_{H,NS}$}\label{HNS}

In this section we are proving the following

\begin{lemma}\label{lem_ptwise_improv} For \(\vec{j}\in J_{ns}\) we have the following pointwise estimate
    \begin{equation}\label{pns}
        \vert C_{\vec{j}}f\vert \underset{N}{\lesssim} 2^{-N\vert{j}\vert_\infty}M_\str f.
    \end{equation}
Consequently, we deduce that
\begin{equation}\label{ns}
    \nrm{C_{H, NS} f}_{L^p}
    \underset{p}{\lesssim}
    \nrm{f}_{L^p},\quad \forall p\in\br{1,\infty}.
\end{equation}
    \end{lemma}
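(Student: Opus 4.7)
The plan is to derive the pointwise bound \eqref{pns} via a standard non-stationary-phase analysis of the symbol $m_{\vec{j}}$, translate it into an anisotropic Schwartz envelope for the kernel of $C_{\vec{j},k}$, collapse the $k$-sum using the fact that the cutoff on $|a|$ selects a single scale at each point, and finally obtain \eqref{ns} by summing rapid decay in $\vec{j}$ and invoking the $L^p$-boundedness of $M_{\str}$.

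\textbf{Step 1 (symbol decay).} Using the multiplier representation \eqref{multform}--\eqref{mult}, the non-trivial object is the oscillatory integral $I(\xi,\eta,a):=\int e(at^3-\eta t^2-\xi t)\,\psi(t)\,\frac{dt}{t}$. On the relevant support, the very hypothesis defining $J_{ns}$ asserts the non-stationarity $|3at^2-2\eta t-\xi|\gtrsim 2^{|\vec{j}|_\infty}$ for $t\in\supp\psi$, so iterated integration by parts in $t$ yields $|I|\lesssim_N 2^{-N|\vec{j}|_\infty}$. Additional $\xi$- or $\eta$-derivatives on the integrand merely produce bounded polynomial factors in $t$, so the same decay survives for $\partial_\xi^\alpha\partial_\eta^\beta m_{\vec{j}}$ with constants uniform in $k$.

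\textbf{Step 2 (kernel envelope and pointwise domination by $M_{\str}$).} Writing $C_{\vec{j},k}f(x,y)=\1_{|a|^{-1}(2^{3k+j_3},2^{3k+j_3+1}]}(x,y)\cdot\bigl(K^{a(x,y)}_{\vec{j},k}\ast f\bigr)(x,y)$, and performing $M$ integrations by parts in $\xi$ and $\eta$ in the Fourier inversion defining $K^{a}_{\vec{j},k}$, the symbol bounds of Step 1 translate into
\begin{equation*}
\bigl|K^{a}_{\vec{j},k}(u,v)\bigr|\lesssim_N 2^{-N|\vec{j}|_\infty}\cdot 2^{3k}\,\ang{2^k u}^{-M}\ang{2^{2k}v}^{-M}.
\end{equation*}
Modulo the gain $2^{-N|\vec{j}|_\infty}$, this is an $L^1$-normalized anisotropic bump on the rectangle of dimensions $2^{-k}\times 2^{-2k}$, so the standard envelope argument gives the pointwise domination $\bigl|K^{a(x,y)}_{\vec{j},k}\ast f(x,y)\bigr|\lesssim_N 2^{-N|\vec{j}|_\infty}M_{\str}f(x,y)$.

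\textbf{Step 3 (collapse of the $k$-sum and summation over $\vec{j}$).} For each fixed $(x,y)$ with $a(x,y)\neq 0$, the cutoff $\1_{|a|^{-1}(2^{3k+j_3},2^{3k+j_3+1}]}$ selects a \emph{unique} scale $k=k(x,y)\in\Z$, so $C_{\vec{j}}f(x,y)=C_{\vec{j},k(x,y)}f(x,y)$ is a single term and \eqref{pns} follows. Summing \eqref{pns} over $\vec{j}\in J_{ns}$ using the geometric estimate $\sum_{\vec{j}\in\N^3}2^{-N|\vec{j}|_\infty}<\infty$ (for any $N\geq 4$) yields $|C_{H,NS}f|\lesssim M_{\str}f$ pointwise, and \eqref{ns} then follows from the classical strong-type $(p,p)$ bound for $M_{\str}$, $1<p<\infty$. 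The main obstacle is the bookkeeping inside Step 2: one has to verify that every $\xi$- or $\eta$-derivative acting on $m_{\vec{j}}$ either lands on the Littlewood--Paley cutoffs $\phi_{k,j_1}$, $\phi_{2k,j_2}$ (producing anisotropic factors $2^{-k}$, $2^{-2k}$ that are harmlessly absorbed by the $\ang{2^k u}^{-M}$, $\ang{2^{2k}v}^{-M}$ envelopes) or on the oscillatory $t$-integral (multiplying by bounded powers of $t\in\supp\psi$ so that the non-stationary decay $2^{-N|\vec{j}|_\infty}$ persists). With this accounting done, the rest of the proof is routine.
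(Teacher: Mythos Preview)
Your proof is correct and follows essentially the same approach as the paper: collapse the $k$-sum to a single term via the dyadic cutoff on $|a|$, establish the symbol estimate $|\partial_\xi^\alpha\partial_\eta^\beta m_{\vec{j}}|\lesssim_{\alpha,\beta,N}2^{-N|\vec{j}|_\infty}$ by non-stationary phase, convert this to an anisotropic kernel envelope at scale $2^{-k}\times 2^{-2k}$, and dominate by $M_{\str}$. The only cosmetic difference is that the paper first reduces to a single $k$ and then bounds the kernel, whereas you bound the kernel first and then collapse the sum; the content is identical.
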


\begin{proof}

We claim\footnote{Without loss of generality we may assume that $\vert{j}\vert_\infty\geq 100$ as otherwise there is nothing to prove.} that a simple inspection of \eqref{multform} and \eqref{mult} reveals that in the high-frequency non-stationary case, that is when \(\vec{j}\in J_{ns}\), we gain an extra-decaying factor via the application of the non-stationary phase principle. Indeed, we first notice that for a fixed \(\vec{j}\in J_{ns}\) and \(\br{x,y}\in \R^2\) there is at most one \(k\in\Z\) such that \(\abs{a\br{x,y}}\in \bR{2^{3k+j_3},2^{3k+j_3+1}}\) and thus
\begin{equation*}
    C_{\vec{j}}f\br{x,y}=
    \int
        m_{\vec{j}}\br{\frac{\xi}{2^k},\frac{\eta}{2^{2k}},\frac{a\br{x,y}}{2^{3k}}}\widehat{f}\br{\xi,\eta}
        e\br{x\xi+y\eta}
    d\xi d\eta.
\end{equation*}
Consequently, the non-stationary phase principle implies:
\begin{equation*}
    \abs{
        \frac{\partial^\alpha}{\partial \xi^\alpha}
        \frac{\partial^\beta}{\partial \eta^\beta}
        m_{\vec{j}}\br{\xi,\eta,a}
    }\underset{\alpha,\beta,N}{\lesssim} 2^{-N\vert \vec{j}\vert_\infty},\quad \forall \alpha,\beta,N\in \N\,,
\end{equation*}
and thus
\begin{equation*}
    \nrm{
        \ang{x}^{2N}
        \ang{y}^{2N}
        \br{m_{\vec{j}}\br{\cdot,\cdot,a}}^{\vee}\br{x,y}
    }_{L^\infty\br{dx dy}}
    \lesssim
    \nrm{
        \br{
            \id-\frac{\partial^2}{\partial \xi^2}
        }^N
        \br{
            \id-\frac{\partial^2}{\partial \eta^2}
        }^N
        m_{\vec{j}}\br{\xi,\eta,a}
    }_{L^1\br{d\xi d\eta}}
    \underset{N}{\lesssim} 2^{-N\vert\vec{j}\vert_\infty}.
\end{equation*}
Conclude now that
\begin{align*}
    \vert
    C_{\vec{j}} f\br{x,y}
    \vert =
    &
    \abs{
        \int
            m_{\vec{j}}\br{\frac{\xi}{2^k},\frac{\eta}{2^{2k}},\frac{a\br{x,y}}{2^{3k}}}\widehat{f}\br{\xi,\eta}
            e\br{x\xi+y\eta}
        d\xi d\eta
    }\nonumber\\
    \lesssim &
    2^{-N\vert \vec{j}\vert_\infty}
    \br{
        \Dil^1_{2^{-k}}\ang{\cdot}^{-2N}\otimes \Dil^1_{2^{-2k}}\ang{\cdot}^{-2N}
    }\ast \abs{f}\br{x,y}\nonumber\\
    \lesssim & 2^{-N\vert \vec{j}\vert_\infty}
    M_\str f\br{x,y}\,.
\end{align*}
\end{proof}

\subsection{Treatment of the high frequency stationary component $C_{H,S}$ assuming Theorem \ref{thm_main_loc}}\label{HS1}

The goal of this section is to prove the following

\begin{lemma} \label{lem_w22_jest}
For \(\vec{j}\in J_{s}\) the following hold:
\begin{itemize}
\item with the previous notations--see Section \ref{Not}, we have the pointwise estimate

 \begin{equation}\label{peh}
 \vert C_{\vec{j}} f\vert\lesssim M_\para M_\str f.
 \end{equation}

\item assuming that the single scale estimate in Theorem \ref{thm_main_loc} holds, we have
\begin{equation}\label{extl2}
\Vert C_{\vec{j}} f \Vert_{L^{2,\infty}} \lesssim 2^{-\sigma\vert\vec{j}\vert_\infty}\nrm{f}_{L^2}\,,
\end{equation}
where here \(\sigma\) is the same as in the statement of Theorem \ref{thm_main_loc}.
\end{itemize}
Deduce via real interpolation that for every \(p\in\br{1,\infty}\) there exists a constant \(\sigma_p>0\) such that:
\begin{equation}\label{s}
\Vert C_{\vec{j}} f \Vert_{L^p}\underset{p}{\lesssim}2^{-\sigma_p\vert\vec{j}\vert_\infty}\nrm{f}_{L^p} \quad\textrm{and hence}\quad \nrm{C_{H, S} f}_{L^p}
    \underset{p}{\lesssim}
    \nrm{f}_{L^p}\,.
\end{equation}
\end{lemma}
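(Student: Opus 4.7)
\medskip

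\textbf{Pointwise bound.} For fixed $(x,y)$, the indicator in \eqref{Ckj} selects a unique scale $k$ (determined by $|a(x,y)|$). On the support of $\psi(2^k\cdot)$ we have $|t|\sim 2^{-k}$, so the factor $1/|t|$ is absorbed into the parabolic averaging normalization, yielding
\[
|C_{\vec{j},k}f(x,y)| \lesssim M_\para\bigl[(P_{k,j_1}\otimes P_{2k,j_2})f\bigr](x,y).
\]
A standard pointwise convolution estimate further dominates the Littlewood--Paley piece by $M_\str f(x,y)$, giving \eqref{peh}; the uniform $L^p$-boundedness of $C_{\vec{j}}$ is then immediate from the classical $L^p$-bounds for $M_\para$ and $M_\str$.

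\textbf{Weak-type decay via Theorem \ref{thm_main_loc}.} By the standard sign/indicator linearization of the $L^{2,\infty}$ norm, the bound \eqref{extl2} reduces to the pairing estimate
\[
|\langle C_{\vec{j}}f, \1_E\rangle| \lesssim 2^{-\sigma|\vec{j}|_\infty}\,\|f\|_{L^2}\,|E|^{1/2}
\]
for arbitrary measurable $E$ of finite measure. Restricting to $|\vec{j}|_\infty$ large (the complementary regime being trivial via \eqref{peh}), I partition $E=\bigsqcup_{k\in\Z}E_k$ with $E_k:=E\cap|a|^{-1}(2^{3k+j_3},2^{3k+j_3+1}]$ and treat each scale separately. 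For fixed $k$, the parabolic dilation $(x,y)\mapsto (2^{-k}x,2^{-2k}y)$ together with $t\mapsto 2^{-k}t$ normalizes $C_{\vec{j},k}$ into a unit-scale operator of the form \eqref{unitsc}, with the bump $\rho$ replaced by $\psi(t)/t$---still a $C^\infty_c$-bump on $[1/2,2]$ and hence admissible in the proof of Theorem \ref{thm_main_loc}. After rescaling, the Fourier support of $\tilde{f}_k:=(P_{k,j_1}\otimes P_{2k,j_2})f$ localizes to the dyadic box at scales $(2^{j_1},2^{j_2})$, while the rescaled multiplier variable $\tilde{a}$ lies in $(2^{j_3},2^{j_3+1}]$. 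Calibrating $\lambda\sim 2^{|\vec{j}|_\infty}$---after a possible further dyadic refinement of the Littlewood--Paley pieces to accommodate the precise annular form of \eqref{eq_f_a_E_cond}---the support hypothesis is satisfied because $\vec{j}\in J_s$ forces at least two of $(j_1,j_2,j_3)$ to saturate $|\vec{j}|_\infty$ up to $O(1)$, placing at least one coordinate of $(\xi',\eta',\tilde{a})$ above $\lambda$ on the support. Theorem \ref{thm_main_loc} together with a change-of-variables computation (whose Jacobians cancel exactly) gives
\[
|\langle C_{\vec{j},k}f, \1_{E_k}\rangle| \lesssim 2^{-\sigma|\vec{j}|_\infty}\,\|f_k\|_{L^2}\,|E_k|^{1/2},
\]
and Cauchy--Schwarz in $k$ closes the estimate, exploiting the dyadic disjointness of $\{E_k\}_k$ together with the $L^2$-almost-orthogonality of $\{f_k\}_k$ (coming from essentially disjoint $\eta$-annuli when $j_2\geq 1$, or $\xi$-annuli when $j_1\geq 1$).

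\textbf{Interpolation and main obstacle.} Marcinkiewicz real interpolation between the uniform strong $L^p$-bound (from \eqref{peh}) and the decaying weak $L^2$-bound \eqref{extl2} produces $\|C_{\vec{j}}f\|_{L^p}\lesssim 2^{-\sigma_p|\vec{j}|_\infty}\|f\|_{L^p}$ for every $p\in(1,\infty)$, and the geometric series $\sum_{\vec{j}\in J_s}2^{-\sigma_p|\vec{j}|_\infty}$ converges, yielding \eqref{s}. The main technical obstacle is the reduction step: verifying the annular support hypothesis \eqref{eq_f_a_E_cond} and the orthogonality of $\{f_k\}_k$ in the potentially degenerate scenario $j_1=j_2=0$. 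Both issues are resolved by a direct phase-derivative calculation showing that $j_1=j_2=0$ with $j_3$ large forces $|3\tilde{a}t^2-2\eta't-\xi'|\sim 2^{j_3}=2^{|\vec{j}|_\infty}$ (no cancellation), thereby placing $\vec{j}$ in $J_{ns}$ rather than $J_s$; hence this degeneracy does not occur in the regime of interest.
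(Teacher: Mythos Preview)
Your proof is correct and follows essentially the same route as the paper's: the pointwise bound via $M_\para M_\str$, the rescaling of each $k$-scale into the unit-scale operator $\mathcal{C}^{(a)}$ so that Theorem~\ref{thm_main_loc} applies with $\lambda\sim 2^{|\vec{j}|_\infty}$, and the Cauchy--Schwarz summation in $k$ using the disjointness of the $E_k$ together with the $L^2$-orthogonality of the $f_k$ coming from $(j_1,j_2)\neq(0,0)$. Your observation that at least two of $(j_1,j_2,j_3)$ must saturate $|\vec{j}|_\infty$ for $\vec{j}\in J_s$ is true but stronger than needed---one saturating index already places the triple outside $[-\lambda,\lambda]^3$---while the key fact you correctly isolate, that $j_1=j_2=0$ forces $\vec{j}\in J_{ns}$, is exactly what the paper uses (stated there without proof).
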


\begin{proof}

We start by discussing the pointwise estimate \eqref{peh}: for a fixed \(\br{x,y}\in \R^2\) we notice that there is at most one \(k\in\Z\) such that \(\abs{a\br{x,y}}\in \bR{2^{3k+j_3},2^{3k+j_3+1}}\) and hence
\begin{equation*}
    C_{\vec{j}}f\br{x,y}=
    \int
        \br{
            P_{k,j_1}\otimes
            P_{2k,j_2}
        }f\br{x-t,y-t^2}
        e\br{a\br{x,y}t^3}
        \psi\br{2^k t}
    \frac{dt}{t}.
\end{equation*}
Since \(P_{k,j_1}\otimes P_{2k,j_2}\) is a convolution operator with
$\br{
        P_{k,j_1}\otimes P_{2k,j_2}
    }
    f:=
    \br{
        \Dil^1_{2^{-k-j_1}}\widecheck{\phi_1}\otimes
        \Dil^1_{2^{-2k-j_2}}\widecheck{\phi_2}
    }\ast f$
and
\(
    \br{\phi_1,\phi_2}\in
    \BR{
        \br{\varphi,\psi},
        \br{\psi,\varphi},
        \br{\psi,\psi}
    }
\),
we have the following trivial bound:
\begin{equation*}
    \abs{
        \br{
            P_{k,j_1}\otimes
            P_{2k,j_2}
        }f
    }
    \lesssim
    M_\str f.
\end{equation*}
Consequently, estimate \eqref{peh} follows from
\begin{align*}
    &
    \abs{
        \int
            \br{
                P_{k,j_1}\otimes
                P_{2k,j_2}
            }f\br{x-t,y-t^2}
            e\br{a\br{x,y}t^3}
            \psi\br{2^k t}
        \frac{dt}{t}
    }\nonumber\\
    \lesssim &
    \int
        M_\str f\br{x-t,y-t^2}
        2^k\1_{\br{-2,2}}\br{2^k t}
    dt
    \lesssim
    M_\para M_\str f\br{x,y}\,.
\end{align*}

We shift now to \eqref{extl2}: as before, we fix \(\vec{j}\in J_{s}\) and consider the auxiliary measurable function
$$a_k:=\frac{\br{\Dil^\infty_{2^k}\otimes \Dil^\infty_{2^{2k}}}a}{2^{3k}}\,.$$
We now rewrite the expression:
\begin{equation*}
    C_{\vec{j}}f=
    \sum_{k\in\Z}\br{\Dil^2_{2^{-k}}\otimes \Dil^2_{2^{-2k}}}\Br{
            \1_{
                \abs{a_k}^{-1}
                \bR{2^{j_3},2^{j_3+1}}
            }
            \cdot
            \mathcal{C}^{\br{a_k}}
            \br{P_{0,j_1}\otimes P_{0,j_2}}
            \br{
                \Dil^2_{2^k}\otimes
                \Dil^2_{2^{2k}}
            }f
        }.
\end{equation*}
Take now a measurable set \(E\subset \R^2\) and dualize the above expression in order to obtain
\begin{equation*}
    \ang{
        C_{\vec{j}}f,\1_E
    }=\sum_{k\in\Z}
    \big\langle
        \mathcal{C}^{\br{a_k}}
        \br{P_{0,j_1}\otimes P_{0,j_2}}
            \br{
                \Dil^2_{2^k}\otimes
                \Dil^2_{2^{2k}}
            }f
    ,
        \1_{
                \abs{a_k}^{-1}
                \bR{2^{j_3},2^{j_3+1}}
            }
        \br{
            \Dil^2_{2^k}\otimes
            \Dil^2_{2^{2k}}
        }\1_E
    \big\rangle.
\end{equation*}
Applying now Theorem \ref{thm_main_loc} we deduce
\begin{equation*}
    \abs{
        \ang{
            C_{\vec{j}}f,\1_E
        }
    }
    \lesssim
    2^{-|\vec{j}|_\infty\sigma}
    \sum_{k\in\Z}
    \nrm{
        \br{P_{0,j_1}\otimes P_{0,j_2}}
        \br{
            \Dil^2_{2^k}\otimes
            \Dil^2_{2^{2k}}
        }f
    }_{L^2}
    \cdot
    \nrm{
        \1_{
                \abs{a_k}^{-1}
                \bR{2^{j_3},2^{j_3+1}}
            }
        \br{
            \Dil^2_{2^k}\otimes
            \Dil^2_{2^{2k}}
        }\1_E
    }_{L^2}.
\end{equation*}
It remains to notice that
\begin{equation}\nonumber
    \nrm{
        \br{P_{0,j_1}\otimes P_{0,j_2}}
        \br{
            \Dil^2_{2^k}\otimes
            \Dil^2_{2^{2k}}
        }f
    }_{L^2}
    =
    \nrm{
        \br{P_{k,j_1}\otimes P_{2k,j_2}} f
    }_{L^2}\,,
\end{equation}
and, similarly, that
\begin{equation}\nonumber
    \nrm{
        \1_{
                \abs{a_k}^{-1}
                \bR{2^{j_3},2^{j_3+1}}
            }
        \br{
            \Dil^2_{2^k}\otimes
            \Dil^2_{2^{2k}}
        }\1_E
    }_{L^2}
    =
    \abs{
        \abs{a}^{-1}\bR{2^{3k+j_3},2^{3k+j_3+1}}
        \cap E
    }^{\frac{1}{2}}.
\end{equation}
Now the condition \(\vec{j}\in J_{s}\) implies in particular that \(\br{j_1,j_2}\neq \br{0,0}\) which allows us to exploit an almost-orthogonality feature. Putting all these together and applying Cauchy-Schwarz, we conclude
\begin{equation*}
    \abs{
        \ang{
            C_{\vec{j}}f,\1_E
        }
    }
    \lesssim
    2^{-\sigma\vert\vec{j}\vert_\infty}
    \nrm{f}_{L^2}\abs{E}^{\frac{1}{2}}\,.
\end{equation*}

\end{proof}

\section{The proof of Theorem \ref{thm_main_loc} via the LGC-method}\label{mainsect}

In this section we prove Theorem \ref{thm_main_loc} by relying on the (Rank I) LGC-methodology introduced in \cite{LVunif} and further developed in \cite{LVBilHilbCarl}, \cite{HL(CTHT)}, \cite{GALVHybcurves}; as explained in these papers, this method consists of three main steps:
\begin{itemize}
\item \textsf{Step I.} Partition of the time-frequency space in regions that enforce a \textsf{linear} behavior of the argument of the input functions and of the oscillatory phase of the kernel/multiplier;

\item \textsf{Step II.} \textsf{Wave-packet/Gabor} frame discretization of the input functions at the scale that is consistent with the partition implemented at Step I;

\item \textsf{Step III.} Analysis of the resulting \textsf{time-frequency correlations} via an array of tools that include (i) a \emph{sparse-uniform dichotomy} addressing the distribution of the information encapsulated in the wave-packet decomposition carried at Step II and (ii) a \emph{level set analysis} addressing the maximally sparse situation and captured within a so-called time-frequency correlation set.
\end{itemize}

According to the above display we will subdivide our presentation in three components: in the first subsection we complete steps I and II above, while in the next two subsections we address Step III as follows: in the second subsection we implement a suitable sparse-uniform dichotomy analysis while in the third subsection we perform the level set analysis adapted to the time-frequency correlation set.

We close this introductory paragraph with the following comment: throughout the rest of the section, without loss of generality we will assume that \(\lambda>100\), \(\abs{E}<\infty\), and that the four-tuple \((\lambda,f,a,E)\) is fixed and satisfies assumption \eqref{eq_f_a_E_cond} in Theorem \ref{thm_main_loc}.

\subsection{The linearizing wave-packet decomposition of the expression $\ang{\mathcal{C}^{\br{a}}f,\1_E}$}\label{linsec}

$\:\:\quad$As mentioned above, in this subsection we complete Steps I and II of the LGC-method obtaining thus the control of our main expression $\ang{\mathcal{C}^{\br{a}}f,\1_E}$ by a wave-packet discretized model whose linearizing effect transforms the curvature feature of $\mathcal{C}^{\br{a}}$ into suitable time-frequency correlations--the latter can be visualized as a collection of dependencies (ultimately, a non-linear system of equations) between the spatial and frequency parameters involved in the discretized model.

Our presentation here will be further divided in two subsections: the first one encompasses the choice of the linearizing scale in our Gabor analysis and some basic properties of a wave-packet decomposition while the second one addresses the specifics of the adapted linearized wave-packet discretization of $\ang{\mathcal{C}^{\br{a}}f,\1_E}$.

\subsubsection{Preliminaries}

We first start by briefly touching the choice of the \emph{linearizing scale}: dictated by the hypothesis \eqref{eq_f_a_E_cond} on the size of the linearizing function $a$ (for the moment being one may think that $|a|\approx\l$) a simple inspection of \eqref{mult} gives, via a Taylor series argument, that the maximal physical scale for which the phase of the multiplier behaves linearly in $t$  is $(\sqrt{\l})^{-1}$. As a consequence, our subsequent Gabor analysis will involve wave-packets adapted to this
linearizing scale.

Next, we recall several basic facts about wave-packets and related concepts:
\begin{itemize}
\item a (frequency compactly supported) function \(\Phi\in L^2\br{\R^2}\) is time-frequency localized at unit scale if
    \begin{equation*}
        \abs{\Phi}\leq \ang{\cdot}^{-10N}\otimes \ang{\cdot}^{-10N},\quad
        |\widehat{\Phi}|\leq \1_{\Br{-1,1}^2}\quad \textrm{with}\quad N\in\N,\,N\geq 10\,;
    \end{equation*}
\item the wave-packets of interest in our present paper are constructed as follows: if \(\Phi\in L^2\br{\R^2}\) is as above, \(p,q\in\R\) and \(u,v\in\Z\) then, recalling \eqref{symm}, we set
    \begin{align*}
        \Phi_{p,q,u,v}:= &
        \br{
            \Dil^2_{1/\sqrt{\lambda}}\otimes
            \Dil^2_{1/\sqrt{\lambda}}
        }
        \br{
            \Tr_{\lfloor p\rfloor} \otimes
            \Tr_{\lfloor q\rfloor}
        }
        \br{
            \Mod_{u}\otimes
            \Mod_{v}
        }
        \Phi
        \,;
    \end{align*}

\item in order to better capture the local properties of the function \(f\in L^2\br{\R^2}\) encapsulated in the behavior of its Gabor coefficients
\(
    \BR{
        \ang{
            f,\Phi_{p,q,u,v}
        }
    }_{p,q,u,v\in\Z}
\) we introduce the following auxiliary measure: if \(S\subset \R^2\) is a Lebesgue measurable set and \(p,q\in\R\), we define
    \begin{equation*}
        \mu_{p,q}\br{S}:=
        \int_S
            \ang{\sqrt{\lambda}x-\lfloor p\rfloor}^{-N}
            \ang{\sqrt{\lambda}y-\lfloor q\rfloor}^{-N}
        dxdy\,,
    \end{equation*}
    and notice that
     \begin{equation}\label{meascontr}
     \mu_{p,q}\br{S}\leq \mu_{p,q}\br{\R^2}\approx\frac{1}{\l}\,.
     \end{equation}
    Also, remark that at the heuristic level \(\mu_{p,q}\br{S}:=\abs{S\cap I_{p,q}}\), with \(I_{p,q}:=\bR{\frac{\lfloor p\rfloor}{\sqrt{\lambda}},\frac{\lfloor p+1 \rfloor}{\sqrt{\lambda}}}\times\bR{\frac{\lfloor q\rfloor}{\sqrt{\lambda}},\frac{\lfloor q+1\rfloor}{\sqrt{\lambda}}}\).
    \end{itemize}

With these being said, we now state the following result whose easy proof is left to the reader:

\begin{lemma}\label{prop_Gabor_bds}
    For \(f\in L^2\br{\R^2}\) and a time-frequency localized \(\Phi\in L^2\br{\R^2}\), we have the following inequalities:
    \begin{align*}
        \abs{
            \ang{
                f,\Phi_{p,q,u,v}
            }
        }
        \leq
        \sqrt{\lambda}
        \nrm{f}_{L^1\br{d\mu_{p,q}}},\quad
        \br{
            \sum_{u,v}
                \abs{
                    \ang{
                        f,\Phi_{p,q,u,v}
                    }
                }^2
        }^{\frac{1}{2}}
        \lesssim
        \nrm{
            f
        }_{L^2\br{d\mu_{p,q}}}.
    \end{align*}
As a direct consequence,
\begin{equation*}
    \br{
        \sum_{
            p,q,u,v
        }
            \abs{
                \ang{
                    f,\Phi_{p,q,u,v}
                }
            }^2
    }^{\frac{1}{2}}
    \lesssim
    \nrm{f}_{L^2}
    .
\end{equation*}
\end{lemma}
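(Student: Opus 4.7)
The statement packages three standard facts about Gabor-type wave-packets; the proof amounts to unwinding the definition of $\Phi_{p,q,u,v}$ and exploiting (i) pointwise Schwartz decay, (ii) Plancherel on the torus, and (iii) summability of a shifted Japanese bracket tail. I will handle the three inequalities in the order listed.

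\textbf{First inequality.} Writing out the operators in \eqref{symm} one finds
\[
    \Phi_{p,q,u,v}(x,y)=\lambda^{1/2}\,e\bigl(u(\sqrt{\lambda}x-\lfloor p\rfloor)+v(\sqrt{\lambda}y-\lfloor q\rfloor)\bigr)\,\Phi\bigl(\sqrt{\lambda}x-\lfloor p\rfloor,\sqrt{\lambda}y-\lfloor q\rfloor\bigr).
\]
The hypothesis $|\Phi|\leq\langle\cdot\rangle^{-10N}\otimes\langle\cdot\rangle^{-10N}$ together with $\langle\cdot\rangle\ge 1$ yields
\[
    |\Phi_{p,q,u,v}(x,y)|\leq\sqrt{\lambda}\,\langle\sqrt{\lambda}x-\lfloor p\rfloor\rangle^{-N}\langle\sqrt{\lambda}y-\lfloor q\rfloor\rangle^{-N},
\]
and integrating $|f|$ against this bound gives $|\langle f,\Phi_{p,q,u,v}\rangle|\leq\sqrt{\lambda}\,\nrm{f}_{L^{1}(d\mu_{p,q})}$.

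\textbf{Second inequality.} I would exploit the fact that after the change of variables $(s,t)=(\sqrt{\lambda}x-\lfloor p\rfloor,\sqrt{\lambda}y-\lfloor q\rfloor)$ one has
\[
    \langle f,\Phi_{p,q,u,v}\rangle=\lambda^{-1/2}\int_{\R^{2}}h_{p,q}(s,t)\,e(-us-vt)\,dsdt,\qquad h_{p,q}(s,t):=f\!\Bigl(\tfrac{s+\lfloor p\rfloor}{\sqrt{\lambda}},\tfrac{t+\lfloor q\rfloor}{\sqrt{\lambda}}\Bigr)\overline{\Phi(s,t)}.
\]
Thus the $\ell^{2}(u,v)$-norm of the Gabor coefficients equals $\lambda^{-1/2}\nrm{\widehat{h_{p,q}}\!\restriction_{\Z^{2}}}_{\ell^{2}}$, which, by Poisson summation and Plancherel on the torus, equals $\lambda^{-1/2}\nrm{H_{p,q}}_{L^{2}([0,1]^{2})}$ where $H_{p,q}$ is the $\Z^{2}$-periodization of $h_{p,q}$. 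A Cauchy--Schwarz argument with weights $\langle s+m\rangle^{-10N}\langle t+n\rangle^{-10N}$ absorbs the periodization, giving
\[
    \int_{[0,1]^{2}}|H_{p,q}|^{2}\lesssim\int_{\R^{2}}|h_{p,q}(s,t)|^{2}\langle s\rangle^{10N}\langle t\rangle^{10N}\,dsdt\lesssim\lambda\int|f|^{2}\langle\sqrt{\lambda}x-\lfloor p\rfloor\rangle^{-10N}\langle\sqrt{\lambda}y-\lfloor q\rfloor\rangle^{-10N}dxdy,
\]
where the first step uses the uniform bound $\sum_{m,n}\langle s+m\rangle^{-10N}\langle t+n\rangle^{-10N}\lesssim 1$ on $[0,1]^{2}$, and the second step uses $|\Phi|\leq\langle\cdot\rangle^{-10N}\otimes\langle\cdot\rangle^{-10N}$ and the change of variables back to $(x,y)$. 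Since $10N\ge N$, the right-hand side is controlled by $\lambda\,\nrm{f}_{L^{2}(d\mu_{p,q})}^{2}$, yielding the claim.

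\textbf{Global consequence.} Summing the second inequality over $p,q\in\Z$ and switching the order of integration and summation produces a factor $\sum_{p,q\in\Z}\langle\sqrt{\lambda}x-p\rangle^{-N}\langle\sqrt{\lambda}y-q\rangle^{-N}$, which is bounded by an absolute constant uniformly in $(x,y)$ provided $N\ge 2$. Hence $\sum_{p,q,u,v}|\langle f,\Phi_{p,q,u,v}\rangle|^{2}\lesssim\nrm{f}_{L^{2}(\R^{2})}^{2}$, as required. No step presents a genuine obstacle: the only point worth slight attention is carrying enough decay (hence the generous exponent $10N$ in the hypothesis) so that the periodization bound and the outer sum over $(p,q)$ are both summable with room to spare.
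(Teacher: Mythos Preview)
Your proof is correct and follows the natural route; the paper itself leaves this lemma to the reader, so there is no alternative argument to compare against. One minor remark: in the second inequality you could alternatively exploit the compact Fourier support hypothesis $|\widehat{\Phi}|\le\1_{[-1,1]^2}$, which makes the family $\{\widehat{\Phi_{p,q,u,v}}\}_{u,v}$ have bounded overlap in frequency and yields the Bessel bound without periodization---but your Poisson-summation argument is equally valid (for general $f\in L^2$ one passes through a density argument, which is routine).
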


\subsubsection{The wave-packet discretized model}\label{subsec_wp}
Our goal in this subsection is to prove the following

\begin{lemma}
    There are time-frequency localized \(\Phi,\Psi\in L^2\br{\R^2}\)
    and a function \(\chi\) on \(\R\) with \(\abs{\chi}\leq \1_{\Br{-1,1}}\) such that:
    \begin{equation}\label{eq_wp_rep}
        \abs{
            \ang{\mathcal{C}^{\br{a}}f,\1_E}
        }
        \lesssim
        \hspace{-1em}
        \sum_{\substack{p,q,r\\
        u,v,w:\\
        \frac{\sqrt{\lambda}}{3}\leq r\leq 3\sqrt{\lambda}
        }}\hspace{-1em}
        \frac{
            \abs{
                \ang{f,\Phi_{p-r,q-\frac{r^2}{\sqrt{\lambda}},u,v}}
                \ang{
                    \overline{e\br{\frac{ar^3}{\lambda^{\frac{3}{2}}}}}\chi\br{\frac{a}{\sqrt{\lambda}}-w}\1_E,
                    \!\Psi_{p,q,u,v}
                }
            }
        }{
            \sqrt{\lambda}
            \ang{
                3w\frac{r^2}{\lambda}
                -2v\frac{r}{\sqrt{\lambda}}
                -u
            }^N
        }.
    \end{equation}
\end{lemma}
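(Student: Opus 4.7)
The plan is to execute Steps I and II of the LGC method. The key heuristic is that on each cell where \(t\approx r/\sqrt\lambda\) and \(a\approx w\sqrt\lambda\) with \(r,w\in\Z\), the cubic phase \(at^3\) is linear in the local coordinate \(\tilde t:=\sqrt\lambda\,t-r\) up to a bounded remainder (since \(|a\,r^{3-j}\tilde t^{j}/\lambda^{3/2}|\lesssim 1\) for \(j\ge 2\) under the hypothesis \(|a|\le 2\lambda\), \(r\lesssim\sqrt\lambda\), \(|\tilde t|\le 1\)), while the change of argument \((x-t,y-t^2)=(x-r/\sqrt\lambda-\tilde t/\sqrt\lambda,\; y-r^2/\lambda-2r\tilde t/\lambda-\tilde t^2/\lambda)\) is a translation of \((x,y)\) by \((r/\sqrt\lambda, r^2/\lambda)\) plus an \(O(1/\sqrt\lambda)\) perturbation -- exactly the scale of a Gabor frame at \(1/\sqrt\lambda\).

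First, I would introduce smooth partitions of unity \(1=\sum_{r\in\Z}\tilde\rho(\sqrt\lambda\,t-r)\) in the \(t\)-integration and \(1=\sum_{w\in\Z}\chi(a/\sqrt\lambda-w)\) on the \(\1_E\) side, with \(\tilde\rho,\chi\) supported in \([-1,1]\); then change variable to \(\tilde t\) (producing the Jacobian \(dt=d\tilde t/\sqrt\lambda\) which accounts for the \(1/\sqrt\lambda\) in the denominator of \eqref{eq_wp_rep}) and Taylor-expand
\begin{equation*}
  \frac{a(r+\tilde t)^3}{\lambda^{3/2}}
  =\frac{ar^3}{\lambda^{3/2}}
  +\frac{3ar^2\tilde t}{\lambda^{3/2}}
  +\frac{3ar\tilde t^2}{\lambda^{3/2}}
  +\frac{a\tilde t^3}{\lambda^{3/2}}.
\end{equation*}
The \(\tilde t\)-free factor \(e(ar^3/\lambda^{3/2})\) is conjugated onto the \(\1_E\) side, producing the factor \(\overline{e(ar^3/\lambda^{3/2})}\) appearing in \eqref{eq_wp_rep}. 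On the cell \(|a/\sqrt\lambda-w|\le 1\) the linear factor becomes \(e(3wr^2\tilde t/\lambda)\) modulo the bounded phase \(e(3(a/\sqrt\lambda-w)r^2\tilde t/\lambda)\), while the quadratic and cubic Taylor remainders are themselves bounded phases. All these bounded factors, together with the \(O(1/\sqrt\lambda)\) perturbation in the argument of \(f\), are absorbed into a single smooth compactly supported cutoff in \(\tilde t\) whose derivatives are uniformly bounded in \(r,w,a\).

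For Step II, I would apply a standard tight Gabor expansion of \(f\) at scale \(1/\sqrt\lambda\) and reindex \((p',q')\mapsto(p-r,\,q-r^2/\sqrt\lambda)\) so that the physical translation of \(f\) becomes an integer shift of Gabor indices. A direct computation then factors each translated wave packet as \(e(u(\sqrt\lambda x-p))\,e(v(\sqrt\lambda y-q))\) times a Schwartz envelope evaluated at \((\sqrt\lambda x-p-\tilde t,\sqrt\lambda y-q-2r\tilde t/\sqrt\lambda-\tilde t^2/\sqrt\lambda)\), times the \(\tilde t\)-modulation \(e(-u\tilde t-2rv\tilde t/\sqrt\lambda-v\tilde t^2/\sqrt\lambda)\). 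Combined with the retained curvature phase \(e(3wr^2\tilde t/\lambda)\), the total linear \(\tilde t\)-frequency is \(\Omega:=3wr^2/\lambda-2vr/\sqrt\lambda-u\), and repeated integration by parts in \(\tilde t\) -- using that \(v\tilde t^2/\sqrt\lambda\) is slowly varying (since the frequency support of \(\widehat f\subset[-2\lambda,2\lambda]^2\) forces \(|v|\lesssim\sqrt\lambda\)) -- yields the decay weight \(\ang{\Omega}^{-N}\). The remaining \((x,y)\)-dependence after \(\tilde t\)-integration is a unit-scale time-frequency localized function centered at \((p/\sqrt\lambda,q/\sqrt\lambda)\) with frequency \((u\sqrt\lambda,v\sqrt\lambda)\), which I absorb into a universal \(\Psi_{p,q,u,v}\); pairing it with \(\overline{e(ar^3/\lambda^{3/2})}\chi(a/\sqrt\lambda-w)\1_E\) produces the second Gabor coefficient in \eqref{eq_wp_rep}.

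The main obstacle I foresee is packaging the bounded Taylor remainders, the intra-cell variation \(a-w\sqrt\lambda\), and the perturbation of the argument of \(f\) into a single time-frequency localized function \(\Psi\) that does not depend on \(r,w,u,v\); this requires the \(\tilde t\)-derivatives of all the absorbed bounded factors to be uniformly bounded in the parameters so that the implicit constant in the \(\ang{\Omega}^{-N}\) decay is genuinely universal. A secondary subtlety is the careful treatment of the integer floor in the Gabor translations when reindexing \((p',q')\mapsto(p-r,\,q-r^2/\sqrt\lambda)\), which can create small rounding errors (e.g.\ a discrepancy of order \(1/\sqrt\lambda\) in the \(y\)-translation when \(r^2/\sqrt\lambda\) is not an integer) that must be shown to be harmless through the Schwartz decay of the envelope.
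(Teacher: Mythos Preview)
Your outline is correct and follows the same route as the paper: partition at scale $1/\sqrt\lambda$, Taylor-expand the cubic phase about $t=r/\sqrt\lambda$, extract the linear $\tilde t$-frequency $\Omega=3wr^2/\lambda-2vr/\sqrt\lambda-u$, and integrate by parts for the weight $\ang{\Omega}^{-N}$. For the obstacle you flag---obtaining a $\Psi$ independent of $r,w,u,v$---the paper's device is to Taylor-expand \emph{each} bounded exponential remainder (the intra-cell correction $a/\sqrt\lambda-w$, the quadratic and cubic $\tilde t$-terms, and the analogous $\xi,\eta$ corrections, ten factors in all) into convergent power series indexed by $\vec k\in\N^{10}$: this factors the $(r,v,w)$-dependence into bounded scalar coefficients $C_{\vec k}(\lambda)D_{\vec k}(r,v,w)$ that are summable in $\vec k$, leaving behind a fixed family $\Psi^{(\vec k)}$ with uniform time-frequency localization, and the paper works throughout on the multiplier side (partitioning also in $\xi,\eta$ and applying Parseval at the end) rather than expanding $f$ in a physical-side Gabor frame, which sidesteps the floor-reindexing issue you mention.
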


\begin{proof}
  For convenience, we start by restating \eqref{multform}--\eqref{mult} in our specific setting:
    \begin{equation*}
        \ang{
            \mathcal{C}^{\br{a}}f,
            \1_E
        }=
        \int_E
            \widehat{f}\br{\xi,\eta}
                m\br{\xi,\eta,a\br{x,y}}
                e\br{x\xi+y\eta}
        d\xi d\eta
        dx dy,
    \end{equation*}
    where the symbol is defined by the following oscillatory integral:
    \begin{equation*}
        m\br{\xi,\eta,a}:=
        \int
            e\br{at^3-\eta t^2-\xi t}
            \rho\br{t}
        dt.
    \end{equation*}
    Next, we choose a smooth even bump function \(0\leq \phi \leq \1_{\Br{-1,1}}\) and generate the partition of unity
    \begin{equation}\label{eq_part_uni}
        1=\sum_{r,u,v,w}\Tr_r\phi^2\br{\sqrt{\lambda}t}\Tr_u\phi^2\br{\frac{\xi}{\sqrt{\lambda}}}\Tr_v\phi^2\br{\frac{\eta}{\sqrt{\lambda}}}\Tr_w\phi^2\br{\frac{a}{\sqrt{\lambda}}}
    \end{equation}
     that we attach it to the symbol. Via a change of variable \(\tau:=\sqrt{\lambda}t-r\) and some algebraic manipulations, we obtain the following expression:
    \begin{align*}
        m\br{\xi,\eta,a}
        =&
        \frac{1}{\sqrt{\lambda}}
        \sum_{r,u,v,w}
        e\br{
            v\BR{-\frac{r^2}{\sqrt{\lambda}}}+
            a\frac{r^3}{\lambda^{\frac{3}{2}}}
            +\frac{\eta}{\sqrt{\lambda}}\left\lfloor -\frac{r^2}{\sqrt{\lambda}}\right\rfloor
            -\xi\frac{r}{\sqrt{\lambda}}
        }\nonumber\\
            \cdot &
            \int
            e\br{
                \br{
                    3w\frac{r^2}{\lambda}
                    -2v\frac{r}{\sqrt{\lambda}}
                    -u
                }
                \tau
            }
            \mathcal{E}\br{
                \tau,\xi,\eta,a,r,u,v,w
            }
            \rho\br{\frac{r+\tau}{\sqrt{\lambda}}}
            \phi^2\br{\tau}
        d\tau\nonumber\\
        \cdot &
        \phi^2\br{\frac{\xi}{\sqrt{\lambda}}-u}
        \phi^2\br{\frac{\eta}{\sqrt{\lambda}}-v}
        \phi^2\br{\frac{a}{\sqrt{\lambda}}-w},
    \end{align*}
    where \(\mathcal{E}(\cdot)\), obtained via a Taylor decomposition, stands for the expression:
    \begin{align*}
        \mathcal{E}
        \br{\cdot}
        = &
        e\br{
            -
            \br{
                \frac{\xi}{\sqrt{\lambda}}-u
            }\tau
        }
        e\br{
            -2\br{
                \frac{\eta}{\sqrt{\lambda}}-v
            }\frac{r}{\sqrt{\lambda}}\cdot\tau
        }
        e\br{
            3\br{
                \frac{a}{\sqrt{\lambda}}-w
            }\frac{r^2}{\lambda}\cdot\tau
        }\nonumber\\
        \cdot &
        e\br{
            -\lambda^{-\frac{1}{2}}
                \br{
                    \frac{\eta}{\sqrt{\lambda}}-v
                }
            \cdot\tau^2
        }
        e\br{
            -\frac{
                v
            }{
                \sqrt{\lambda}
            }
            \cdot\tau^2
        }
        e\br{
            3\lambda^{-\frac{1}{2}}\br{\frac{a}{\sqrt{\lambda}}-w}\cdot\frac{r}{\sqrt{\lambda}}\cdot\tau^2
        }
        e\br{
            3\cdot\frac{w}{\sqrt{\lambda}}\cdot\frac{r}{\sqrt{\lambda}}\cdot\tau^2
        }\nonumber\\
        \cdot &
        e\br{
            \lambda^{-1}\cdot
            \br{
                \frac{a}{\sqrt{\lambda}}-w
            }\cdot\tau^3
        }
        e\br{
            \lambda^{-\frac{1}{2}}\cdot
            \frac{w}{\sqrt{\lambda}}\cdot\tau^3
        }
        e\br{
            \br{\frac{\eta}{\sqrt{\lambda}}-v}
            \BR{-\frac{r^2}{\sqrt{\lambda}}}
        }.\nonumber
    \end{align*}
    On the one hand, by design:
    \begin{equation*}
        \abs{\tau},\;
        \abs{
            \frac{\xi}{\sqrt{\lambda}}-u
        },\;
        \abs{
            \frac{\eta}{\sqrt{\lambda}}-v
        },\;
        \abs{
            \frac{a}{\sqrt{\lambda}}-w
        },\;
        \abs{
            \BR{-\frac{r^2}{\sqrt{\lambda}}}
        }\leq 1.\nonumber
    \end{equation*}
    On the other hand, since \(\abs{\xi},\abs{\eta},\abs{a}\leq 2\lambda\) and \(\frac{1}{2}\leq \frac{r+\tau}{\sqrt{\lambda}}\leq 2\), we further deduce:
    \begin{equation*}
        \abs{
            \frac{u}{\sqrt{\lambda}}
        },\;
        \abs{
            \frac{v}{\sqrt{\lambda}}
        },\;
        \abs{
            \frac{w}{\sqrt{\lambda}}
        }
        \leq 2+\frac{1}{\sqrt{\lambda}}\leq 3,\quad
        \frac{1}{3}
        \leq
        \frac{1}{2}-\frac{1}{\sqrt{\lambda}}
        \leq
        \frac{r}{\sqrt{\lambda}}
        \leq 2+\frac{1}{\sqrt{\lambda}}
        \leq 3.\nonumber
    \end{equation*}
    Using Taylor expansions for all the ten exponential terms appearing in $\mathcal{E}
        \br{\cdot}$ and making the notations $\vec{k}=(k_1,\ldots, k_{10})$ and \(\vec{k}!:=\prod_{j=1}^{10}\br{k_j!}\) we deduce
    \begin{align*}
        \mathcal{E}
        \br{\cdot}
        &=
        \sum_{\vec{k}}
            \frac{
                \br{-1}^{k_1+k_2+k_4+k_5}
                2^{k_2}
                3^{k_3+k_6+k_7}
                \lambda^{-\frac{k_4+k_6+2k_8+k_9}{2}}
                \br{2\pi i}^{\sum_{j=1}^{10}k_j}
            }{\vec{k}!}
            \BR{-\frac{r^2}{\sqrt{\lambda}}}^{k_{10}}
            \br{
                \frac{r}{\sqrt{\lambda}}
            }^{
                k_2+2k_3+k_6+k_7
            }
            \br{
                \frac{v}{\sqrt{\lambda}}
            }^{
                k_5
            } \nonumber\\
        \cdot &
            \br{
                \frac{w}{\sqrt{\lambda}}
            }^{
                k_7+k_9
            }
            \br{
            \frac{\xi}{\sqrt{\lambda}}-u}^{k_1}
        \br{
            \frac{\eta}{\sqrt{\lambda}}-v
        }^{k_2+k_4+k_{10}}
        \br{
            \frac{a}{\sqrt{\lambda}}-w
        }^{k_3+k_6+k_8}
        \tau^{k_1+k_2+k_3+2k_4+2k_5+2k_6+2k_7+3k_8+3k_9}.
    \end{align*}
  With these, setting
    \begin{align*}
        &C_{\vec{k}}\br{\lambda}:=
        \br{\vec{k}!}^{-\frac{1}{4}}
        \br{-1}^{k_1+k_2+k_4+k_5}
        2^{k_2}
        3^{k_3+k_6+k_7}
        \lambda^{-\frac{k_4+k_6+2k_8+k_9}{2}}\br{2\pi i}^{\sum_{j=1}^{10}k_j}\,,\nonumber\\
        &D_{\vec{k}}\br{r,v,w}:=
        \BR{-\frac{r^2}{\sqrt{\lambda}}}^{k_{10}}
        \br{\vec{k}!}^{-\frac{1}{4}}
        \br{
            \frac{r}{\sqrt{\lambda}}
        }^{
            k_2+2k_3+k_6+k_7
        }
        \br{
            \frac{v}{\sqrt{\lambda}}
        }^{
            k_5
        }
        \br{
            \frac{w}{\sqrt{\lambda}}
        }^{
            k_7+k_9}\,,
        \nonumber\\
        &\widehat{\Psi}^{\br{\vec{k}}}\br{z_1,z_2}:=
        \frac{z_1^{k_1}z_2^{k_2+k_4+k_{10}}}{\br{\vec{k}!}^\frac{1}{4}}\phi\br{z_1}\phi\br{z_2},\quad
        \chi^{\br{\vec{k}}}\br{z}:=z^{k_3+k_6+k_8}\phi^2\br{z},
        \nonumber\\
        &\phi^{\br{\vec{k}}}\br{z}:=\frac{z^{k_1+k_2+k_3+2k_4+2k_5+2k_6+2k_7+3k_8+3k_9}}{\br{\vec{k}!}^{\frac{1}{4}}}\phi^2\br{z}\,,\nonumber
    \end{align*}
    we deduce

    \begin{align*}
        \ang{
            \mathcal{C}^{\br{a}}f,\1_E
        }
        = &
        \sum_{{r,u,v,w}\atop{\vec{k}}}
        \frac{C_{\vec{k}}\br{\lambda}}{\sqrt{\lambda}}
        D_{\vec{k}}\br{r,v,w}
        e\br{v\BR{-\frac{r^2}{\sqrt{\lambda}}}}
        \int
            e\br{
                \br{
                    3w\frac{r^2}{\lambda}
                    -2v\frac{r}{\sqrt{\lambda}}
                    -u
                }
                \tau
            }
            \rho\br{\frac{r+\tau}{\sqrt{\lambda}}}
            \phi^{\br{\vec{k}}}\br{\tau}
        d\tau\nonumber\\
        \cdot &
        \int
            e\br{\frac{\eta}{\sqrt{\lambda}}
            \cdot
            \left\lfloor
                -\frac{r^2}{\sqrt{\lambda}}
            \right\rfloor
            -\frac{\xi}{\sqrt{\lambda}}\cdot r
            }
            \widehat{f}\br{\xi,\eta}
            \phi\br{\frac{\xi}{\sqrt{\lambda}}-u}
            \phi\br{\frac{\eta}{\sqrt{\lambda}}-v}\nonumber
            \\
            \cdot &
            \br{
                e\br{
                    \frac{ar^3}{\lambda^{\frac{3}{2}}}
                }
                \chi^{\br{\vec{k}}}\br{\frac{a}{\sqrt{\lambda}}-w}\1_E
            }^{\vee}
            \br{\xi,\eta}
            \widehat{\Psi}^{\br{\vec{k}}}\br{
                \frac{\xi}{\sqrt{\lambda}}-u,
                \frac{\eta}{\sqrt{\lambda}}-v
            }
        d\xi d\eta.\nonumber
    \end{align*}
    For the last integral on the compact interval
    $\bR{
            \sqrt{\lambda}\br{u-1},\sqrt{\lambda}\br{u+1}
        }\times
        \bR{
            \sqrt{\lambda}\br{v-1},\sqrt{\lambda}\br{v+1}
        }$
    we apply Parseval in order to relate the integral to the Fourier coefficients of the functions therein

    \begin{align}\label{discmod}
        \ang{
            \mathcal{C}^{\br{a}}f,\1_E
        }
        =
        \sum_{{r,u,v,w}\atop{\vec{k}}}
        \frac{C_{\vec{k}}\br{\lambda}}{\sqrt{\lambda}}
        D_{\vec{k}}\br{r,v,w}
        e\br{v\BR{-\frac{r^2}{\sqrt{\lambda}}}}
        \int
            e\br{
                \br{
                    3w\frac{r^2}{\lambda}
                    -2v\frac{r}{\sqrt{\lambda}}
                    -u
                }
                \tau
            }
            \rho\br{\frac{r+\tau}{\sqrt{\lambda}}}
            \phi^{\br{\vec{k}}}\br{\tau}
        d\tau\nonumber\\
        \cdot
        \sum_{\substack{
            p,q\\
            \sigma\in\BR{0,1}^2
        }}
        \ang{
            f,
            \br{\Tr_{\frac{\sigma_1}{2}}\widecheck{\phi}\otimes\Tr_{\frac{\sigma_2}{2}}\widecheck{\phi}}_{p-r,q-\frac{r^2}{\sqrt{\lambda}},u,v}
        }\cdot
        \overline{
            \ang{
                e\br{
                    -\frac{ar^3}{\lambda^{\frac{3}{2}}}
                }
                \chi^{\br{\vec{k}}}\br{\frac{a}{\sqrt{\lambda}}-w}\1_E,
                \br{\Tr_{\frac{\sigma_1}{2}}\otimes\Tr_{\frac{\sigma_2}{2}}\Psi^{\br{\vec{k}}}}_{p,q,u,v}
            }
        }.
    \end{align}
    Now repeated integration by parts gives the decay estimate
    \begin{equation}\label{decy}
        \abs{
            \int
                e\br{
                    \br{
                        3w\frac{r^2}{\lambda}
                        -2v\frac{r}{\sqrt{\lambda}}
                        -u
                    }
                    \tau
                }
                \rho\br{\frac{r+\tau}{\sqrt{\lambda}}}
                \phi^{\br{\vec{k}}}\br{\tau}
            d\tau
        }
        \lesssim
        \ang{
            3w\frac{r^2}{\lambda}
            -2v\frac{r}{\sqrt{\lambda}}
            -u
        }^{-N}
        \Vert\phi^{\br{\vec{k}}}\Vert_{C^N}\,,
    \end{equation}
     where \(\Vert\phi^{\br{\vec{k}}}\Vert_{C^N}\lesssim 1\) with the implicit constant only depending on $N$ and the function \(\rho\).

  Finally, using the $\vec{k}$-summability of the coefficients $C_{\vec{k}}$ and $D_{\vec{k}}$ in \eqref{discmod} together with the decaying estimate \eqref{decy} we conclude that \eqref{eq_wp_rep} holds.
\end{proof}

With these, via the above lemma, we reduced our problem to estimating the model form:
\begin{equation}\label{linwavemodel}
    \Lambda\br{F,G}:=
        \mathop{\mathbb{E}}_{
        \frac{\sqrt{\lambda}}{3}
        \leq r\leq 3
        \sqrt{\lambda}
        }
    \sum_{\substack{p,q\\
        u,v,w
        }}
        \frac{
            F_{p-r,q-\frac{r^2}{\sqrt{\lambda}},u,v}
            G_{p,q,r,u,v,w}
        }{
            \ang{
                3w\frac{r^2}{\lambda}
                -2v\frac{r}{\sqrt{\lambda}}
                -u
            }^N
        }
\end{equation}
where here \(F,G\) represent the collection of Gabor coefficients:
\begin{equation}\label{Gabcoef}
    F_{p,q,u,v}:=\abs{
        \ang{f,\Phi_{p,q,u,v}}
    },\quad
    G_{p,q,r,u,v,w}:=
    \abs{
        \ang{
            \overline{e\br{\frac{ar^3}{\lambda^{\frac{3}{2}}}}}\chi\br{\frac{a}{\sqrt{\lambda}}-w}\1_E,
            \Psi_{p,q,u,v}
        }
    }.
\end{equation}
Specifically, our Theorem \ref{thm_main_loc} reduces now to proving the following estimate:
\begin{equation}\label{key}
    \Lambda\br{F,G}\lesssim\lambda^{-\sigma}\nrm{f}_{L^2}\abs{E}^{\frac{1}{2}}.
\end{equation}

\subsection{A sparse--uniform dichotomy}\label{SpUnifDich}

In this section we analyze the main expression \eqref{linwavemodel} depending on the information carried by the Gabor coefficients defined in \eqref{Gabcoef}. We will split our section in four subsections:
\begin{itemize}
\item a preliminary subsection in which we introduce the main definitions and perform the sparse-uniform decomposition
$$\Lambda=: \Lambda_{SU}\,+\, \Lambda_{UU}\,+\,\Lambda_{SS}\,;$$
\item  in the second subsection we treat of the sparse-uniform component $\Lambda_{SU}$;
\item  in the third subsection we deal with the uniform-uniform component $\Lambda_{UU}$;
\item  in the last subsection  we handle the sparse-sparse component $\Lambda_{SS}$ assuming that the level set estimate--treated in Section \ref{LevSet}--holds.
\end{itemize}

\subsubsection{Preliminaries: the main decomposition}

Our reasonings in this subsection will focus on the information encapsulated within the Gabor coefficients in \eqref{Gabcoef}. We start with the following

\begin{remark} The lack of control of the smoothness degree for the function $a$ implies a rough oscillatory behavior of the term \(\overline{e\br{\frac{ar^3}{\lambda^{\frac{3}{2}}}}}\) in the definition of the Gabor coefficient $G_{p,q,r,u,v,w}$ in \eqref{Gabcoef}. As a consequence, one can not expect a good (pointwise) control in the $r-$parameter for the $G$ terms. Moreover, the $(u,v)$-frequency dependence of $G_{p,q,r,u,v,w}$ can only be exploited via the Parseval relation.
All of the above suggest an analysis of the terms $G_{p,q,r,u,v,w}$ that involve directly only the spatial parameters $p,q$ together with the frequency parameter $w$. To be more specific, for \(w\in\Z\), we define
    \begin{equation*}
        E_w:=\set{
            \br{x,y}\in E
        }{
            a\br{x,y}\in \sqrt{\lambda}[w-1,w+1]}.
    \end{equation*}

 Since by construction we have $\abs{\overline{e\br{\frac{ a r^3}{\lambda^{\frac{3}{2}}}}}\chi\br{\frac{a}{\sqrt{\lambda}}-w}\1_E}\leq \1_{E_w}$, Lemma \ref{prop_Gabor_bds} applied to \(G\) now reads
\begin{equation}\label{eq_G_bd_2_mu}
    G_{p,q,r,u,v,w}\leq \sqrt{\lambda}\,\mu_{p,q}\br{E_w},\quad
    \br{
        \sum_{u,v}
        G_{p,q,r,u,v,w}^2
    }^{\frac{1}{2}}
    \lesssim
    \mu_{p,q}^{\frac{1}{2}}\br{E_w}.
\end{equation}

\end{remark}

With these being said, we introduce now the following
\begin{definition}
    Let \(m,n\in \N\) and \(p,q\in\R\). We then introduce the following sets:
    \begin{align}%
        \F^{\br{m}}_{p,q}:= &
        \set{\br{u,v}\in\Z^2}{
            2^{m-1}
            \nrm{f}_{L^2\br{d\mu_{p,q}}}
            /\sqrt{\lambda}
            <
            F_{p,q,u,v}
            \leq
            2^m
            \nrm{f}_{L^2\br{d\mu_{p,q}}}
            /\sqrt{\lambda}
        }\label{eq_Fpq_set}\\
        \F^{\br{0}}_{p,q}:= &
        \set{\br{u,v}\in\Z^2}{
            F_{p,q,u,v}
            \leq
            \nrm{f}_{L^2\br{d\mu_{p,q}}}
            /\sqrt{\lambda}
        }
        \nonumber\\
        \mathcal{G}^{\br{n}}_{p,q}:= &
        \set{
            w\in\Z
        }{
            2^{n-1}
            \mu_{p,q}\br{E}/\sqrt{\lambda}<
            \mu_{p,q}\br{E_w}\leq 2^{n}
            \mu_{p,q}\br{E}/\sqrt{\lambda}
        }\label{eq_Gpq_set}\\
        \mathcal{G}^{\br{0}}_{p,q}:= &
        \set{
            w\in\Z
        }{
            \mu_{p,q}\br{E_w}\leq
            \mu_{p,q}\br{E}/\sqrt{\lambda}
        }
        .\nonumber
    \end{align}%
    Also, we truncate the Gabor coefficients accordingly:
    \begin{equation*}
        F^{\br{m}}_{p,q,u,v}:=
        \1_{\F^{\br{m}}_{p,q}}\br{u,v}F_{p,q,u,v},\quad
        G^{\br{n}}_{p,q,r,u,v,w}:=
        \1_{\mathcal{G}^{\br{n}}_{p,q}}\br{w}G_{p,q,r,u,v,w}.
    \end{equation*}
\end{definition}

From the definition above, we remark the following:

On the one hand, from \eqref{eq_f_a_E_cond}, we have that the natural range for our Gabor coefficients is
    \begin{equation*}
        F_{p,q,u,v}\neq 0\implies \abs{u},\abs{v}\leq 3\sqrt{\lambda},\quad
        G_{p,q,r,u,v,w}\neq 0\implies
        \abs{w}\leq 3\sqrt{\lambda}\,,
    \end{equation*}
which immediately implies the trivial bound
    \begin{equation*}
        \# \F^{\br{m}}_{p,q}\lesssim \lambda,\quad
        \# \mathcal{G}^{\br{n}}_{p,q}\lesssim
        \sqrt{\lambda}.
    \end{equation*}

On the other hand,  for \(m,n\in\N\), we can refine the above estimate through Chebyshev inequality:
    \begin{equation}\label{eq_cheby}
        \# \F^{\br{m}}_{p,q}\lesssim \frac{\lambda}{2^{2m}},\quad
        \# \mathcal{G}^{\br{n}}_{p,q}\lesssim
        \frac{\sqrt{\lambda}}{2^n}.
    \end{equation}
All of the above imply that the sets $\F^{\br{m}}_{p,q}, \mathcal{G}^{\br{n}}_{p,q}$ are nontrivial only when \(0\leq m,n\lesssim \log\lambda\).

With all these settled we are now ready for the key sparse-uniform decomposition of our form
\begin{equation*}
    \Lambda\br{F,G}=
    \sum_{\br{m,n}\in \J}
        \Lambda\br{F^{\br{m}},G^{\br{n}}}\,,
\end{equation*}
where here, for notational convenience, we set
\begin{equation*}
    \J:=\set{\br{m,n}\in \N^2}{m,n\lesssim \log\lambda}\,.
\end{equation*}

Indeed, the \emph{sparse-uniform decomposition} of $\Lambda$ is given by
\begin{equation*}
\Lambda=: \Lambda_{SU}\,+\, \Lambda_{UU}\,+\,\Lambda_{SS}\,,
\end{equation*}
where, for some small constant \(\epsilon>0\) (up to log losses one can take $\ep=\sigma$ with $\sigma$ appearing in \eqref{key}), we have
\begin{itemize}
\item the \emph{sparse-uniform component} $\Lambda_{SU}$ is given by
\begin{equation*}
    \Lambda_{SU}\br{F,G}=
    \sum_{\br{m,n}\in \J_{S,U}}
        \Lambda\br{F^{\br{m}},G^{\br{n}}}\,,
\end{equation*}
with $\J_{S,U}:=\set{\br{m,n}\in\J}{2^m\gtrsim 2^n\lambda^\epsilon}$;
\smallskip

\item the \emph{uniform-uniform component} $\Lambda_{UU}$ is given by
\begin{equation*}
    \Lambda_{UU}\br{F,G}=
    \sum_{\br{m,n}\in \J_{U,U}}
        \Lambda\br{F^{\br{m}},G^{\br{n}}}\,,
\end{equation*}
with $\J_{U,U}:=\set{\br{m,n}\in\J\setminus\J_{S,U}}{2^{2m}\lesssim
            2^n\lambda^{\frac{1}{2}-4\epsilon}}$;
\smallskip

\item the \emph{sparse-sparse component} $\Lambda_{SS}$ is given by
\begin{equation*}
    \Lambda_{SS}\br{F,G}=
    \sum_{\br{m,n}\in \J_{S,S}}
        \Lambda\br{F^{\br{m}},G^{\br{n}}}\,,
\end{equation*}
with $\J_{S,S}:=
    \set{\br{m,n}\in\J\setminus(\J_{S,U}\cup \J_{U,U})}{2^m,2^n \gtrsim \lambda^{\frac{1}{2}-6\epsilon}}$.
\end{itemize}

\subsubsection{Treatment of the sparse-uniform component $\Lambda_{SU}$} This is the easiest to treat component of $\Lambda$. At the heuristic level, the control on $\Lambda_{SU}$ is a consequence of the fact that the set of Gabor coefficients within $F$ are sparse (small cardinality) relative to the set of Gabor coefficients in $G$.

The concrete procedure follows the reasonings below: for a fixed \(\br{m,n}\in\J_{S,U}\), using \eqref{meascontr}, \eqref{eq_cheby} and the definitions given in \eqref{eq_Fpq_set} and \eqref{eq_Gpq_set}, we have
\begin{equation}\label{FGestimSU}
F^{\br{m}}_{p-r,q-\frac{r^2}{\sqrt{\lambda}},u,v}G^{\br{n}}_{p,q,r,u,v,w}
    \lesssim \1_{\F^{\br{m}}_{p-r,q-\frac{r^2}{\sqrt{\lambda}}}}\br{u,v}
    \cdot
    \frac{2^{m+n}}{\lambda}\nrm{f}_{L^2\br{d\mu_{p-r,q-\frac{r^2}{\sqrt{\lambda}}}}}\mu_{p,q}^{\frac{1}{2}}\br{E}.
\end{equation}
From \eqref{FGestimSU} we further deduce that the expression $\Lambda\br{F^{\br{m}},G^{\br{n}}}$ is bounded from above by
\begin{align*}
    &
    \mathop{\mathbb{E}}_{r\eqsim \sqrt{\lambda}}
    \sum_{p,q}  \left(\frac{2^{m+n}}{\lambda}
    \nrm{f}_{L^2\br{d\mu_{p-r,q-\frac{r^2}{\sqrt{\lambda}}}}}
    \mu_{p,q}^{\frac{1}{2}}\br{E}\right)
    \cdot \left(
    \sum_{\br{u,v}\in \F^{\br{m}}_{p-r,q-\frac{r^2}{\sqrt{\lambda}}}}
    \sum_{w}\ang{3w\frac{r^2}{\lambda}-2v\frac{r}{\sqrt{\lambda}}-v}^{-N}\right)\nonumber\\
    \lesssim &
    \mathop{\mathbb{E}}_{r\eqsim \sqrt{\lambda}}
    \sum_{p,q} 2^{n-m}
    \nrm{f}_{L^2\br{d\mu_{p-r,q-\frac{r^2}{\sqrt{\lambda}}}}}
    \mu_{p,q}^{\frac{1}{2}}\br{E}
\end{align*}
Finally, Cauchy-Schwarz inequality applied on \(p,q\) variables gives:
\begin{equation}\nonumber
    \Lambda\br{F^{\br{m}},G^{\br{n}}}
    \lesssim
    2^{n-m}\nrm{f}_{L^2}\abs{E}^{\frac{1}{2}}\lesssim \lambda^{-\epsilon}\nrm{f}_{L^2}\abs{E}^{\frac{1}{2}}\,,
\end{equation}
and hence
\begin{equation}\label{SU}
    \Lambda_{SU}\br{F,G}\lesssim \lambda^{-\epsilon}\log^2\lambda\nrm{f}_{L^2}\abs{E}^{\frac{1}{2}}\,.
\end{equation}

\subsubsection{Treatment of the uniform-uniform component $\Lambda_{UU}$}

Due to the uniform nature of Gabor coefficients, in this new context one has to rely in a key manner on the time-frequency correlation weight factor \eqref{weightcurv}. The following simple but very useful lemma---whose proof is left to the reader---will be used extensively to extract information from the correlation factor.
\begin{lemma}\label{lem_jap_mul_diff}
For any \(x,y\in\R^D\) and \(a,b\in\R\) with $(a,b)\not=(0,0)$, we have the following estimate:
    \begin{equation*}
        \ang{x}^{-1}\ang{y}^{-1}\leq \ang{\frac{ax+by}{\abs{a}+\abs{b}}}^{-1}\br{\ang{x}^{-1}+\ang{y}^{-1}}.
    \end{equation*}
\end{lemma}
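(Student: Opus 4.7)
The plan is to first reduce the stated inequality to a cleaner equivalent form by clearing denominators. Multiplying both sides by $\ang{x}\ang{y}\ang{\frac{ax+by}{|a|+|b|}}$, the inequality becomes equivalent to showing
\begin{equation*}
    \ang{\tfrac{ax+by}{|a|+|b|}} \leq \ang{x} + \ang{y}\,.
\end{equation*}
So the whole lemma reduces to a sub-additivity statement for the Japanese bracket under a normalized linear combination.

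Next, I would observe that writing $\alpha:=|a|/(|a|+|b|)$ and $\beta:=|b|/(|a|+|b|)$, the vector $\frac{ax+by}{|a|+|b|}=\sgn(a)\,\alpha\,x+\sgn(b)\,\beta\,y$ is, up to signs, a convex combination of $x$ and $y$. In particular the triangle inequality gives the pointwise bound $\bigl|\tfrac{ax+by}{|a|+|b|}\bigr|\leq \alpha|x|+\beta|y|\leq |x|+|y|$. Consequently it suffices to establish the sharper estimate
\begin{equation*}
    \bigl(1+(|x|+|y|)^2\bigr)^{1/2}\leq \ang{x}+\ang{y}\,,
\end{equation*}
which in turn I would handle by squaring both sides. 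After expansion, the desired inequality collapses to $2|x||y|\leq 1+2\ang{x}\ang{y}$.

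The last step is then an elementary Cauchy--Schwarz type observation: the identity $(1+|x|^2)(1+|y|^2)=(1+|x||y|)^2+(|x|-|y|)^2$ gives $\ang{x}\ang{y}\geq 1+|x||y|$, from which the required bound is immediate (indeed with room to spare).

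There is no genuine obstacle here—the entire argument is an elementary manipulation—so my proof plan is essentially just these three reductions: reduce to the sub-additivity of $\ang{\cdot}$, reduce further via the triangle inequality on the convex-combination factor, and close with the Cauchy--Schwarz-type lower bound on $\ang{x}\ang{y}$. The only mild care is to ensure the case $(a,b)\neq(0,0)$ guarantees $|a|+|b|>0$ so the normalization is well defined; all other manipulations are standard.
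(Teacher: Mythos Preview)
Your proof is correct. The paper actually leaves this lemma to the reader, so there is no proof to compare against; your three-step reduction (clearing denominators to a sub-additivity statement, applying the triangle inequality to the normalized linear combination, and finishing by squaring and invoking $\ang{x}\ang{y}\geq |x||y|$) is exactly the natural elementary argument one would expect.
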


Fixing \(\br{m,n}\in\J_{U,U}\), the first step consists of distributing via Cauchy-Schwarz the time-frequency correlation weight on each of the two Gabor coefficient terms:
\begin{equation}\label{eq_Gpqw_term}
    \Lambda\br{F^{\br{m}},G^{\br{n}}}\leq
    \sum_{p,q}
    \sum_{w\in\mathcal{G}_{p,q}^{\br{n}}}
    \br{
        \mathop{\mathbb{E}}_{r\eqsim\sqrt{\lambda}}
        \sum_{
            u,v
        }
        \frac{
            \br{F^{\br{m}}_{p-r,q-\frac{r^2}{\sqrt{\lambda}},u,v}}^2
        }{\ang{3w\frac{r^2}{\lambda}-2v\frac{r}{\sqrt{\lambda}}-u}^N}
}^{\frac{1}{2}}
    \cdot
    \br{
       \mathop{\mathbb{E}}_{r\eqsim\sqrt{\lambda}}
        \sum_{
            \substack{u,v:\\
            \abs{v}\leq 3\sqrt{\lambda}
        }}
        \frac{
            G_{p,q,r,u,v,w}^2
        }{\ang{3w\frac{r^2}{\lambda}-2v\frac{r}{\sqrt{\lambda}}-u}^N}
    }^{\frac{1}{2}}.
\end{equation}
We start our analysis by focussing first on the second term in \eqref{eq_Gpqw_term}. For a fixed \(w\) and \(\frac{\sqrt{\lambda}}{3}
\leq r \leq
3\sqrt{\lambda}\),  consider the following layer cake decomposition with respect to \(\ang{3w\frac{r^2}{\lambda}-2v\frac{r}{\sqrt{\lambda}}-u}^{-N}\):
\begin{equation}\label{eq_sum_k_1_G2}
    \sum_{
        \substack{u,v:\\
        \abs{v}\leq 3\sqrt{\lambda}
    }}
    \frac{
        G_{p,q,r,u,v,w}^2
    }{\ang{3w\frac{r^2}{\lambda}-2v\frac{r}{\sqrt{\lambda}}-u}^N}
    \lesssim
    \sum_{k}
        \ang{k}^{-N}
        \sum_{
            \substack{u,v:\\
            \abs{v}\leq 3\sqrt{\lambda}
        }}
        \1_{\Br{-1,1}}\br{
            3w\frac{r^2}{\lambda}-2v\frac{r}{\sqrt{\lambda}}-u-k
        }
            G_{p,q,r,u,v,w}^2.
\end{equation}
Next, recalling the notation in Section \ref{Not}, we introduce the frequency projection operator
\begin{equation*}
    \widehat{\pi_k f}\br{\xi,\eta}:=\varphi\br{\frac{\eta}{4\lambda}}\varphi\bigg(
        \frac{
            3w\frac{r^2}{\lambda}-2\frac{\eta}{\sqrt{\lambda}}\cdot\frac{r}{\sqrt{\lambda}}-\frac{\xi}{\sqrt{\lambda}}-k
        }{
            8
        }
    \bigg)\widehat{f}\br{\xi,\eta}.
\end{equation*}
By construction, we have:
\begin{equation*}
    G_{p,q,r,u,v,w}=
    \abs{
        \ang{
            \pi_k
            \br{\overline{e\br{\frac{ar^3}{\lambda^{\frac{3}{2}}}}}\chi\br{\frac{a}{\sqrt{\lambda}}-w}\1_E},
        \Psi_{p,q,u,v}
        }
    }=:G_{p,q,r,u,v,w,k}
\end{equation*}
for all \(u,v\) satisfying \(\abs{3w\frac{r^2}{\lambda}-2v\frac{r}{\sqrt{\lambda}}-u-k}\leq 1\). This allows us to rewrite \eqref{eq_sum_k_1_G2} as
\begin{equation*}
    \eqref{eq_sum_k_1_G2}= \sum_{k}
        \ang{k}^{-N}
        \sum_{\substack{
            u,v:\\
            \abs{v}\leq 3\sqrt{\lambda}
        }}
        \1_{\Br{-1,1}}\br{
            3w\frac{r^2}{\lambda}-2v\frac{r}{\sqrt{\lambda}}-u-k
        }G_{p,q,r,u,v,w,k}^2
\end{equation*}
and further dominate the above trivially by
\begin{equation*}
    \sum_{k}
        \ang{k}^{-N}
        \sum_{u,v}
        G_{p,q,r,u,v,w,k}^2.
\end{equation*}
At this point we apply Lemma \ref{prop_Gabor_bds} to further dominate the above expression by
\begin{equation}\label{eq_sum_k_proj}
    \sum_{k}
        \ang{k}^{-N}
        \nrm{
            \frac{
                \pi_k
                \br{\overline{e\br{\frac{ar^3}{\lambda^{\frac{3}{2}}}}}\chi\br{\frac{a}{\sqrt{\lambda}}-w}\1_E}\br{x,y}
            }{
                \ang{\sqrt{\lambda}x-p}^{9N}
                \ang{\sqrt{\lambda}y-q}^{9N}
            }
        }^2_{L^2\br{dxdy}}.
\end{equation}
Once here, we take a brief detour in our exposition in order to estimate \(\pi_k\) from the physical side: more precisely, we write \(\pi_k f= \mathcal{P}_k\ast f\) and compute the kernel
\begin{equation*}
    \mathcal{P}_k\br{x,y}:=
    \int
        \varphi\br{\frac{\eta}{4\lambda}}
        \varphi\bigg(
            \frac{
                3w\frac{r^2}{\lambda}-2\frac{\eta}{\sqrt{\lambda}}\cdot\frac{r}{\sqrt{\lambda}}-\frac{\xi}{\sqrt{\lambda}}-k
            }{
                8
            }
        \bigg)
        e\br{x\xi+y\eta}
    d\xi d\eta.
\end{equation*}
Direct calculation shows that:
\begin{equation}\nonumber
    \abs{
        \mathcal{P}_k\br{x,y}
    }
    \lesssim \lambda^{\frac{3}{2}}
    \abs{
        \widehat{\varphi}\br{
            8\sqrt{\lambda}x
        }
        \widehat{\varphi}\br{
            4\sqrt{\lambda}\br{2rx-\sqrt{\lambda}y}
        }
    }
    \lesssim
    \lambda^{\frac{3}{2}}
    \ang{
        \sqrt{\lambda}x
    }^{-3N}
    \ang{
        \sqrt{\lambda}\br{2rx-\sqrt{\lambda}y}
    }^{-3N}.
\end{equation}
Consequently,  uniform in \(k\in\Z\), the following estimate holds:
\begin{align}\label{projupb}
    &
    \abs{
        \pi_k
        \br{\overline{e\br{\frac{ar^3}{\lambda^{\frac{3}{2}}}}}\chi\br{\frac{a}{\sqrt{\lambda}}-w}\1_E}\br{x,y}
    }\nonumber\\
    &\qquad\qquad\lesssim
    \lambda^{\frac{3}{2}}
    \int_{E_w}
        \ang{
            \sqrt{\lambda}\br{x-x'}
        }^{-3N}
        \ang{
            \sqrt{\lambda}\br{2r\br{x-x'}-\sqrt{\lambda}\br{y-y'}}
        }^{-3N}
    dx'dy'.
\end{align}
Returning now to \eqref{eq_sum_k_proj}, we apply \eqref{projupb}, sum over \(k\in\Z\), and finally expand the \(\nrm{\cdot}_{L^2}^2\) in order to deduce
\begin{align*}
    \eqref{eq_sum_k_proj}\lesssim &
    \lambda^3
    \int_{\R^2}
        \int_{E_w^2}
            \prod_{j=0,1}
                \ang{
                    \sqrt{\lambda}\br{x-x_j}
                }^{-3N}
                \ang{
                    \sqrt{\lambda}\br{2r\br{x-x_j}-\sqrt{\lambda}\br{y-y_j}}
                }^{-3N}\\
    &\hspace{4.5em}
    dx_0dy_0dx_1dy_1
    \cdot
    \ang{
        \sqrt{\lambda}x-p
    }^{-18N}
    \ang{
        \sqrt{\lambda}y-q
    }^{-18N}
    dxdy.
\end{align*}
We now apply Lemma \ref{lem_jap_mul_diff} multiple times in order to
\begin{itemize}
    \item create the term \(\ang{\sqrt{\lambda}x_j-p}^{-N}\) via the estimate
    \begin{equation}\nonumber
            \ang{\sqrt{\lambda}\br{x-x_j}}^{-N}\ang{\sqrt{\lambda}x-p}^{-N}
        \lesssim
        \ang{\sqrt{\lambda}x_j-p}^{-N}.
    \end{equation}
    \item create the term \(\ang{\sqrt{\lambda}y_j-q}^{-N}\) via the estimate
    \begin{align*}
        &\ang{\sqrt{\lambda}\br{x-x_j}}^{-N}
        \ang{
            \sqrt{\lambda}\br{2r\br{x-x_j}-\sqrt{\lambda}\br{y-y_j}}
        }^{-N}
        \ang{\sqrt{\lambda}y-q}^{-N}\\
        \lesssim &
        \ang{
            \sqrt{\lambda}\br{y-y_j}
        }^{-N}
        \ang{\sqrt{\lambda}y-q}^{-N}
        \lesssim
        \ang{\sqrt{\lambda}y_j-q}^{-N}.
    \end{align*}
    \item create the term \(\ang{
        \sqrt{\lambda}\br{2r\br{x_1-x_0}-\sqrt{\lambda}\br{y_1-y_0}}
    }^{-N}\) via the estimate
    \begin{equation}\nonumber
        \prod_{j=0,1}
            \ang{
                \sqrt{\lambda}\br{2r\br{x-x_j}-\sqrt{\lambda}\br{y-y_j}}
            }^{-N}
        \lesssim
        \ang{
            \sqrt{\lambda}\br{2r\br{x_1-x_0}-\sqrt{\lambda}\br{y_1-y_0}}
        }^{-N}.
    \end{equation}
\end{itemize}
Combining all of the above, we derive
\begin{align*}
    \eqref{eq_sum_k_proj}
    \lesssim &
    \lambda^\frac{3}{2}
    \int_{E_w^2}
        \int_{\R^2}
            \sqrt{\lambda}
            \ang{
                \sqrt{\lambda}\br{x-x_0}
            }^{-N}
            \cdot
            \lambda
            \ang{
                \sqrt{\lambda}\br{2r\br{x-x_0}-\sqrt{\lambda}\br{y-y_0}}
            }^{-N}
        dxdy
        \nonumber\\
    \cdot &
        \ang{
            \sqrt{\lambda}\br{2r\br{x_1-x_0}-\sqrt{\lambda}\br{y_1-y_0}}
        }^{-N}
    d\mu_{p,q}\br{x_0,y_0}
    d\mu_{p,q}\br{x_1,y_1}\nonumber\\
    \eqsim &
    \lambda^{\frac{3}{2}}
    \!\!
    \int_{E_w^2}
    \!\!
        \ang{
            \sqrt{\lambda}\br{2r\br{x_1-x_0}-\sqrt{\lambda}\br{y_1-y_0}}
        }^{-N}
    \!\!
    d\mu_{p,q}\br{x_0,y_0}
    d\mu_{p,q}\br{x_1,y_1}.
\end{align*}

In conclusion, as a consequence of \eqref{eq_sum_k_proj}, we have now established the following estimate:
\begin{align}%
    &
    \mathop{\mathbb{E}}_{\frac{\sqrt{\lambda}}{3}\leq r \leq 3\sqrt{\lambda}}
        \sum_{
            \substack{u,v:\\
            \abs{v}\leq 3\sqrt{\lambda}
        }}
        \frac{
            G_{p,q,r,u,v,w}^2
        }{\ang{3w\frac{r^2}{\lambda}-2v\frac{r}{\sqrt{\lambda}}-u}^N}\nonumber
    \\
    \lesssim &
    \lambda^{\frac{3}{2}}
    \!\!
    \int_{E_w^2}
        \mathop{\mathbb{E}}_{r\eqsim \sqrt{\lambda}}
    \!\!
        \ang{
            \sqrt{\lambda}\br{2r\br{x_1-x_0}-\sqrt{\lambda}\br{y_1-y_0}}
        }^{-N}
    \!\!
    d\mu_{p,q}\br{x_0,y_0}
    d\mu_{p,q}\br{x_1,y_1}\label{eq_Lambda_UU_G_fact_int_bd}.
\end{align}%
With these we have achieved a simplified expression that encodes the curvature structure within the $r$-average. In order to exploit this structure and aim for the final desired decay we apply Cauchy-Schwarz inequality in the above integral and deduce
\begin{align}\label{csexplc}
   \eqref{eq_Lambda_UU_G_fact_int_bd} \lesssim &
    \lambda^{\frac{3}{2}}
    \br{\mu_{p,q}\otimes\mu_{p,q}}^{\frac{1}{2}}\br{E^2_w}
    \Bigg(
    \int_{E_w^2}
        \bigg(
            \mathop{\mathbb{E}}_{r\eqsim \sqrt{\lambda}}
                    \ang{
                        \sqrt{\lambda}
                        \br{
                            2r\br{x_1-x_0}-\sqrt{\lambda}\br{y_1-y_0}
                        }
                    }^{-N}
        \bigg)^2\nonumber\\
        &\hspace{13em}\cdot
    d\mu_{p,q}\br{x_0,y_0}
    d\mu_{p,q}\br{x_1,y_1}
    \Bigg)^{\frac{1}{2}}\nonumber\\
    \leq &
    \lambda\mu_{p,q}\br{E_w}
    \Bigg(
        \int_{E_w\times \R^2}
            \br{
                    \mathop{\mathbb{E}}_{r\eqsim \sqrt{\lambda}}
                        \ang{
                            \sqrt{\lambda}
                            \br{
                                2r x_\Delta-\sqrt{\lambda}y_\Delta
                            }
                        }^{-N}
            }^2
          d\mu_{p,q}\br{x_0,y_0}
        \sqrt{\lambda} dx_\Delta \sqrt{\lambda} dy_\Delta
    \Bigg)^{\frac{1}{2}}\nonumber\\
    =&
    \lambda\mu_{p,q}^{\frac{3}{2}}\br{E_w}
    \br{
        \int_{\R^2}
            \br{
                \mathop{\mathbb{E}}_{r\eqsim \sqrt{\lambda}}
                    \ang{
                            2r x-
                            \sqrt{\lambda}y
                    }^{-N}
            }^2
            dxdy
    }^{\frac{1}{2}}.
\end{align}
Expanding now the square and applying Lemma \ref{lem_jap_mul_diff} we obtain
\begin{align}\label{exp}
    \br{
            \mathop{\mathbb{E}}_{r\eqsim \sqrt{\lambda}}
                \ang{
                        2r x-
                        \sqrt{\lambda}y
                }^{-N}
    }^2
    \lesssim &
        \mathop{\mathbb{E}}_{r\eqsim \sqrt{\lambda}}
        \mathop{\mathbb{E}}_{s\eqsim \sqrt{\lambda}}
        \ang{
                2\br{r-s} x
        }^{-N}
        \ang{
            2s x-
            \sqrt{\lambda}y
        }^{-N}
    \nonumber\\
    \lesssim &
        \mathop{\mathbb{E}}_{\abs{r}\lesssim \sqrt{\lambda}}
        \ang{
            2r x
        }^{-N}
        \mathop{\mathbb{E}}_{s\eqsim \sqrt{\lambda}}
        \ang{
            2s x-
            \sqrt{\lambda}y
        }^{-N}.
\end{align}
Putting now together \eqref{eq_Lambda_UU_G_fact_int_bd}, \eqref{csexplc} and \eqref{exp}, we deduce
\begin{align}\label{Gterm}
   &
   \mathop{\mathbb{E}}_{\frac{\sqrt{\lambda}}{3}\leq r \leq 3\sqrt{\lambda}}
        \sum_{
            \substack{u,v:\\
            \abs{v}\leq 3\sqrt{\lambda}
        }}
        \frac{
            G_{p,q,r,u,v,w}^2
        }{\ang{3w\frac{r^2}{\lambda}-2v\frac{r}{\sqrt{\lambda}}-u}^N}
    \nonumber\\
    \lesssim &
    \lambda\mu_{p,q}^{\frac{3}{2}}\br{E_w}
    \br{
        \int
            \br{
                \mathop{\mathbb{E}}_{\abs{r}\lesssim \sqrt{\lambda}}
                    \ang{
                        2r x
                    }^{-N}
                \int
                    \mathop{\mathbb{E}}_{s\eqsim \sqrt{\lambda}}
                    \ang{
                        2s x-
                        \sqrt{\lambda}y
                    }^{-N}
                dy
            }
        dx
    }^{\frac{1}{2}}\nonumber\\
    \lesssim &
    \sqrt{\lambda}
    \log^\frac{1}{2}\lambda\,\mu_{p,q}^\frac{3}{2}\br{E_w}.
\end{align}
Thus, plugging \eqref{Gterm} into \eqref{eq_Gpqw_term} we conclude for this stage that
\begin{equation}\label{intL}
    \Lambda\br{F^{\br{m}},G^{\br{n}}}
    \lesssim
    \sum_{p,q}
    \sum_{w\in\mathcal{G}_{p,q}^{\br{n}}}
        \br{
            \mathop{\mathbb{E}}_{r\eqsim \sqrt{\lambda}}
            \sum_{u,v}
            \frac{
                \br{F^{\br{m}}_{p-r,q-\frac{r^2}{\sqrt{\lambda}},u,v}}^2
            }{
                \ang{3w\frac{r^2}{\lambda}-2v\frac{r}{\sqrt{\lambda}}-u}^N}
        }^{\frac{1}{2}}
    \lambda^{\frac{1}{4}}
    \log^\frac{1}{4}\lambda\mu_{p,q}^\frac{3}{4}\br{E_w}.
\end{equation}
We now turn our attention towards the $F$-term. Using now the definition of \(\mathcal{G}^{\br{n}}_{p,q}\), \eqref{eq_cheby}, Cauchy--Schwarz and H\"older, we have
\begin{equation}\label{intl1}
    \eqref{intL}\lesssim
    \frac{2^{\frac{3n}{4}}}{\lambda^{\frac{3}{8}}}\cdot
    \frac{\lambda^{\frac{3}{4}}\log^{\frac{1}{4}}\lambda}{2^n}
    \sum_{p,q}
    \sup_{w}
        \br{
            \mathop{\mathbb{E}}_{r\eqsim \sqrt{\lambda}}
            \sum_{u,v}
            \frac{
                \br{F^{\br{m}}_{p-r,q-\frac{r^2}{\sqrt{\lambda}},u,v}}^2
            }{
                \ang{3w\frac{r^2}{\lambda}-2v\frac{r}{\sqrt{\lambda}}-u}^N
            }
        }^{\frac{1}{2}}
    \mu_{p,q}^{\frac{3}{4}}\br{E}.
\end{equation}
Linearizing now the supremum in the above expression via a measurable function \(w\br{\cdot,\cdot}\) we further have
\begin{equation}\label{intl2}
    \eqref{intl1}\lesssim
        2^{-\frac{n}{4}}
        \lambda^{\frac{3}{8}}
        \log^{\frac{1}{4}}\lambda
    \sum_{p,q}
        \br{
            \mathop{\mathbb{E}}_{r\eqsim \sqrt{\lambda}}
            \sum_{u,v}
            \frac{
                \br{F^{\br{m}}_{p-r,q-\frac{r^2}{\sqrt{\lambda}},u,v}}^2
            }{
                \ang{3w\br{p,q}\frac{r^2}{\lambda}-2v\frac{r}{\sqrt{\lambda}}-u}^N}
        }^{\frac{1}{2}}
    \mu_{p,q}^{\frac{3}{4}}\br{E}.
\end{equation}
Using that \(\mu_{p,q}\br{E}^{\frac{3}{4}}\lesssim \lambda^{-\frac{1}{4}}\mu_{p,q}\br{E}^{\frac{1}{2}}\), we apply Cauchy-Schwarz inequality in \(p,q\) variables and get
\begin{equation}\label{eq_factor_E_F_squ_tf_E}
   \eqref{intl2}\lesssim
        2^{-\frac{n}{4}}
        \lambda^{\frac{1}{8}}
        \log^{\frac{1}{4}}\lambda
    \br{
        \mathop{\mathbb{E}}_{r\eqsim \sqrt{\lambda}}
        \sum_{p,q,u,v}
        \frac{
            \br{F^{\br{m}}_{p-r,q-\frac{r^2}{\sqrt{\lambda}},u,v}}^2
        }{
            \ang{3w\br{p,q}\frac{r^2}{\lambda}-2v\frac{r}{\sqrt{\lambda}}-u}^N}
    }^{\frac{1}{2}}
    \abs{E}^{\frac{1}{2}}.
\end{equation}
It now remains to estimate the expression
\begin{equation}\label{eq_Er_Spquv_F_tf}
    \mathop{\mathbb{E}}_{r\eqsim \sqrt{\lambda}}
        \sum_{p,q,u,v}
        \frac{
            \br{F^{\br{m}}_{p-r,q-\frac{r^2}{\sqrt{\lambda}},u,v}}^2
        }{
            \ang{3w\br{p,q}\frac{r^2}{\lambda}-2v\frac{r}{\sqrt{\lambda}}-u}^N}
    .
\end{equation}
We first perform a change of variables in order to decouple the \(r\) variable interaction between \(F^{\br{m}}\) and the time-frequency correlation term. Setting for notational simplicity \(w\br{p,q,r}:=3w\br{ p+r, q-\left\lfloor-\frac{r^2}{\sqrt{\lambda}}\right\rfloor}\frac{r^2}{\lambda}\), we have
\begin{equation*}
    \eqref{eq_Er_Spquv_F_tf}
    \lesssim
    \mathop{\mathbb{E}}_{r\eqsim \sqrt{\lambda}}
    \sum_{
        p,q,
        u,v
    }
        \br{F^{\br{m}}_{p,q,u,v}}^2
        \ang{
            w\br{ p,q,r}-2v\frac{r}{\sqrt{\lambda}}-u
        }^{-N}.
\end{equation*}

To extract cancellation out of the time-frequency correlation term, we move the expectation into the inner layer and perform another Cauchy-Schwarz inequality in the \(u,v\) variables. This gives the following estimate:
\begin{equation*}
    \lesssim
    \sum_{p,q}
    \br{
        \sum_{u,v}
            \br{F^{\br{m}}_{p,q,u,v}}^4
    }^{\frac{1}{2}}
    \br{
        \sum_{\substack{u,v:\\
        \abs{v}\lesssim \sqrt{\lambda}}}
        \br{
            \mathop{\mathbb{E}}_{
                r\eqsim \sqrt{\lambda}
            }
            \ang{
                w\br{p,q,r}-2v\frac{r}{\sqrt{\lambda}}-u
            }^{-N}
        }^2
    }^{\frac{1}{2}}.
\end{equation*}
On the one hand, from \eqref{eq_cheby} and the natural bound on \(F^{\br{m}}_{p,q,u,v}\),
\begin{equation}\label{eq_Lambda_UU_F1st_fac_bd}
    \br{
        \sum_{u,v}
            \br{F^{\br{m}}_{p,q,u,v}}^4
    }^{\frac{1}{2}}
    \lesssim \frac{\sqrt{\lambda}}{2^m}
    \frac{2^{2m}}{\lambda}\nrm{f}^2_{L^2\br{d\mu_{p,q}}}
    =\frac{2^{m}}{\sqrt{\lambda}}\nrm{f}^2_{L^2\br{\mu_{p,q}}}
    .
\end{equation}
On the other hand, we aim to obtain an appropriate control uniform in \(p,q\) on:
\begin{equation}\label{eq_Suv_E_sqed}
    \br{
        \sum_{\substack{u,v:\\
            \abs{v}\lesssim \sqrt{\lambda}}}
            \br{
                \mathop{\mathbb{E}}_{
                    r\eqsim \sqrt{\lambda}
                }
                \ang{
                    w\br{p,q,r}-2v\frac{r}{\sqrt{\lambda}}-u
                }^{-N}
            }^2
    }^{\frac{1}{2}}.
\end{equation}
For this we expand the square and apply Lemma \ref{lem_jap_mul_diff} in order to get
\begin{align}%
    &
    \br{
        \mathop{\mathbb{E}}_{
            r\eqsim \sqrt{\lambda}
        }
        \ang{
            w\br{p,q,r}-2v\frac{r}{\sqrt{\lambda}}-u
        }^{-N}
    }^2\nonumber\\
    \lesssim &
    \mathop{\mathbb{E}}_{
        s\eqsim \sqrt{\lambda}
    }
    \mathop{\mathbb{E}}_{
        \abs{r}\lesssim \sqrt{\lambda}}
    \ang{
        \left.w\br{p,q,\cdot}\right\vert^{s+r}_s-2v\frac{r}{\sqrt{\lambda}}
    }^{-N}
    \ang{
        w\br{p,q,s}-2v\frac{s}{\sqrt{\lambda}}-u
    }^{-N}\label{eq_EE_w2v}
\end{align}%
Putting the last two relations together we deduce
\begin{equation*}
    \eqref{eq_Suv_E_sqed}\lesssim
    \br{
        \sum_{\substack{
            v:\\
            \abs{v}\lesssim\sqrt{\lambda}
        }}
        \sum_{u}
            \eqref{eq_EE_w2v}
    }^{\frac{1}{2}}
    \lesssim
    \log^{\frac{1}{2}}\lambda\label{eq_Lambda_UU_F2nd_fac_bd}
    .
\end{equation*}
Returning now to \eqref{eq_Er_Spquv_F_tf} we further deduce that
\begin{equation}\label{sec}
    \sqrt{\eqref{eq_Er_Spquv_F_tf}}\lesssim
    \br{
        \sum_{p,q}
            \eqref{eq_Lambda_UU_F1st_fac_bd}
            \cdot
            \eqref{eq_Lambda_UU_F2nd_fac_bd}
    }^{\frac{1}{2}}
    \lesssim
    \frac{
        2^{\frac{m}{2}}\log^{\frac{1}{4}}\lambda
    }{\lambda^{\frac{1}{4}}}
    \nrm{f}_{L^2}.
\end{equation}
Finally, putting together \eqref{intL}--\eqref{eq_factor_E_F_squ_tf_E}, \eqref{eq_Er_Spquv_F_tf} and \eqref{sec} we have
\begin{equation*}
    \Lambda\br{F^{\br{m}},G^{\br{n}}}
    \lesssim
    \frac{
        2^{\frac{2m-n}{4}}\log^{\frac{1}{2}}\lambda
    }{
        \lambda^{\frac{1}{8}}
    }
    \nrm{f}_{L^2}\abs{E}^{\frac{1}{2}}\,,
\end{equation*}
which, recalling the definition of $\Lambda_{UU}\br{F,G}$, implies
\begin{equation*}
    \Lambda_{UU}\br{F,G}
    \lesssim
    \lambda^{-\epsilon}\log^{\frac{5}{2}}\lambda
    \nrm{f}_{L^2}\abs{E}^{\frac{1}{2}}.\qed
\end{equation*}

\subsubsection{Treatment of the sparse-sparse component $\Lambda_{SS}$ assuming control on the time-frequency correlation set}
Fix \(\br{m,n}\in\J_{S,S}\). From \eqref{eq_cheby}, for every \(p,q\in\Z\) the cardinals of the sets \(\F^{\br{m}}_{p,q}\) and \(\mathcal{G}^{\br{n}}_{p,q}\)
are small, that is
\begin{equation*}
    \#\F^{\br{m}}_{p,q}\lesssim \lambda^{12\epsilon},\quad
    \#\mathcal{G}^{\br{n}}_{p,q}\lesssim \lambda^{6\epsilon}.
\end{equation*}
Thus, via a maximality argument, we will be able to remove the sum in \(u,v,w\) from our discretized model form. This will produce eventually an expression involving only the spatial information of the functions \(f\) and \(\1_E\) with the frequency information being encoded into time-frequency correlation terms.

Indeed, in order to make precise the above description, we start by rearranging the sum:
\begin{equation}\label{ss1}
    \Lambda\br{F^{\br{m}},G^{\br{n}}}=
    \sum_{p,q}
    \sum_w
    \sum_{
        u,v
    }
    \mathop{\mathbb{E}}_{r\eqsim \sqrt{\lambda}}
    \frac{
        F^{\br{m}}_{p-r,q-\frac{r^2}{\sqrt{\lambda}},u,v}
        G^{\br{n}}_{p,q,r,u,v,w}
    }{
        \ang{
            3w\frac{r^2}{\lambda}
            -2v\frac{r}{\sqrt{\lambda}}
            -u
        }^N
    }.
\end{equation}
Accounting now for the smallness of the set \(\mathcal{G}^{\br{n}}_{p,q}\),we remove the $w$-parameter by introducing a measurable function \(w\br{\cdot,\cdot}\) with \(w\br{p,q}\in\mathcal{G}^{\br{n}}_{p,q}\) in order to deduce
\begin{equation}\label{ss2}
   \eqref{ss1} \leq
    \lambda^{6\epsilon}
    \sum_{p,q}
    \sum_{
        u,v
    }
    \mathop{\mathbb{E}}_{r\eqsim \sqrt{\lambda}}
    \frac{
        F^{\br{m}}_{p-r,q-\frac{r^2}{\sqrt{\lambda}},u,v}
        G^{\br{n}}_{p,q,r,u,v,w\br{p,q}}
    }{
        \ang{
            3w\br{p,q}\frac{r^2}{\lambda}
            -2v\frac{r}{\sqrt{\lambda}}
            -u
        }^N
    }.
\end{equation}
We further simplify the above expression via \eqref{eq_G_bd_2_mu}, \eqref{eq_Gpq_set}, and the fact that \(\mu_{p,q}\br{E}\lesssim\mu_{p,q}^{\frac{1}{2}}\br{E}/\sqrt{\lambda}\):
\begin{equation}\label{ss3}
    \eqref{ss2}\lesssim
    \lambda^{6\epsilon}
    \sum_{p,q}
    \sum_{
        u,v
    }
    \mathop{\mathbb{E}}_{r\eqsim \sqrt{\lambda}}
    \frac{
        F^{\br{m}}_{p-r,q-\frac{r^2}{\sqrt{\lambda}},u,v}\:\,
        \mu_{p,q}^{\frac{1}{2}}\br{E}
    }{
        \ang{
            3w\br{p,q}\frac{r^2}{\lambda}
            -2v\frac{r}{\sqrt{\lambda}}
            -u
        }^N
    }.
\end{equation}
We then perform a change of variable to decouple the \(r\)-variable interaction between \(F^{\br{m}}\) and the time-frequency correlation term:
\begin{equation}\label{ss4}
    \eqref{ss3}\lesssim
    \lambda^{6\epsilon}
    \sum_{p,q}
    \sum_{
        u,v
    }
    \mathop{\mathbb{E}}_{r\eqsim \sqrt{\lambda}}
    \frac{
        F^{\br{m}}_{p,q,u,v}\:\,
        \mu_{p+r,q-\left\lfloor-\frac{r^2}{\sqrt{\lambda}}\right\rfloor}^{\frac{1}{2}}\br{E}
    }{
        \ang{
            3w\br{p+r,q-\left\lfloor-\frac{r^2}{\sqrt{\lambda}}\right\rfloor}\frac{r^2}{\lambda}
            -2v\frac{r}{\sqrt{\lambda}}
            -u
        }^N
    }.
\end{equation}
Once again, two measurable functions \(u\br{\cdot,\cdot},v\br{\cdot,\cdot}\) satisfying \(\br{u\br{p,q},v\br{p,q}}\in\F^{\br{m}}_{p,q}\) are introduced in order to discard the summation in $u,v$ parameters and dominate the above expression with a loss accounting for the size of \(\F^{\br{m}}_{p,q}\):
\begin{equation*}
   \eqref{ss4} \leq
    \lambda^{18\epsilon}
    \sum_{p,q}
    \mathop{\mathbb{E}}_{r\eqsim \sqrt{\lambda}}
    \frac{
        F^{\br{m}}_{p,q,u\br{p,q},v\br{p,q}}\:\,
        \mu_{p+r,q-\left\lfloor-\frac{r^2}{\sqrt{\lambda}}\right\rfloor}^{\frac{1}{2}}\br{E}
    }{
        \ang{
            3w\br{p+r,q-\left\lfloor-\frac{r^2}{\sqrt{\lambda}}\right\rfloor}\frac{r^2}{\lambda}
            -2v\br{p,q}\frac{r}{\sqrt{\lambda}}
            -u\br{p,q}
        }^N
    }.
\end{equation*}
Furthermore, using that $\mu_{p+r,q-\left\lfloor-\frac{r^2}{\sqrt{\lambda}}\right\rfloor}\br{E}\eqsim \mu_{p+r,q+\frac{r^2}{\sqrt{\lambda}}}\br{E}$ condition \eqref{eq_Fpq_set} and the notation
\begin{equation}\nonumber
    \kappa\br{p,q,r}:=
    \ang{
        3w\br{p+r,q-\left\lfloor-\frac{r^2}{\sqrt{\lambda}}\right\rfloor}\frac{r^2}{\lambda}
        -2v\br{p,q}\frac{r}{\sqrt{\lambda}}
        -u\br{p,q}
    }^{-N}\,,
\end{equation}
we now deduce that
\begin{equation}\label{ssfin}
    \Lambda\br{F^{\br{m}},G^{\br{n}}}\lesssim
    \lambda^{18\epsilon}
    \sum_{p,q}
        \mathop{\mathbb{E}}_{
            r\eqsim \sqrt{\lambda}
        }
            \nrm{f}_{L^2\br{\mu_{p,q}}}
            \mu^{\frac{1}{2}}_{p+r,q+\frac{r^2}{\sqrt{\lambda}}}\br{E}
            \kappa\br{p,q,r}.
\end{equation}
The expression on the right is "fully physical." Namely, the frequency behavior of the functions \(f,\1_E\) plays no longer any role. Additionally, via almost orthogonality, it suffices to consider the local variant of the model form estimate. Namely, with the obvious notational correspondences, we may consider instead
    \begin{equation}\label{eq_phys_loc_mod}
        \Lambda_\ast\br{F,G}:=
        \sum_{\substack{p,q\\
        \abs{p},\abs{q}\lesssim\sqrt{\lambda} }}
        \mathop{\mathbb{E}}_{
            r\eqsim \sqrt{\lambda}
        }
            F_{p,q}
            G_{p+r,\left\lfloor q+\frac{r^2}{\sqrt{\lambda}}\right\rfloor}
            \kappa\br{p,q,r}.
    \end{equation}
    and aim to show the following local estimates:

\begin{proposition}\label{prop_phys_loc_mod_bd} With the previous notation we have
    \begin{equation}\label{stat}
        \Lambda_\ast\br{F,G}\lesssim
        \lambda^{-19\epsilon}
        \nrm{F_{p,q}}_{\ell^2\br{p,q}}
        \nrm{G_{p,q}}_{\ell^2\br{p,q}}
    \end{equation}
\end{proposition}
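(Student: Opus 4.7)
The plan is to reduce the bilinear form $\Lambda_\ast$ to a scalar estimate on the double average of the correlation kernel $\kappa$, which will then be treated by the level-set analysis of Section~\ref{LevSet}. Viewing $\Lambda_\ast(F,G) = \langle F, TG\rangle_{\ell^2}$ with
\[
(TG)_{p,q} := \mathop{\mathbb{E}}_{r\eqsim\sqrt{\lambda}} G_{p+r,\,\lfloor q + r^2/\sqrt{\lambda}\rfloor}\,\kappa(p,q,r),
\]
the target \eqref{stat} becomes the operator bound $\nrm{T}_{\ell^2\to\ell^2}\lesssim \lambda^{-19\epsilon}$. Expanding $\nrm{TG}_{\ell^2}^2$ and changing variables $p\mapsto p-r,\,q\mapsto q-\lfloor r^2/\sqrt{\lambda}\rfloor$, the $G\otimes G$ factors pair along the parabolically-structured shift $(\delta_1,\delta_2)=(s-r,\,(s^2-r^2)/\sqrt{\lambda})$, which encodes the curvature of the moment curve.

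A double Cauchy--Schwarz---first in the $(p,q)$-sum and then in the $(r,s)$-average---absorbs the $G\otimes G$ factors into $\nrm{G}_{\ell^2}^2$ and reduces the bilinear task to the scalar estimate
\[
\sup_{p,q}\;\mathop{\mathbb{E}}_{r,s\eqsim\sqrt{\lambda}}\,\kappa(p,q,r)\,\kappa(p,q,s)\;\lesssim\;\lambda^{-38\epsilon}.
\]
I would establish this through a dyadic decomposition of the joint level sets $\{(r,s):\kappa(p,q,r)\kappa(p,q,s)\sim 2^{-\alpha}\}$, combined with the measure bounds on the time-frequency correlation set supplied by Section~\ref{LevSet}.

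The main obstacle is precisely this level-set control. Recall $\kappa(p,q,r) = \ang{\phi(r)}^{-N}$ where
\[
\phi(r) = 3\,w(p+r,\,q-\lfloor -r^2/\sqrt{\lambda}\rfloor)\,\frac{r^2}{\lambda} - 2\,v(p,q)\,\frac{r}{\sqrt{\lambda}} - u(p,q),
\]
with $u,v,w$ the maximizers produced in \eqref{ssfin} from the sparse sets $\F^{\br{m}}_{p,q}$ and $\G^{\br{n}}_{p,q}$ whose cardinalities in the regime $(m,n)\in\J_{S,S}$ are at most $\lambda^{12\epsilon}$ and $\lambda^{6\epsilon}$ respectively. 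Unlike in the uniform-uniform case, $w$ here depends on the shifted pair $(p+r, q+r^2/\sqrt{\lambda})$ and may jump with $r$, so $\phi(r)$ is only piecewise quadratic; controlling the joint smallness of $\phi(r)$ and $\phi(s)$ will require leveraging the non-resonant structure of the triple $(u,v,w)$ together with the combinatorial sparseness of $\F^{\br{m}}$ and $\G^{\br{n}}$ to rule out any large time-frequency correlation set.
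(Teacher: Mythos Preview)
Your $TT^*$ reduction has a genuine gap: the scalar estimate
\[
\sup_{p,q}\;\mathop{\mathbb{E}}_{r,s\eqsim\sqrt{\lambda}}\,\kappa(p,q,r)\,\kappa(p,q,s)
=\sup_{p,q}\Bigl(\mathop{\mathbb{E}}_{r\eqsim\sqrt{\lambda}}\kappa(p,q,r)\Bigr)^2\;\lesssim\;\lambda^{-38\epsilon}
\]
is simply false. For a single fixed $(p,q)$ the shifted points $\bigl(p+r,\,q-\lfloor -r^2/\sqrt{\lambda}\rfloor\bigr)$ are all distinct as $r$ varies, so the values $w\bigl(p+r,\,q-\lfloor -r^2/\sqrt{\lambda}\rfloor\bigr)$ can be prescribed independently. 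Choosing them so that $3w\cdot r^2/\lambda=2v(p,q)r/\sqrt{\lambda}+u(p,q)$ (which keeps $|w|\lesssim\sqrt{\lambda}$) forces $\kappa(p,q,r)\equiv 1$ for all $r$. The sparseness of $\F^{\br{m}}$, $\G^{\br{n}}$ does not help here: those constraints were already cashed in to produce the functions $u,v,w$ in \eqref{ssfin}, and at this stage $u,v,w$ are arbitrary measurable functions subject only to $|u|\vee|v|\vee|w|\eqsim\sqrt{\lambda}$. The level-set Lemma~\ref{lem_tf_cor_lev_set_ana} gives decay only for the \emph{full average} $\nrm{K}_{L^1(dx\,dy\,dt)}$, never a pointwise-in-$(x,y)$ bound on $\nrm{K(x,y,\cdot)}_{L^1(dt)}$.

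This is exactly why the paper does \emph{not} attempt a direct $\ell^2\to\ell^2$ bound on $T$. Instead it passes to the continuum form $I_\ast(f,g)=\int f(x,y)\,g(x+t,y+t^2)\,K(x,y,t)\,dx\,dy\,dt$ and proves two endpoint estimates: a trivial-kernel bound $I_\ast\lesssim\nrm{f}_{L^{3/2}}\nrm{g}_{L^{3/2}}$ coming from the $L^{3/2}\to L^3$ smoothing of the parabolic averaging operator (Lemma~\ref{thm_Lp_impro}), and a decay bound $I_\ast\lesssim\lambda^{-\delta/2}\nrm{f}_{L^\infty}\nrm{g}_{L^\infty}$ coming from $\nrm{K}_{L^1(dx\,dy\,dt)}\lesssim\lambda^{-\delta/2}$ via Lemma~\ref{lem_tf_cor_lev_set_ana}. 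Real interpolation between $L^{3/2}$ and $L^\infty$ then yields the $L^2$ bound with decay. The crucial point is that the $L^\infty$ endpoint lets the $(x,y)$-integration in $\nrm{K}_{L^1}$ do the work that your supremum cannot; the $L^{3/2}$ endpoint (i.e.\ genuine curvature smoothing) is what allows interpolation back to $L^2$. Your Cauchy--Schwarz throws away precisely this averaging in $(p,q)$.
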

\medskip

Assuming for the moment that Proposition \ref{prop_phys_loc_mod_bd} holds then, by taking \(F_{p,q}:=\nrm{f}_{L^2\br{\mu_{p,q}}}\) and \(G_{p,q}:=\mu_{p,q}^{\frac{1}{2}}\br{E}\) and using \eqref{ssfin}, we deduce the full desired estimate
\begin{equation*}
    \Lambda_{SS}(F,G)\lesssim
    \lambda^{-\epsilon} \log^2\lambda
    \nrm{f}_{L^2}\abs{E}^{\frac{1}{2}}.\qed
\end{equation*}

\medskip

We now turn our attention towards Proposition \ref{prop_phys_loc_mod_bd}; its proof relies on the following classical result\footnote{For an alternative route that avoids entirely any interpolation theory and is based on purely $L^2$ methods and some basic combinatorics, we invite the reader to consult Sections \ref{Incimprovest} and  \ref{StrongL2}.}:
\begin{lemma}[\textsf{\(L^p\) improving of the averaging operator along parabola}, \cite{LW73}, \cite{CM98}]\label{thm_Lp_impro}
    \begin{equation}\label{Improv}
        \nrm{
            \int_{\frac{1}{2}}^2
                f\br{x+t,y+t^2}
            dt
        }_{L^3\br{dxdy}}
        \lesssim \nrm{f}_{L^{\frac{3}{2}}}
    \end{equation}
\end{lemma}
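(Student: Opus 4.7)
The bound \eqref{Improv} is a classical $L^p$-improving estimate for the averaging operator along the parabolic arc, going back to Littman \cite{LW73} and refined by Strichartz and Christ--M\"uller \cite{CM98}. My plan is to reproduce the standard proof via Stein's complex analytic interpolation theorem applied to an analytic family of convolution operators obtained by smoothing the underlying arclength measure.

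Set $Tf:=f\ast \mu$, where $\mu$ is the push-forward of Lebesgue measure on $[1/2,2]$ under the embedding $t\mapsto(t,t^2)$; our goal is $\nrm{T}_{L^{3/2}\to L^3}<\infty$. The first ingredient is the Fourier-analytic bound on the symbol: writing
$$
   \widehat\mu(\xi,\eta)=\int_{1/2}^2 e^{-i(\xi t+\eta t^2)}\,dt,
$$
van der Corput's lemma applied to the phase $\phi(t)=\xi t+\eta t^2$ (with $\phi''\equiv 2\eta$) yields the decay $|\widehat\mu(\xi,\eta)|\lesssim \ang{\eta}^{-1/2}$, together with rapid decay off the conormal strip $\{\xi+2\eta t=0:\,t\in[1/2,2]\}$ via repeated integration by parts in $t$.

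Next, I would embed $T$ as $T_0$ in an admissible analytic family $\{T_z\}_{z\in\C}$ designed so that the two endpoint estimates hold. A standard recipe (Strichartz/Christ) is to use a $y$-directional fractional-integration smoothing of $\mu$, normalized by a Gamma factor $\Gamma(z+1)^{-1}$ and by the Stein regularizing factor $e^{(z-\frac12)^2}$ to guarantee polynomial growth in $|\mathrm{Im}\,z|$. The family is chosen so that:
\begin{itemize}
\item at $\mathrm{Re}\,z=-1/2$, the multiplier symbol of $T_z$ is uniformly bounded, hence $T_z\colon L^2\to L^2$ by Plancherel, using the $\ang{\eta}^{-1/2}$ decay of $\widehat\mu$;
\item at $\mathrm{Re}\,z=1$, the convolution kernel of $T_z$ is bounded pointwise on $\R^2$, hence $T_z\colon L^1\to L^\infty$ by Young, using that the transverse smoothing spreads the 1-D arclength measure into a 2-D bounded density.
\end{itemize}
Applying Stein's interpolation theorem with $\theta=1/3$ (so that $(1-\theta)\cdot(-1/2)+\theta\cdot 1=0$) then gives $T=T_0\colon L^p\to L^q$ for $1/p=(1-\theta)/2+\theta=2/3$ and $1/q=(1-\theta)/2=1/3$, which is exactly \eqref{Improv}.

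The main obstacle is the construction of $T_z$ that simultaneously supports both endpoints: the $L^1\to L^\infty$ bound at $\mathrm{Re}\,z=1$ forces the smoothing to be strong enough to kill the one-dimensional concentration of $\mu$, while the $L^2\to L^2$ bound at $\mathrm{Re}\,z=-1/2$ forces the smoothing to exactly cancel the $\ang{\eta}^{-1/2}$ loss of the oscillatory integral. Balancing these competing requirements, together with the admissibility conditions of Stein's theorem, is the technical heart of the argument; once that family is in place, both endpoint verifications are essentially mechanical applications of van der Corput, Plancherel, and direct kernel estimates.
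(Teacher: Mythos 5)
Your outline is the classical complex-interpolation route (essentially Littman's original argument): set up a Riesz-type analytic family $T_z$ obtained by transverse fractional integration of the arclength measure, verify $L^2\to L^2$ at $\mathrm{Re}\,z=-\tfrac12$ via Plancherel and the van der Corput decay $|\widehat\mu(\xi,\eta)|\lesssim\ang{\eta}^{-1/2}$, verify $L^1\to L^\infty$ at $\mathrm{Re}\,z=1$ via a bounded-kernel estimate, and apply Stein's theorem at $\theta=\tfrac13$. The numerology is correct and this is a valid, well-known way to obtain \eqref{Improv}. The paper, by contrast, treats the lemma as a cited black box and supplies in Section \ref{Incimprovest} a self-contained \emph{substitute} of a different flavor: a discretization into a point--parabola incidence problem, a Cauchy--Schwarz plus double-counting argument, and the elementary geometric observation \eqref{i3}, yielding the weaker (but sufficient) restricted bound $\L_\p(\chi_F,\chi_G)\lesssim|F|^{5/8}|G|^{5/8}$; as the paper notes, upgrading to the full exponent $2/3$ requires Szemer\'edi--Trotter. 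So the two routes trade precision for elementarity: your interpolation argument gives the sharp strong-type estimate but relies on analytic-family machinery, whereas the paper's combinatorial proof avoids interpolation entirely (important to the paper's agenda in Sections \ref{Incimprovest}--\ref{StrongL2} of achieving purely $L^2$ bounds) at the cost of a non-sharp exponent.

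The one genuine gap in your submission is the one you flag yourself: you never actually exhibit the analytic family $T_z$, verify that $T_0=T$, check the admissibility (polynomial growth in $|\mathrm{Im}\,z|$) of the two endpoint bounds, or carry out the kernel computation showing the smoothed measure at $\mathrm{Re}\,z=1$ has a uniformly bounded density. Since that construction is exactly where the competing demands of the two endpoints must be reconciled, calling it ``essentially mechanical'' understates the work; as written this is a correct road map rather than a proof. If you want to complete it, the cleanest choice here is $\widehat{\mu_z}(\xi,\eta):=\gamma(z)\,\ang{\eta}^{z}\,\widehat\mu(\xi,\eta)$ with a suitable Gamma-factor $\gamma(z)$ and a Stein damping factor, for which the $L^2$ endpoint at $\mathrm{Re}\,z=\tfrac12$ (note: positive, since you are \emph{multiplying} by $\ang{\eta}^{z}$) is immediate from van der Corput, and the $L^1\to L^\infty$ endpoint at $\mathrm{Re}\,z=-1$ follows from a stationary-phase computation of the kernel; the interpolation point $z=0$ then lands you at $\theta=\tfrac13$ on the segment $[-1,\tfrac12]$, recovering \eqref{Improv}.
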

and on the following key statement:

\begin{lemma}[\textsf{Time-frequency correlation level set analysis}]\label{lem_tf_cor_lev_set_ana}
There is a universal constant \(\delta>0\). For \(\lambda\gg 1\) and measurable functions \(u,v,w:\R\to\R\) satisfying \(\abs{u}\vee\abs{v}\vee\abs{w}\eqsim \lambda\), we have the following decay estimate:
\begin{equation*}
    \nrm{
        \ang{
            w\br{x+t,y+t^2}t^2+v\br{x,y}t+u\br{x,y}
        }^{-N}
    }_{L^1\br{dxdydt,\Br{-1,1}^2\times\bR{\frac{1}{2},2}}}
    \lesssim \lambda^{-\delta}.
\end{equation*}
\end{lemma}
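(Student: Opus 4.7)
My strategy will combine a level-set decomposition of the coefficient functions with the parabolic $L^p$-improving estimate of Lemma~\ref{thm_Lp_impro}. First, I will dyadically decompose the sizes of $\abs{u(x,y)}$, $\abs{v(x,y)}$, and $\abs{w(x',y')}$, at the cost of $O(\log^3\lambda)$ logarithmic losses. The hypothesis $\abs{u}\vee\abs{v}\vee\abs{w}\eqsim\lambda$ forces at least one of the three dyadic scales to be $\eqsim\lambda$. Setting $F(x,y,t):=w(x+t,y+t^2)t^2+v(x,y)t+u(x,y)$, I will observe that for $\ang{F}^{-N}$ to be non-negligible on a positive-measure set, two of the three contributions $\BR{u(x,y),\,v(x,y)t,\,w(x+t,y+t^2)t^2}$ must have comparable sizes $\eqsim\lambda$; otherwise a single dominant term prevents the cancellation $\abs{F}\lesssim 1$.

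\textbf{Easy sub-case.} When the two large terms are $\abs{u(x,y)},\abs{v(x,y)}\eqsim\lambda$ with $\abs{w(x+t,y+t^2)}$ remaining sub-critical, for each fixed $(x,y)$ the map $t\mapsto F(x,y,t)$ will be a quadratic in $t$ (up to a bounded perturbation from the $wt^2$ term) with at least one coefficient of size $\eqsim\lambda$. A standard degree-$\leq 2$ polynomial level-set estimate will yield $\abs{\BR{t\in[1/2,2]:\abs{F(x,y,t)}\leq 1}}\lesssim\lambda^{-1/2}$, and integrating over the bounded $(x,y)$-domain will deliver the desired decay.

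\textbf{Main sub-case.} The delicate situation will be when the $\abs{w}\eqsim\lambda$ term must participate in the cancellation. Here I will decompose $w$ via unit-scale level sets $W_\mu:=\BR{(x',y'):w(x',y')\in[\mu,\mu+1]}$ for integers $\abs{\mu}\lesssim\lambda$. On the set $(x+t,y+t^2)\in W_\mu$, the phase reduces to $F=\mu t^2+v(x,y)t+u(x,y)+O(1)$, so it remains to bound
\begin{equation*}
\sum_{\abs{\mu}\eqsim\lambda}\int\1_{W_\mu}(x+t,y+t^2)\,\1_{\BR{\abs{\mu t^2+v(x,y)t+u(x,y)}\leq 5}}\,dx\,dy\,dt.
\end{equation*}
For each $(x,y)$ and $\abs{\mu}\eqsim\lambda$, the quadratic-in-$t$ inequality will carve out a $t$-set of measure $\lesssim\lambda^{-1/2}$. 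Coupled with the parabolic incidence constraint, I will interpolate via H\"older between this pointwise $t$-measure bound and the $L^p$-improving estimate $\nrm{T\1_{W_\mu}}_{L^3}\lesssim\abs{W_\mu}^{2/3}$ for the parabolic averaging operator $T$ (Lemma~\ref{thm_Lp_impro}), and sum over $\mu$ exploiting the almost-orthogonality $\sum_\mu\abs{W_\mu}\lesssim 1$ following from $w$ being bounded on its compact support.

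\textbf{Main obstacle.} The central difficulty will be that $u,v,w$ are only measurable, which rules out stationary-phase or implicit-function arguments in the $t$-variable; in particular, for fixed $(x,y)$ one cannot directly bound the $t$-measure of $\BR{\abs{F}\leq 1}$ pointwise since $F$ depends on $w$ at a point moving along a parabola. The level-set decomposition $W_\mu$ effectively replaces $w$ by a step function (at cost $O(\log\lambda)$), while the parabolic $L^p$-improving captures the curvature-induced cancellation otherwise inaccessible. Balancing the H\"older exponents so as to produce a genuine negative power of $\lambda$ will be the most delicate point; if the basic $L^{3/2}\to L^3$ endpoint proves insufficient, I would appeal either to the full range of $L^p$-improving endpoints or---as the paper's footnote suggests---to a purely $L^2$-based combinatorial/incidence analysis along the lines of Sections~\ref{Incimprovest} and~\ref{StrongL2}.
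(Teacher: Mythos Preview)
Your ``easy sub-case'' is essentially sound, but the ``main sub-case'' contains a genuine gap that cannot be repaired within the framework you propose. After your level-set decomposition $W_\mu=\{w\in[\mu,\mu+1]\}$ with $|\mu|\eqsim\lambda$, the quantity to control is $\sum_\mu I_\mu$ where $I_\mu=\int h_\mu$ and $h_\mu(x,y):=\int_{A_\mu(x,y)}\1_{W_\mu}(x+t,y+t^2)\,dt$. You have the two pointwise bounds $h_\mu\leq|A_\mu|\lesssim\lambda^{-1/2}$ and $h_\mu\leq T\1_{W_\mu}$, but combining them yields only $I_\mu\lesssim\min(|W_\mu|,\lambda^{-1/2})$, and summing over $O(\lambda)$ values of $\mu$ with $\sum_\mu|W_\mu|\lesssim 1$ gives $\sum_\mu I_\mu\lesssim 1$ --- the trivial bound. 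The sharp $L^{3/2}\to L^3$ improving estimate does not help: it produces $I_\mu\lesssim|W_\mu|^{2/3}$, which for $|W_\mu|\leq 1$ is \emph{weaker} than the $L^1$ bound $I_\mu\lesssim|W_\mu|$, and any interpolated estimate $I_\mu\lesssim\lambda^{-\theta/2}|W_\mu|^{1-\theta}$ still sums (via H\"older over $\lambda$ indices) to $\lambda^{\theta/2}\gtrsim 1$. The obstruction is structural: once $w$ is discretized, the coupling between the quadratic constraint $A_\mu(x,y)$ and the incidence constraint $\1_{W_\mu}$ carries no exploitable cancellation, because $u,v$ remain arbitrary measurable and move the root of the quadratic freely as $(x,y)$ varies.

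The paper's argument is of a different nature and does not discretize $w$ at all. The key observation is that after a parabolic shift $(x,y)\mapsto(x-t,y-t^2)$, a Cauchy--Schwarz doubling in $t$ produces two kernel copies $K(x,y,t_+)$ and $K(X,Y,t_-)$ with $X=x+t_+-t_-$, $Y=y+t_+^2-t_-^2$, which share the \emph{same} argument $w(x+t_+,y+t_+^2)$. Lemma~\ref{lem_jap_mul_diff} then eliminates $w$ algebraically via a $2\times2$ determinant, leaving an expression in $u,v$ evaluated at two points. A further tripling (H\"older cubing the $Y$-variable) and another application of Lemma~\ref{lem_jap_mul_diff} eliminates $u(x,y),v(x,y)$ via a $3\times3$ cross-product, reducing to a genuine degree-$5$ polynomial in a now-smooth variable, after which a Van der Corput sublevel estimate and a non-degeneracy computation for the coefficient matrix finish the proof. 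The curvature is encoded in the non-vanishing of $\det\bm{M}_{\bm{Y}}$, not in $L^p$-improving.
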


In what follows we will prove Proposition \ref{prop_phys_loc_mod_bd} assuming that Lemma \ref{lem_tf_cor_lev_set_ana} holds. The proof of the latter will be deferred to the next section.

\begin{proof}[\textsf{Proof of Proposition \ref{prop_phys_loc_mod_bd}}]
    Observe that for \(q\in\Z\) and \(\alpha\in\R\setminus\Z\), we have the following identity:
    \begin{equation*}
        q-\lfloor -\alpha \rfloor =q+\lceil \alpha \rceil=\lceil q+ \alpha \rceil,\quad \forall \alpha\in\R\setminus\Z.
    \end{equation*}
    This suggests that we extend \(u,v,w\) from \(\Z^2\) to \(\R^2\) via \(\lceil\cdot\rceil\):
    \begin{equation*}
        \widetilde{u}\br{p,q}:=
        -u\br{\lceil p\rceil ,\lceil q\rceil}
        ,\quad
        \widetilde{v}\br{p,q}:=
        -2v\br{\lceil p\rfloor,\lceil q\rceil},\quad
        \widetilde{w}\br{p,q}:=
        3w\br{\lceil p\rceil,\lceil q\rceil}.
    \end{equation*}
    We extend the functions \(F,G\) naturally from \(\Z^2\) to \(\R^2\) correspondingly and extend \(\kappa\) from \(\Z^3\) to \(\R^3\) by setting:
    \begin{equation*}
        \widetilde{\kappa}\br{p,q,r}:=
        \ang{
            \widetilde{w}\br{ p+r, q+\frac{r^2}{\sqrt{\lambda}}}\frac{r^2}{\lambda}
            +\widetilde{v}\br{p,q}
            \frac{r}{\sqrt{\lambda}}
            +\widetilde{u}\br{p,q}
        }^{-N}.
    \end{equation*}
    This allows us to reinterpret the sum in \eqref{eq_phys_loc_mod} as an integral:
    \begin{equation*}
        \Lambda_\ast\br{F,G}
        \lesssim
        I_\ast\br{f,g}:=
        \int_{\abs{x},\abs{y} \lesssim 1\eqsim t}
            f\br{x,y}
            g\br{x+t,y+t^2}
            K\br{x,y,t}
        dxdydt,
    \end{equation*}
    where we define:
    \begin{equation*}
        f\br{x,y}:=\sqrt{\lambda}F_{\sqrt{\lambda}\br{x,y}}
        ,\quad
        g\br{x,y}:=\sqrt{\lambda}G_{\sqrt{\lambda}\br{x,y}}
        ,\quad
        K\br{x,y,t}:=\widetilde{\kappa}\br{\sqrt{\lambda}\br{x,y,t}}.
    \end{equation*}
    Moreover, since
    \begin{equation*}
        \nrm{F_{p,q}}_{\ell^2\br{p,q}}\eqsim
        \nrm{f}_{L^2},\quad
        \nrm{G_{p,q}}_{\ell^2\br{p,q}}\eqsim
        \nrm{g}_{L^2},
    \end{equation*}
    it suffices to show:
    \begin{equation*}
        I_\ast\br{f,g}\lesssim \lambda^{-19\epsilon}\nrm{f}_{L^2}\nrm{g}_{L^2}.
    \end{equation*}
    On the one hand, using the trivial bound \(\kappa\br{x,y,t}\lesssim 1\) we deduce
    \begin{equation*}
        I_\ast\br{f,g}\lesssim
        \int_{\R^2}
            f\br{x,y}
            \int_{t\eqsim 1}
                g\br{x+t,y+t^2}
            dt
        dxdy,
    \end{equation*}
    which, after further applying Lemma \ref{thm_Lp_impro} and \(L^\frac{3}{2}\br{dxdy}\times L^3\br{dxdy}\) H\"{o}lder's inequality, gives
    \begin{equation}\label{eq_I_fg_L3/2}
        I_\ast\br{f,g}\lesssim
        \nrm{f}_{L^{\frac{3}{2}}}\nrm{g}_{L^{\frac{3}{2}}}.
    \end{equation}
    On the other hand, we may bound \(f,g\) trivially by their \(L^\infty\) norms:
    \begin{equation*}
        I_\ast\br{f,g}\lesssim \nrm{K\br{x,y,t}}_{L^1\br{dxdydt,\abs{x},\abs{y}\lesssim 1\eqsim t}}
        \nrm{f}_{L^\infty}\nrm{g}_{L^\infty}.
    \end{equation*}
    Notice that the condition given by \eqref{eq_f_a_E_cond} allows us to conclude:
    \begin{equation*}
        \abs{\widetilde{w}\br{\sqrt{\lambda}\br{x,y}}}\vee\abs{\widetilde{v}\br{\sqrt{\lambda}\br{x,y}}}
        \vee\abs{\widetilde{u}\br{\sqrt{\lambda}\br{x,y}}}\eqsim \sqrt{\lambda}.
    \end{equation*}
    Applying Lemma \ref{lem_tf_cor_lev_set_ana}, we obtain:
    \begin{equation}\label{eq_I_fg_Linfty}
        I_\ast\br{f,g}\lesssim \lambda^{-\frac{\delta}{2}}\nrm{f}_{L^\infty}\nrm{g}_{L^\infty}.
    \end{equation}
    Finally, by real interpolation\footnote{For a simple argument that removes any use of real interpolation please see the Final Remarks, Section \ref{StrongL2}.} between \eqref{eq_I_fg_L3/2} and \eqref{eq_I_fg_Linfty}, we obtain
    \begin{equation*}
        \Lambda_\ast\br{F,G}\lesssim I_\ast\br{f,g}\lesssim \lambda^{-\frac{\delta}{8}}\nrm{f}_{L^2}\nrm{g}_{L^2}\eqsim \lambda^{-\frac{\delta}{8}}\nrm{F_{p,q}}_{\ell^2\br{p,q}}\nrm{G_{p,q}}_{\ell^2\br{p,q}}\,,
    \end{equation*}
which gives us the desired conclusion in \eqref{stat} with \(\epsilon=\frac{\delta}{152}\).
\end{proof}

\subsection{Level set analysis}\label{LevSet}

In this section we provide the proof of \textbf{Lemma \ref{lem_tf_cor_lev_set_ana}}. This is subdivided in five subsections, as follows:

\subsubsection{Reduction to the critical case: \(\abs{u}\vee\abs{v}\eqsim \lambda\)}
For simplicity, we denote
\begin{equation*}
    K\br{x,y,t}:=\ang{
        w\br{x+t,y+t^2}t^2+
        v\br{x,y}t+
        u\br{x,y}
    }^{-N},
\end{equation*}
and we set\footnote{The implicit constant in the definition of \(\I,\A\) may change from line to line throughout the argument due to the change of variables that we will employ in several key steps.}
\begin{equation*}
    \I:=\set{
        z\in\R
    }{
        \abs{z}\lesssim 1
    }
    ,\quad
    \A:=\set{
        s\in\R
    }{
        s\eqsim 1
    }.
\end{equation*}
Our aim  is to show
\begin{equation*}
    \nrm{K\br{x,y,t}}_{L^1\br{dxdydt,\I^2\times\A}}\lesssim \lambda^{-\delta}.
\end{equation*}
Let \(\delta\in\br{0,1}\) be a constant to be chosen later and consider the following set:
\begin{equation*}
    \mathcal{E}:=\set{\br{x,y}\in \I^2}{\nrm{K\br{x,y,t}}_{L^1\br{dt,\A}}\lesssim \lambda^{-\delta}}.
\end{equation*}
By construction, we have the trivial bound \(\nrm{K}_{L^1\br{dxdydt,\mathcal{E}\times\A}}\lesssim \lambda^{-\delta}\).
As for \(\br{x,y}\in\I^2\setminus\mathcal{E}\), there exists \(t\in\A\) such that
\begin{equation*}
    \abs{w\br{x+t,y+t^2}t^2+
        v\br{x,y}t+
        u\br{x,y}}
        \lesssim \lambda^{\frac{\delta}{N}}\ll \lambda.
\end{equation*}
Thus, either
\begin{equation*}
    \lambda\eqsim \abs{w\br{x+t,y+t^2}t^2}\eqsim
    \abs{v\br{x,y}t+u\br{x,y}}\lesssim \abs{u\br{x,y}}\vee\abs{v\br{x,y}},
\end{equation*}
or the alternative
\begin{equation*}
    \lambda \gg \abs{w\br{x+t,y+t^2}t^2}\vee
    \abs{v\br{x,y}t+u\br{x,y}}\implies \abs{u\br{x,y}}\eqsim\abs{v\br{x,y}}\eqsim \lambda.
\end{equation*}
As a result, from now on we can assume without loss of generality that \(\abs{u}\vee\abs{v}\eqsim \lambda\) on \(\I^2\).

\subsubsection{Decoupling of smooth variables from rough variables}
We start with the following simple observation: modulo constants the $L^1$ norm is invariant under the parabolic shift\footnote{When the context is clear, we drop the domain \(\I^2\times \A\) dependency in the \(L^p\) norm.}
\begin{equation}\label{eq_K_less_K_shift_123}
    \nrm{K}_{L^1}\leq
    \nrm{K\br{x-t,y-t^2,t}}_{L^1\br{dxdydt}}.
\end{equation}
We then apply Cauchy-Schwarz inequality on the square of the above quantity
\begin{align*}
   \eqref{eq_K_less_K_shift_123}^2
    \lesssim
    \nrm{
        \nrm{
            K\br{x-t,y-t^2,t}
        }_{L^1\br{dt}}
    }_{L^2\br{dxdy}}^2
    =
    2
    \nrm{
        \prod_{j=0,1}K\br{x-t_j,y-t_j^2,t_j}
    }_{L^1\br{dxdydt_0dt_1,t_0<t_1}}
\end{align*}
and undo the shift to deduce
\begin{equation*}
    \nrm{K}_{L^1}^2\lesssim
    \nrm{
        K\br{x,y,t_+}K\br{x+t_+-t_-,y+t_+^2-t_-^2,t_-}
    }_{L^1\br{dxdydt_+dt_-,t_-<t_+}}.
\end{equation*}
We now notice that the expression
\begin{equation*}
    K\br{x+t_+-t_-,y+t^2_+-t^2_-,t_-}
    =\ang{
        w\br{x+t_+,y+t_+^2}t_-^2+
        \cdots
    }^{-N}.
\end{equation*}
shares the term \(w\br{x+t_+,y+t_+^2}\) with \(K\br{x,y,t_+}\).
This creates a bridge between measurable functions with \(\br{x,y}\) as input and those with \(\br{x+t_+-t_-,y+t^2_+-t^2_-}\) as input. For convenience, we set
\begin{equation*}
    X:=x+t_+-t_-,\quad
    Y:=y+t_+^2-t_-^2.
\end{equation*}
At this point, we are invited to apply Lemma \ref{lem_jap_mul_diff} in order to deduce
\begin{equation}\label{eq_KK_les_t2utv}
    K\br{x,y,t_+}K\br{X,Y,t_-}
    \lesssim
    \ang{
        \det
        \begin{pmatrix}
            t_+^2 & u\br{x,y}+t_+v\br{x,y}\\
            t_-^2 & u\br{X,Y}+t_-v\br{X,Y}
        \end{pmatrix}
    }^{-N}
    =\ang{
        \bm{t}^2\vec{1}\wedge
        \br{\vec{u}+\bm{t}\vec{v}}
    }^{-N},
\end{equation}
where we introduce the alternating tensor product \(\wedge\) and the matrix/vector notations:
\begin{equation*}
    \bm{t}:=
    \begin{pmatrix}
        t_+ & 0\\
        0 & t_-
    \end{pmatrix}
    ,\quad
    \vec{1}:=
    \begin{pmatrix}
        1\\
        1
    \end{pmatrix}
    ,\quad
    \vec{u}:=
    \begin{pmatrix}
        u\br{x,y}\\
        u\br{X,Y}
    \end{pmatrix}
    ,\quad
    \vec{v}:=
    \begin{pmatrix}
        v\br{x,y}\\
        v\br{X,Y}
    \end{pmatrix}
\end{equation*}
to express the determinant in a more succinct form. Bounding \eqref{eq_KK_les_t2utv} trivially by \(1\) when \(0<t_+-t_-\lesssim \lambda^{-2\delta}\) gives the following estimate:
\begin{equation*}
    \nrm{K}^2_{L^1}
    \lesssim \lambda^{-2\delta}+
    \nrm{
        \ang{
            \bm{t}^2\vec{1}\wedge
            \br{\vec{u}+\bm{t}\vec{v}}
        }^{-N}
    }_{L^1\br{dxdydt_+dt_-,\lambda^{-2\delta} \lesssim t_+-t_-\lesssim 1 }}.
\end{equation*}
Thus, to prove Lemma \ref{lem_tf_cor_lev_set_ana}, it suffices to show
\begin{equation}\label{eq_K_2_bdd}
    \nrm{
        \ang{
            \bm{t}^2\vec{1}\wedge
            \br{\vec{u}+\bm{t}\vec{v}}
        }^{-N}
    }_{L^1\br{dxdydt_+dt_-,\lambda^{-2\delta} \lesssim t_+-t_-\lesssim 1 }}\lesssim \lambda^{-2\delta}.
\end{equation}
To proceed, we observe that
\begin{equation}\nonumber
    t_\pm=\frac{Y-y}{2\br{X-x}}\pm\frac{X-x}{2},\quad dt_+dt_-=\frac{dXdY}{2\br{X-x}}.
\end{equation}
As a result, by setting
\(\bm{j}:=
    \begin{pmatrix}
        1 & 0\\
        0 & -1
    \end{pmatrix}\), we can write\footnote{For the simplicity of the exposition we allow the following notational abuse: in what follows if $\bm{A}$ is a matrix and $a\in\R$ then $\bm{A}+a:=\bm{A}+a \bm{I}$ where $\bm{I}$ stands for the identity matrix.} \(\bm{t}=\frac{Y-y}{2\br{X-x}}+\bm{j}\frac{X-x}{2}\)
and obtain
\begin{equation*}
    \ang{
        \bm{t}^2\vec{1}\wedge
        \br{\vec{u}+\bm{t}\vec{v}}
    }^{-N}
    =\ang{
        \br{
            \frac{Y-y}{2\br{X-x}}+
            \bm{j}\frac{X-x}{2}
        }^2\vec{1}
        \wedge
        \br{
            \vec{u}
            +
            \br{
                \frac{Y-y}{2\br{X-x}}+
                \bm{j}\frac{X-x}{2}
            }
            \vec{v}
        }
    }^{-N}.
\end{equation*}
Let \(K_2\br{x,y,X,Y}\) be the right-hand-side expression of the above equality; it then remains to show
\begin{equation*}
    \nrm{K_2\br{x,y,X,Y}}_{
        L^1\br{
            \frac{dxdX}{X-x}dydY,
            \lambda^{-2\delta} \lesssim X-x\lesssim 1
        }
    }
    \lesssim \lambda^{-2\delta}.
\end{equation*}
Once here, we start by eliminating the \(u\br{x,y}\) and \(v\br{x,y}\) terms in the \(K_2\br{x,y,X,Y}\) expression.
This is achieved by cubing the expression, performing a \(L^{\frac{3}{2}}\times L^3\) H\"{o}lder's inequality to triple the \(Y\) variable and then applying Lemma \ref{lem_jap_mul_diff}. Indeed, we first have
\begin{align*}
    &\nrm{K_2\br{x,y,X,Y}}_{
        L^1\br{
            \frac{dxdX}{X-x}dydY,
            \lambda^{-2\delta} \lesssim X-x\lesssim 1
        }
    }^3\\
    \lesssim &
    \nrm{
        \frac{
            \1_{\I^3}\br{x,y,X}
        }{
            X-x
        }
    }^3_{
        L^{\frac{3}{2}}
        \br{
            dxdXdy,
            \lambda^{-2\delta}\lesssim X-x\lesssim 1
        }
    }
    \cdot
    \nrm{
        \nrm{
            K_2\br{
                x,y,X,Y
            }
        }_{L^1\br{dY}}
    }^3_{L^3
        \br{
            dxdXdy
        }
    }\\
   \lesssim & \lambda^{2\delta}\cdot \nrm{
        \prod_{j=1}^3
            K_2\br{x,y,X,Y_j}
    }_{L^1\br{
        dxdXdydY_1dY_2dY_3
    }}.
\end{align*}
To implement Lemma \ref{lem_jap_mul_diff} on the inner expression \(\prod_{j=1}^3K_2\br{x,y,X,Y_j}\), we shall first describe the choice of the \(O\br{1}\) bounded coefficients we choose for our purpose.
Observe that
\begin{equation*}
    K_2\br{x,y,X,Y_j}=\ang{
        \cdots
        -
        \br{
            \frac{Y_j-y}{2\br{X-x}}
            -\frac{X-x}{2}
        }^2
        \br{
            u\br{x,y}
            +
            \br{
                \frac{Y_j-y}{2\br{X-x}}
                +\frac{X-x}{2}
            }
            v\br{x,y}
        }
    }^{-N}.
\end{equation*}
There exists a natural choice of coefficients $P_j$ given by
\begin{equation}\nonumber
    \begin{pmatrix}
        P_1\\
        P_2\\
        P_3
    \end{pmatrix}
    :=
    \br{
        \frac{\bm{Y}-y}{2\br{X-x}}
        -\frac{X-x}{2}
    }^2
    \vec{1}
    \times
    \br{
        \frac{\bm{Y}-y}{2\br{X-x}}
        -\frac{X-x}{2}
    }^2
    \br{
        \frac{\bm{Y}-y}{2\br{X-x}}
        +\frac{X-x}{2}
    }
    \vec{1},
\end{equation}
where \(\times\) stands for the cross product in \(\R^3\) and \(\bm{Y}:=
    \begin{pmatrix}
        Y_1 & 0 & 0\\
        0 & Y_2 & 0\\
        0 & 0 & Y_3
    \end{pmatrix}\),
    \(\vec{1}:=
    \begin{pmatrix}
        1\\
        1\\
        1
    \end{pmatrix}\).
By construction,
\begin{equation}\nonumber
    \sum_{j=1}^3
        P_j\cdot
        \br{
            \frac{Y_j-y}{2\br{X-x}}
            -\frac{X-x}{2}
        }^2
        \br{
            u\br{x,y}
            +
            \br{
                \frac{Y_j-y}{2\br{X-x}}
                +\frac{X-x}{2}
            }
            v\br{x,y}
        }
    =0.
\end{equation}
Therefore, applying Lemma \ref{lem_jap_mul_diff} for the choice above, we obtain:
\begin{align*}
    &
    \prod_{j=1}^3
        K_2\br{x,y,X,Y_j}\\
    \lesssim &
    \ang{
        \sum_{j=1}^3
            P_j \cdot
            \br{
                \frac{Y_j-y}{2\br{X-x}}
                +\frac{X-x}{2}
            }^2
            \br{
                u\br{X,Y_j}
                +
                \br{
                    \frac{Y_j-y}{2\br{X-x}}
                    -\frac{X-x}{2}
                }
                v\br{X,Y_j}
            }
    }^{-N}\nonumber\\
    = &
    \Bigg<
        \br{
            \frac{\bm{Y}-y}{2\br{X-x}}+\frac{X-x}{2}
        }^2
        \br{
            \vec{U}
            +
            \br{
                \frac{\bm{Y}-y}{2\br{X-x}}-\frac{X-x}{2}
            }
            \vec{V}
        }\\
        &\wedge
        \br{
            \frac{\bm{Y}-y}{2\br{X-x}}-\frac{X-x}{2}
        }^2
        \vec{1}
        \wedge
        \br{
            \frac{\bm{Y}-y}{2\br{X-x}}-\frac{X-x}{2}
        }^2
        \br{
            \frac{\bm{Y}-y}{2\br{X-x}}+\frac{X-x}{2}
        }\vec{1}
    \Bigg>^{-N}\\
    =&: K_6\br{x,y,X,\bm{Y}},
\end{align*}
where the two vectors
\begin{equation*}
    \vec{U}=\vec{U}\br{X,\bm{Y}}:=
    \begin{pmatrix}
        u\br{X,Y_1}\\
        u\br{X,Y_2}\\
        u\br{X,Y_3}
    \end{pmatrix}
    ,\quad
    \vec{V}=\vec{V}\br{X,\bm{Y}}:=
    \begin{pmatrix}
        v\br{X,Y_1}\\
        v\br{X,Y_2}\\
        v\br{X,Y_3}
    \end{pmatrix}
\end{equation*}
are introduced.
As a brief summary, showing \eqref{eq_K_2_bdd}
reduces now to showing:
\begin{equation*}
    \nrm{K_6\br{x,y,X,\bm{Y}}}_{L^1\br{
        dxdXdydY_1dY_2dY_3
    }}
    \lesssim \lambda^{-8\delta}.
\end{equation*}
To further simplify the expression, we perform another change of variables with the corresponding Jacobian \(\eqsim 1\) to simplify the expression. Namely, we consider:
\begin{align*}
    K_6\br{X-x,y-x^2,X,y+2Y}
    =&
    \ang{
        \br{
            \frac{\bm{Y}}{x}+x
        }^2
        \br{
            \vec{\mathcal{U}}
            +
            \frac{\bm{Y}}{x}
            \vec{\mathcal{V}}
        }
        \wedge
        \br{
            \frac{\bm{Y}}{x}
        }^2
        \vec{1}
        \wedge
        \br{
            \frac{\bm{Y}}{x}
        }^2
        \br{
            \frac{\bm{Y}}{x}+x
        }\vec{1}
    }^{-N}\\
    = &
    \ang{
        \br{
            \frac{\bm{Y}}{x}+x
        }^2
        \br{
            \vec{\mathcal{U}}
            +
            \frac{\bm{Y}}{x}
            \vec{\mathcal{V}}
        }
        \wedge
        \br{
            \frac{\bm{Y}}{x}
        }^2
        \vec{1}
        \wedge
        \br{
            \frac{\bm{Y}}{x}
        }^3\vec{1}
    }^{-N}\\
    \lesssim &
    \ang{
        \br{
            \bm{Y}+x^2
        }^2
        \br{
            x
            \vec{\mathcal{U}}
            +
            \bm{Y}
            \vec{\mathcal{V}}
        }
        \wedge
        \bm{Y}^2
        \vec{1}
        \wedge
        \bm{Y}^3
        \vec{1}
    }^{-N}=:\mathcal{K}_6\br{x,y,X,\bm{Y}},
\end{align*}
where we modify the two vectors
\(\vec{\mathcal{U}}=\vec{U}\br{X,y+2\bm{Y}}\) and \(\vec{\mathcal{V}}=\vec{V}\br{X,y+2\bm{Y}}\)
accordingly. With these modifications, we successfully create the \emph{smooth} variables \(x\). Indeed, the expression
\begin{equation}\label{polyn}
    \br{
        \bm{Y}+x^2
    }^2
    \br{
        x
        \vec{\mathcal{U}}
        +
        \bm{Y}
        \vec{\mathcal{V}}
    }
    \wedge
        \bm{Y}^2
        \vec{1}
    \wedge
        \bm{Y}^3
        \vec{1}
    =\sum_{j=0}^5
    C_j\br{X,y,\bm{Y}}x^j
\end{equation}
is now a polynomial in \(x\) with its degree not exceeding \(5\).
Thus, our goal becomes now the following estimate:
\begin{equation}\label{eq_det_smooth_-8bd}
    \nrm{
        \ang{
            \sum_{j=0}^5
            C_j\br{X,y,\bm{Y}}x^j
        }^{-N}
    }_{L^1\br{dxdXdydY_1dY_2dY_3,\I^6}}
    \lesssim \lambda^{-8\delta}.
\end{equation}

\subsubsection{A Van der Corput type estimate}
To prove \eqref{eq_det_smooth_-8bd}, we rely on the following
\begin{lemma}\label{lem_van_der_cor_type}
     Let \(n,d\in\N\). Then there exists a constant\footnote{The exponent  $\delta\br{d}$ may be taken $\frac{1}{2d+2}$.} \(\delta\br{d}\in\bR{0,1}\) such that for any real-valued multivariate polynomial \(P\) on \(\R^n\) given by
    \begin{equation*}
        P\br{x}:=\sum_{\alpha:
        \abs{\alpha}\leq d}c_\alpha x^\alpha,
    \end{equation*}
    we have the following Van der Corput type estimates:
    \begin{equation*}
        \nrm{
            \ang{P\br{x}}^{-1}
        }_{L^1\br{dx,\I^n}}
        \underset{n,d}{\lesssim} \ang{\nrm{c_\alpha}_{\ell^1\br{\alpha}}}^{-\delta\br{d}}.
    \end{equation*}
\end{lemma}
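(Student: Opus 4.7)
The plan is to reduce the claimed $L^1$ bound to the sublevel-set estimate
\begin{equation*}
\abs{\{x \in \I^n : \abs{P(x)} \leq \mu\}} \lesssim_{n,d} \br{\mu/M}^{\delta\br{d}}, \qquad M := \nrm{c_\alpha}_{\ell^1\br{\alpha}},
\end{equation*}
in the nontrivial regime $M \gtrsim 1$ (else $\ang{M}^{-\delta\br{d}} \eqsim 1$ and the integral is trivially controlled by $\abs{\I^n}$). Given such a sublevel estimate, a dyadic decomposition $\ang{P\br{x}}^{-1} \lesssim \sum_{k\geq 0} 2^{-k}\1_{\{\abs{P} \leq 2^k\}}$ truncated at the saturation scale $2^k \eqsim M$ delivers the target decay $\ang{M}^{-\delta\br{d}}$.

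I would prove the sublevel estimate by induction on $n$. For the base case $n=1$, since the space of univariate polynomials of degree $\leq d$ is $(d+1)$-dimensional, the norms $\nrm{P}_{L^\infty\br{\I}}$ and $\nrm{c}_{\ell^1}$ are equivalent up to constants depending only on $d$; the classical polynomial Van der Corput lemma (iterated Rolle's theorem) then yields $\abs{\{x \in \I : \abs{P} \leq \mu\}} \lesssim_d \br{\mu/M}^{1/d}$. For the inductive step $n-1 \to n$, I would expand $P\br{x_1, x'} = \sum_{k=0}^d a_k\br{x'} x_1^k$ with $a_k \in \R[x']$ of total degree $\leq d-k$, and use pigeonhole to extract $k^*$ with $\nrm{\mathrm{coeffs}\br{a_{k^*}}}_{\ell^1} \gtrsim_d M$. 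Splitting the Fubini integration in $x'$ at a threshold $A$ on $\abs{a_{k^*}\br{x'}}$: on $\{\abs{a_{k^*}} \geq A\}$ the 1D slice estimate in $x_1$ yields measure $\lesssim \br{\mu/A}^{1/d}$, while on $\{\abs{a_{k^*}} < A\}$ the inductive sublevel bound for $a_{k^*}$ gives measure $\lesssim \br{A/M}^{\delta_{n-1}\br{d}}$. Optimizing $A$ produces the recursion $1/\delta_n\br{d} = 1/\delta_{n-1}\br{d} + d$, starting from $\delta_1\br{d} = 1/d$.

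The main obstacle is that this naive recursion yields $\delta_n\br{d} = 1/\br{nd}$, a dimension-dependent exponent, while the lemma asserts a dimension-free $\delta\br{d} = 1/\br{2d+2}$. To remove the $n$-dependence, the plan is to induct on the degree $d$ rather than on $n$: decompose $P = P_{=d} + P_{<d}$ into its homogeneous top-degree component and the rest, and perform the $\ell^1$-mass dichotomy $\nrm{c\br{P_{=d}}}_{\ell^1} \gtrsim M$ versus $\nrm{c\br{P_{<d}}}_{\ell^1} \gtrsim M$. In the first case, a generic orthogonal rotation sends some top-degree coefficient into that of $x_1^d$, which is then $\gtrsim_n M$, uniformizing the 1D base case across all $x'$ and closing the induction on $d$ with no loss of exponent. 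In the second case, I would restrict to a sub-cube of side $\rho \eqsim \br{\mu/M}^{1/d}$ on which $\abs{P_{=d}} \lesssim \mu$, so that $\{\abs{P} \leq \mu\}$ differs from $\{\abs{P_{<d}} \lesssim \mu\}$ by a negligible contribution, and apply the inductive hypothesis for degree $\leq d-1$ to the rescaled $P_{<d}\br{\rho\,\cdot}$; carefully tracking the $\ell^1$-norm under the rescaling closes the induction on $d$ with $\delta\br{d} = 1/\br{2d+2}$. As an alternative, non-elementary route, one may appeal directly to the Carbery--Wright inequality $\mu\{\abs{P} \leq t\} \lesssim d\,\br{t/\nrm{P}_{L^2\br{\mu}}}^{1/d}$ for log-concave probability measures to obtain the stronger exponent $\delta\br{d} = 1/d$ with implicit constants depending on $n,d$.
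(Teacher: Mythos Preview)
Your Carbery--Wright alternative is valid and delivers the lemma (indeed with the stronger exponent $1/d$), so the result is secured. However, your primary induction-on-$d$ approach has a genuine gap in its second case. Restricting to a single sub-cube of side $\rho \eqsim (\mu/M)^{1/d}$ near the origin only controls $P_{=d}$ there, not throughout $\I^n$; and since the dichotomy $\nrm{c(P_{<d})}_{\ell^1} \gtrsim M$ does not preclude $\nrm{c(P_{=d})}_{\ell^1} \eqsim M$ as well, the sublevel set $\{|P| \leq \mu\}$ need not concentrate near the origin. Covering $\I^n$ by $\rho^{-n}$ translated sub-cubes does not help either: on a cube centered at $x_0 \neq 0$, the recentered top part $P_{=d}(x_0 + \rho y)$ contributes lower-degree terms in $y$ of size up to $M$, so $P_{=d}$ and $P_{<d}$ cannot be cleanly separated after recentering, and the inductive hypothesis for $P_{<d}$ alone does not apply.

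The paper's proof takes a much shorter and different route. It splits on whether the constant term dominates: if $|c_0| \gg \sum_{\alpha \neq 0} |c_\alpha|$, then $|P| \gtrsim M$ pointwise on $\I^n$ by the triangle inequality and the bound is trivial. Otherwise, it eliminates the constant via the differencing trick $Q(x,y) := P(x) - P(y)$ on $\I^{2n}$ (using Lemma~\ref{lem_jap_mul_diff} to get $\langle P(x)\rangle^{-1}\langle P(y)\rangle^{-1} \lesssim \langle Q(x,y)\rangle^{-1}$, hence $\nrm{\langle P\rangle^{-1}}_{L^1(\I^n)}^2 \lesssim \nrm{\langle Q\rangle^{-1}}_{L^1(\I^{2n})}$), and then invokes Stein--Wainger \cite{sw}, Proposition~2.2 as a black box for the sublevel estimate of $Q$, which has no constant term and coefficient norm $\eqsim M$. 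This doubling is what produces the stated exponent $\delta(d) = 1/(2d+2)$. The paper thus requires no new sublevel-set argument at all; your Carbery--Wright route reaches the same conclusion with a heavier external input but a sharper exponent.
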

This can be proven easily by applying Lemma \ref{lem_jap_mul_diff} iteratively. Alternatively, observe that Lemma \ref{lem_van_der_cor_type} is the direct consequence of the Van der Corput type sub-level set estimate appearing in Proposition 2.2. in \cite{sw} were it not for the constant term \(c_0\). Below, we provide an argument to address this latter issue.
\begin{proof}
     We consider the following two cases\footnote{Throughout the proof, all the implicit constants are allowed to depend on \(n,d\).}:
    \begin{itemize}
        \item \(\abs{c_0}\gg \sum_{\alpha\neq 0}\abs{c_\alpha}\).
        Triangle inequality gives the following trivial bound:
        \begin{equation}\nonumber
            \inf_{x\in\I^n}\abs{P\br{x}}\gtrsim \abs{c_0}-\sum_{\alpha\neq 0}\abs{c_\alpha}\eqsim\nrm{c_\alpha}_{\ell^1\br{\alpha}}.
        \end{equation}
        As a direct consequence,
        \begin{equation*}
            \nrm{\ang{P\br{x}}^{-1}}_{L^1\br{dx,\I^n}}\lesssim
            \nrm{\ang{P\br{x}}^{-1}}_{L^\infty\br{dx,\I^n}}
            \lesssim \ang{\nrm{c_\alpha}_{\ell^1\br{\alpha}}}^{-1}.
        \end{equation*}

        \item \(\abs{c_0}\lesssim\sum_{\alpha\neq 0}\abs{c_\alpha}\). In this situation we first eliminate the constant term via Lemma \ref{lem_jap_mul_diff} and then  apply Proposition 2.2. in \cite{sw}:
        \begin{equation}\nonumber
            \nrm{
                \ang{P\br{x}}^{-1}
            }_{L^1\br{dx,\I^n}}^2\lesssim
            \nrm{
                \ang{Q\br{x,y}}^{-1}
            }_{L^1\br{dxdy,\I^{2n}}},\quad
            Q\br{x,y}:=P\br{x}-P\br{y}.
        \end{equation}
        The polynomial \(Q\br{x,y}\) has the same degrees as \(P\br{x}\) and no constant term. Moreover, the coefficients of \(Q\br{x,y}\) relates to those of \(P\br{x}\) in the following trivial way:
        \begin{equation}\nonumber
            Q\br{x,y}=\sum_{\alpha\neq 0}c_\alpha x^\alpha-c_\alpha y^\alpha=:\sum_{\beta}C_\beta\br{x,y}^\beta,\quad C_0=0.
        \end{equation}
        We can now apply Proposition 2.2. in \cite{sw} with \(\epsilon:=\nrm{C_\beta}_{\ell^1\br{\beta}}^{\frac{1}{d+1}}\). This gives the following estimate:
        \begin{equation}\nonumber
            \abs{
                \I^{2n}\cap \abs{Q}^{-1}\bR{0,\epsilon}
            }
            \lesssim
            \br{
                \frac{\epsilon}{\nrm{C_\beta}_{\ell^1\br{\beta}}}
            }^{\frac{1}{d}}
            =
            \nrm{
            C_\beta
            }_{
                \ell^1\br{\beta}
            }^{-\frac{1}{d+1}}
        \end{equation}
        and, as a direct consequence:
        \begin{equation}\label{vc1}
            \nrm{
                \ang{Q\br{x,y}}^{-1}
            }_{L^1\br{dxdy,\I^{2n}}}
            \lesssim
            \abs{
                \I^{2n}\cap \abs{Q}^{-1}\bR{0,\epsilon}
            }
            +
            \ang{\epsilon}^{-1}
            \lesssim
            \ang{
                \nrm{
                C_\beta
                }_{
                    \ell^1\br{\beta}
                }
            }^{-\frac{1}{d+1}}.
        \end{equation}
        Lastly, using
        \begin{equation}\nonumber
            \nrm{
            C_\beta
            }_{
                \ell^1\br{\beta}
            }
            =2\sum_{\alpha\neq 0}\abs{c_\alpha}\eqsim \nrm{c_\alpha}_{\ell^1\br{\alpha}},
        \end{equation}
        we conclude from \eqref{vc1} that
        \begin{equation*}
            \nrm{
                \ang{P\br{x}}^{-1}
            }_{L^1\br{dx,\I^n}}
            =
            \nrm{
                \ang{Q\br{x,y}}^{-1}
            }_{L^1\br{dxdy,\I^{2n}}}^{\frac{1}{2}}
            \lesssim
            \ang{
                \nrm{
                    c_\alpha
                }_{\ell^1\br{\alpha}}
            }^{-\frac{1}{2d+2}}.
        \end{equation*}
    \end{itemize}
\end{proof}

\subsubsection{Control over the coefficients of the polynomial}
Recall the definition of \(\mathcal{K}_6\br{x,y,X,\bm{Y}}\). By the multilinearity of the alternating tensor product, we may rewrite the inner expression of \(\mathcal{K}_6\br{x,y,X,\bm{Y}}\) in terms of matrix multiplications:
\begin{equation}\label{eq_wedges_mats_ini_123}
    \br{
        \bm{Y}^2+\bm{Y}\cdot 2x^2+x^4
    }
    \br{
        x
        \vec{\mathcal{U}}
        +
        \bm{Y}
        \vec{\mathcal{V}}
    }
    \wedge
        \bm{Y}^2
        \vec{1}
    \wedge
        \bm{Y}^3
        \vec{1}
   =
    \begin{pmatrix}
        1 & x^2 & x^4
    \end{pmatrix}
    \bm{M}_{\bm{Y}}
    \br{
        x
        \vec{\mathcal{U}}
        +
        \bm{Y}
        \vec{\mathcal{V}}
    },
\end{equation}
where \(\bm{M}_{\bm{Y}}\) is a \(3\times 3\) matrix consisting of \(O\br{1}\) bounded polynomial entries \(M_{i,j}\br{Y_1,Y_2,Y_3}\in\R\Br{Y_1,Y_2,Y_3}\). The above expression allows us to relate the coefficients in \eqref{polyn} with the vectors \(\vec{\mathcal{U}}\) and \(\vec{\mathcal{V}}\):
\begin{equation*}
    \vec{C}\br{X,y,\bm{Y}}:=
    \begin{pmatrix}
       C_1\br{X,y,\bm{Y}}\\
       C_3\br{X,y,\bm{Y}}\\
       C_5\br{X,y,\bm{Y}}\\
       C_0\br{X,y,\bm{Y}}\\
       C_2\br{X,y,\bm{Y}}\\
       C_4\br{X,y,\bm{Y}}
    \end{pmatrix}
    =
    \begin{pmatrix}
        \bm{M}_{\bm{Y}} & \bm{O}_{3\times 3}\\
        \bm{O}_{3\times 3} & \bm{M}_{\bm{Y}}\bm{Y}
    \end{pmatrix}
    \begin{pmatrix}
        \vec{\mathcal{U}}\\
        \vec{\mathcal{V}}
    \end{pmatrix}
    .
\end{equation*}
Now, in order to utilize \textbf{Lemma \ref{lem_van_der_cor_type}},
it suffices to derive lower bound on \(\nrm{\vec{C}\br{X,y,\bm{Y}}}\) since
\begin{equation}\nonumber
    \nrm{\vec{C}\br{X,y,\bm{Y}}}\eqsim
    \nrm{C_j\br{X,y,\bm{Y}}}_{\ell^1\br{j}}.
\end{equation}
This is achieved by showing that \(\bm{M}_{\bm{Y}}\) is non-singular ``for most" of the values of \(\br{Y_1,Y_2,Y_3}\in \I^3\), and thus the order of magnitude of the vector \(\begin{pmatrix}
        \vec{\mathcal{U}}\\
        \vec{\mathcal{V}}
    \end{pmatrix}\) is preserved under the linear transformation
    \(
    \begin{pmatrix}
        \bm{M}_{\bm{Y}} & \bm{O}_{3\times 3}\\
        \bm{O}_{3\times 3} & \bm{M}_{\bm{Y}}\bm{Y}
    \end{pmatrix}
    \).
To understand the structure of the matrix \(\bm{M}_{\bm{Y}}\), we observe that \eqref{eq_wedges_mats_ini_123} implies the following relation:
\begin{equation*}
    \br{
        \bm{Y}^2+\bm{Y}\cdot 2x^2+x^4
    }
    \vec{v}
    \wedge
        \bm{Y}^2
        \vec{1}
    \wedge
        \bm{Y}^3
        \vec{1}
   =
    \begin{pmatrix}
        1 & x^2 & x^4
    \end{pmatrix}
    \bm{M}_{\bm{Y}}
    \vec{v},\quad \vec{v}\in\R^3.
\end{equation*}
By taking \(\vec{v}=\vec{1},\;\bm{Y}\vec{1},\;\bm{Y}^2\vec{1}\), we can calculate explicitly the following matrix
\begin{equation*}
    \bm{M}_{\bm{Y}}
    \cdot
    \begin{pmatrix}
        \vec{1} & \bm{Y}\vec{1} & \bm{Y}^2\vec{1}
    \end{pmatrix}
    =
    \begin{pmatrix}
        0 & 0 & \bm{Y}^{4,2,3}\\
        2\bm{Y}^{1,2,3} & 0 & 0\\
        \bm{Y}^{0,2,3} & \bm{Y}^{1,2,3} & 0
    \end{pmatrix},
\end{equation*}
where we introduce the notation
\begin{equation*}
    \bm{Y}^{a,b,c}:=\bm{Y}^a\vec{1}\wedge \bm{Y}^b\vec{1}\wedge\bm{Y}^c\vec{1}=
    \det\begin{pmatrix}
        Y_1^a & Y_1^b & Y_1^c\\
        Y_2^a & Y_2^b & Y_2^c\\
        Y_3^a & Y_3^b & Y_3^c
    \end{pmatrix}
    .
\end{equation*}
Taking the determinant on both sides
\begin{equation*}
    \det \bm{M}_{\bm{Y}}\cdot \bm{Y}^{0,1,2}=2\bm{Y}^{4,2,3}\br{\bm{Y}^{1,2,3}}^2,
\end{equation*}
we conclude that:
\begin{equation*}
    \det \bm{M}_{\bm{Y}}=2\det\bm{Y}^4\cdot \br{\bm{Y}^{0,1,2}}^2=
    2\br{Y_1Y_2Y_3}^4 \br{Y_1-Y_2}^2\br{Y_1-Y_3}^2\br{Y_2-Y_3}^2,
\end{equation*}
which is a nontrivial polynomial expression. As a direct consequence, we can calculate the product of all the singular values of the large matrix \(\begin{pmatrix}
        \bm{M}_{\bm{Y}} & \bm{O}_{3\times 3}\\
        \bm{O}_{3\times 3} & \bm{M}_{\bm{Y}}\bm{Y}
    \end{pmatrix}\):
\begin{equation*}
    \prod_{k=1}^6\mu_k\br{\bm{Y}}=
    \abs{
        \det\bm{M}_{\bm{Y}}^2\cdot \det\bm{Y}
    }
    =
    4
    \abs{Y_1 Y_2 Y_3}^9
    \br{Y_1-Y_2}^4\br{Y_1-Y_3}^4\br{Y_2-Y_3}^4.
\end{equation*}
On the one hand, we have trivial bounds on all singular values:
\begin{equation*}
    \max_k\nrm{\mu_k\br{\bm{Y}}}_{L^\infty\br{\I^3}}
    \lesssim
    \max_{i,j}\nrm{M_{i,j}\br{Y_1,Y_2,Y_3}}_{L^\infty\br{\I^3}}
    \lesssim 1.
\end{equation*}
On the other hand, if we have a lower bound on their product:
\begin{equation*}
    \prod_{k=1}^6\mu_k\br{\bm{Y}}=
    \abs{
        \det\bm{M}_{\bm{Y}}^2\cdot \det\bm{Y}
    }\gtrsim\lambda^{-\epsilon},
\end{equation*}
we can deduce directly that
\begin{equation*}
    \lambda^{-\epsilon}
    \lesssim
        \min_k\mu_k\br{\bm{Y}}
    \lesssim
        \max_k\mu_k\br{\bm{Y}}
    \lesssim 1.
\end{equation*}
As a direct consequence, we have the operator norm estimate:
\begin{equation*}
    \nrm{\begin{pmatrix}
        \bm{M}_{\bm{Y}} & \bm{O}_{3\times 3}\\
        \bm{O}_{3\times 3} & \bm{M}_{\bm{Y}}\bm{Y}
    \end{pmatrix}^{-1}}\lesssim \lambda^\epsilon.
\end{equation*}
This transfers to a lower bound on the norm of the coefficient vector:
\begin{equation*}
    \lambda
    \eqsim
    \nrm{
        \begin{pmatrix}
            \vec{\mathcal{U}}\\
            \vec{\mathcal{V}}
        \end{pmatrix}
    }
    \leq
     \nrm{\begin{pmatrix}
        \bm{M}_{\bm{Y}} & \bm{O}_{3\times 3}\\
        \bm{O}_{3\times 3} & \bm{M}_{\bm{Y}}\bm{Y}
    \end{pmatrix}^{-1}}
    \cdot
    \nrm{\vec{C}\br{X,y,\bm{Y}}}\lesssim
    \lambda^\epsilon
    \nrm{C_j\br{X,y,\bm{Y}}}_{\ell^1\br{j}}
    .
\end{equation*}
We can thus conclude:
\begin{equation}\label{eq_detM_R_dich}
    \lambda^{-\epsilon}\lesssim
    \abs{
        \det\bm{M}_{\bm{Y}}^2\cdot \det\bm{Y}
    }
    \implies
    \lambda^{1-\epsilon}\lesssim
    \nrm{C_j\br{X,y,\bm{Y}}}_{\ell^1\br{j}}
    .
\end{equation}

\subsubsection{Concluding our lemma: the proof of \eqref{eq_det_smooth_-8bd}}
Let \(\epsilon>0\) be a constant to be chosen later and consider the set
\begin{equation*}
    \mathcal{S}_3:=\set{\br{Y_1,Y_2,Y_3}\in\I^3}{\lambda^\epsilon
        \det\bm{M}_{\bm{Y}}^2\cdot \det\bm{Y}
    \in \I}.
\end{equation*}
Naturally, the analysis of the left-hand-side of \eqref{eq_det_smooth_-8bd} appeals to Lemma \ref{lem_van_der_cor_type} using the decomposition
\begin{align}%
    &
    \nrm{
        \ang{
            \sum_{j=0}^5
            C_j\br{X,y,\bm{Y}}x^j
        }^{-N}
    }_{L^1\br{dxdXdydY_1dY_2dY_3,\I^6}}\nonumber\\
    \leq &
    \nrm{
        \ang{
            \sum_{j=0}^5
            C_j\br{X,y,\bm{Y}}x^j
        }^{-N}
    }_{L^1\br{dxdXdy,\I^3}\otimes L^1\br{dY_1dY_2dY_3,\mathcal{S}_3}}\label{eq_R_est_Sing}\\
    + &
    \nrm{
        \ang{
            \sum_{j=0}^5
            C_j\br{X,y,\bm{Y}}x^j
        }^{-N}
    }_{L^1\br{dxdXdy,\I^3}\otimes L^1\br{dY_1dY_2dY_3,\I^3\setminus\mathcal{S}_3}}\label{eq_R_est_Poly}.
\end{align}%

For \eqref{eq_R_est_Sing}, we use the trivial bound and dominate the sublevel set:
\begin{equation*}
    \eqref{eq_R_est_Sing}\leq \abs{\mathcal{S}_3}  \leq
    \nrm{
        \ang{
            \lambda^\epsilon
            \det\bm{M}_{\bm{Y}}^2\cdot \det\bm{Y}
        }^{-1}
    }_{L^1\br{dY_1dY_2dY_3,\I^3}}
    \lesssim \lambda^{ -\epsilon \delta\br{39} }.
\end{equation*}
As for \eqref{eq_R_est_Poly}, we use the fact that
\begin{equation*}
    \br{Y_1,Y_2,Y_3}\in \I^3\setminus \mathcal{S}_3\implies \lambda^{-\epsilon}\lesssim \abs{\det\bm{M}_{\bm{Y}}^2\cdot \det\bm{Y}},
\end{equation*}
the implication \eqref{eq_detM_R_dich}, and Lemma \ref{lem_van_der_cor_type} to obtain:
\begin{equation*}
    \eqref{eq_R_est_Poly}\leq
    \nrm{
        \nrm{
            \ang{
                \sum_{j=0}^5C_j
                    \br{X,y,\bm{Y}}x^j
            }^{-1}
        }_{L^1\br{dx,\I}}
    }_{L^\infty\br{dXdydY_1dY_2dY_3,\I\times \br{\I^3\setminus \mathcal{S}_3}}}\hspace{-5em}
    \lesssim
    \lambda^{\br{\epsilon-1} \delta\br{5}}.
\end{equation*}
Optimizing the parameter gives \(\epsilon=\frac{\delta\br{5}}{\delta\br{5}+\delta\br{39}}\) and that
\begin{equation*}
    \nrm{
        \ang{
            \sum_{j=0}^5C_j\br{X,y,\bm{Y}}x^j
        }^{-N}
    }_{L^1\br{dxdydXdY_1dY_2dY_3,\I^6}}
    \lesssim \lambda^{-\frac{\delta\br{5}\delta\br{39}}{\delta\br{5}+\delta\br{39}}}
    =:\lambda^{-8\delta}.
\end{equation*}
Finally, we conclude:
\begin{equation*}
    \nrm{K}_{L^1}\lesssim\lambda^{-\frac{\delta\br{5}\delta\br{39}}{8\delta\br{5}+8\delta\br{39}}}.
\end{equation*}
This completes the proof of Lemma \ref{lem_tf_cor_lev_set_ana} with \(\delta=\frac{\delta\br{5}\delta\br{39}}{8\delta\br{5}+8\delta\br{39}}\).

\section{Final Remarks}

\subsection{Incidence geometry (I): $L^p$ improving estimates for the averaging operator along parabola}\label{Incimprovest}

Our goal in this section is to provide a simple, short proof of a weaker version of Lemma \ref{thm_Lp_impro}, that serves though as a good enough substitute for obtaining the full strength of our Main Theorem \ref{mainresult}.

We first notice that after a dualization argument, \eqref{Improv} is equivalent with
    \begin{equation}\label{Limp}
    \L_\p (f,g):=\int_{\R^2} \int_{[1,2]} f(x-t, y-t^2)\,g(x,y)\,dt\,dx\,dy\lesssim \nrm{f}_{L^{\frac{3}{2}}(\R^2)}\,\nrm{g}_{L^{\frac{3}{2}}(\R^2)}\,,
    \end{equation}
with $f$ and $g$ positive Lebesgue measurable functions.

Next, via an almost orthogonality reasoning followed by a density argument, \eqref{Limp} is essentially equivalent with showing that for any
$F,\,G\subseteq [1,2]$ measurable sets one has
   \begin{equation}\label{Limp1}
    \L_\p (\chi_F,\chi_G)\lesssim |F|^{\frac{2}{3}}\,|G|^{\frac{2}{3}}\,.
    \end{equation}
As mentioned earlier, in order to provide a simple and concise argument, we will only aim for a weaker version of \eqref{Limp1} given by
   \begin{equation}\label{Limp1simple}
    \L_\p (\chi_F,\chi_G)\lesssim |F|^{\frac{5}{8}}\,|G|^{\frac{5}{8}}\,.
    \end{equation}
Indeed, in view of \eqref{eq_I_fg_Linfty}, any bound of the form  $|F|^{\a}\,|G|^{\a}$ with $\a>\frac{1}{2}$ is enough for our purposes. The approach to \eqref{Limp1simple} appeals to some basic incidence geometry arguments. With some non-trivial effort that relies on the full strength of the Szemer\'edi-Trotter point-line incidence result, \cite{SzT83}, the reasonings below may be upgraded in order to achieve \eqref{Limp1}. \footnote{Indeed, Szemer\'edi-Trotter corresponds to the precise numerology in the RHS of \eqref{Limp1}.}

With these being said, we pass to the proof of \eqref{Limp1simple}. We first notice that via another density argument, wlog, we can assume that both $F$ and $G$ consist of a union of disjoint intervals of the form $I_{p,q}:=I_p\times I_q:=\left[\frac{p-1}{2^m}, \frac{p}{2^m}\right)\times \left[\frac{q-1}{2^m}, \frac{q}{2^m}\right)$ where here $m\in\N$ is a fixed large parameter and $p,\,q\in\{1,\ldots, 2^m\}$. That is
  \begin{equation}\label{FGsets}
  F:=\bigcup_{(p,q)\in\mathcal{A}_l} I_{p,q}\quad\textrm{and}\quad G:=\bigcup_{(p,q)\in\mathcal{B}_s} I_{p,q}\,,
  \end{equation}
with $l,s\in \N$, $0\leq l,\,s\leq 2m$, such that
  \begin{equation}\label{FGsets1}
  |F|=2^{-l},\quad \# \mathcal{A}_l=2^{2m-l}\qquad\textrm{and}\qquad |G|=2^{-s},\quad \# \mathcal{B}_s=2^{2m-s}\,.
  \end{equation}
With these notations we notice that
  \begin{equation}\label{est}
\L_\p (\chi_F,\chi_G)\approx 2^{-3m}\,\sum_{(p,r)\in \mathcal{A}_l}\: \sum_{q\sim 2^m} \mathds{1}_{\mathcal{B}_s}\left(p+q, r+\frac{q^2}{2^m}\right)=: 2^{-3m}\,\mathcal{I}_{l,s}\,.
  \end{equation}
Now, on the one hand, a trivial double counting argument in \eqref{est} gives
\begin{equation}\label{est1}
\L_\p (\chi_F,\chi_G) \lesssim  2^{-3m}\,\min\left\{(\#\mathcal{A}_l) (\#\mathcal{B}_s),\: 2^m\,(\#\mathcal{A}_l),\: 2^m\,(\#\mathcal{B}_s)\right\}
 \approx\min\left\{ 2^{m-l-s},\:2^{-l},\:2^{-s}\right\}\,.
\end{equation}
On the other hand, we can visualize $\mathcal{I}_{l,s}$ in \eqref{est} as representing the incidences between
\begin{itemize}
\item the set of discretized\footnote{By convention if $Q=(p,q)$ then $I_Q:=I_{p,q}$.} ``points" $\mathcal{R}_{\B_s}:=\{I_Q\}_{Q\in \mathcal{B}_s}$;
\medskip
\item the set of discretized ``parabolas"  $\p_{\mathcal{A}_l}:=\{\p_Q\}_{Q\in \mathcal{A}_l}$ where here
$$\p_Q=\bigcup_{q\sim 2^m} I_{Q+(q,\frac{q^2}{2^m})}\,.$$
\end{itemize}

For $Q\in \mathcal{R}_{\B_s}$ we set
\begin{equation}\label{inc1}
\mathfrak{n}_Q:=\#\{R\in \mathcal{R}_{\mathcal{A}_l} \,|\, \p_R\:\textrm{is incident to}\:Q\}\,.
\end{equation}
Using now \eqref{est1} and \eqref{inc1}, via Cauchy--Schwarz, we have
\begin{equation}\label{i1}
\mathcal{I}_{l,s}=\sum_{Q\in \mathcal{R}_{\B_s}} \mathfrak{n}_Q= \# \B_s\,+\, \sum_{{Q\in \mathcal{R}_{\B_s}}\atop{\mathfrak{n}_Q\geq 2}} \mathfrak{n}_Q
\lesssim \# \B_s\,+\, \left(\sum_{{Q\in \mathcal{R}_{\B_s}}\atop{\mathfrak{n}_Q\geq 2}} \mathfrak{n}_Q^2\right)^{\frac{1}{2}}\,(\# \B_s)^{\frac{1}{2}}\,.
\end{equation}
Next, for $Q\in \mathcal{R}_{\B_s}$ and $P,\,R\in \mathcal{R}_{\mathcal{A}_l}$, we set
$\mathfrak{n}_Q(P,R)=1$ if $\p_P$ and  $\p_R$ are incident to $Q$ and $\mathfrak{n}_Q(P,R)=0$ otherwise. It is then not hard to see that
\begin{equation}\label{i2}
\sum_{{Q\in \mathcal{R}_{\B_s}}\atop{\mathfrak{n}_Q\geq 2}} \mathfrak{n}_Q^2\leq
\sum_{P,R\in \mathcal{R}_{\mathcal{A}_l}} \sum_{Q\in \mathcal{R}_{\B_s}} \mathfrak{n}_Q(P,R)\,.
\end{equation}
Once at this point,  we appeal to the simple but key geometric observation
\begin{equation}\label{i3}
\sum_{Q\in \mathcal{R}_{\B_s}} \mathfrak{n}_Q(P,R)\lesssim \min\left\{2^{2m-s},\,\frac{2^m}{|P-R|+1}\right\}\,.
\end{equation}
Combining \eqref{i1}--\eqref{i3} and using the symmetry in the sets $\mathcal{A}_l$ and $\mathcal{B}_s$ we conclude that
\begin{equation}\label{eq_key}
\mathcal{I}_{l,s}\lesssim \min\left\{(\#\mathcal{A}_l)(\#\mathcal{B}_s),\: 2^{\frac{m}{2}}\,(\#\mathcal{A}_l)^{\frac{3}{4}}\,(\#\mathcal{B}_s)^{\frac{1}{2}},\: 2^{\frac{m}{2}}\,(\#\mathcal{B}_s)^{\frac{3}{4}}\,(\#\mathcal{A}_l)^{\frac{1}{2}}\right\}\,.
\end{equation}
Putting together \eqref{est} and \eqref{eq_key}, via a geometric mean argument, we conclude
  \begin{equation}\label{estf}
\L_\p (\chi_F,\chi_G)\lesssim 2^{-3m}\,2^{\frac{m}{2}}\,(\#\mathcal{A}_l)^{\frac{5}{8}}\,(\#\mathcal{B}_s)^{\frac{5}{8}}\approx |F|^{\frac{5}{8}}\,|G|^{\frac{5}{8}}\,,
  \end{equation}
thus proving our desired estimate \eqref{Limp1simple}.

\subsection{Incidence geometry (II): strong $L^2$-decay bounds}\label{StrongL2}

In this section, we provide a very brief outline on how, under hypothesis \eqref{eq_f_a_E_cond}, we can obtain the direct (with no use of interpolation) $L^2$-strong decay bound
    \begin{equation}\label{glstrong}
        \abs{\Lambda(f,g)}
        \lesssim
        \lambda^{-\sigma}
        \nrm{f}_{L^2}\,\nrm{g}_{L^2}\,,
    \end{equation}
where here $\Lambda(f,g)=\ang{\mathcal{C}^{\br{a}}f,g}$.

Indeed, by taking $\lambda\approx 2^m$, it is not hard to see that under requirement \eqref{eq_f_a_E_cond}, one may reduce the analysis of  $\Lambda(f,g)$  to the situation when $\nrm{f}_{2}=\nrm{g}_{2}=1$ and, essentially, the moral spatial support of $f$ and $g$ is contained in the unit cube and the frequency support satisfies $\textrm{supp}\,\hat{f},\,\textrm{supp}\,\hat{g}\subseteq [2^m,\,2^{m+1}]^2$. This latter condition further implies that both $f$ and $g$ are morally constant on cubes of the form $I_{p,r}$ with $p,\,r\in\{1,\ldots, 2^m\}$. Hence, we further deduce that  for any $p,r$ and $x\in I_{p,r}$ we essentially have
$$|f(x)|^2\approx \frac{1}{|I_{p,r}|}\int_{I_{p,r}}|f|^2\,.$$
As a consequence,  for $l,s\geq 0$, we are naturally led to consider the level sets
\begin{align}\label{defAB}
    \mathcal{A}_l:=\left\{(p,r)\,\Big|\,\frac{1}{|I_{p,r}|}\int_{I_{p,r}}|f|^2\approx 2^l\, \|f\|_2^2\right\}\,,\nonumber\\
    \mathcal{B}_s:=\left\{(p,r)\,\Big|\,\frac{1}{|I_{p,r}|}\int_{I_{p,r}}|g|^2\approx 2^s\, \|g\|_2^2\right\}\,,
\end{align}%
and, following a similar line of thought with that in \cite{HL(CTHT)} (see also \cite{LVBilHilbCarl}), one can involve a \emph{spatial sparse-uniform dichotomy}, and, for some properly chosen $\ep>0$, define
\begin{itemize}
\item the spatial uniform component of $f$ by
\begin{equation}\label{funifU}
   f^U:=\sum_{l\leq \ep m} \sum_{(p,r)\in \mathcal{A}_l} f \chi_{I_{p,r}}\,;
\end{equation}%

\item the spatial sparse component of $f$ by
\begin{equation}\label{funifS}
   f^S:=\sum_{l> \ep m} \sum_{(p,r)\in \mathcal{A}_l} f \chi_{I_{p,r}}\,,
\end{equation}%
\end{itemize}
withe the obvious analogue for $g.$

With these, we let
\begin{equation}\label{dec}
   \Lambda(f,g)=\Lambda_U(f,g)\,+\,\Lambda_S(f,g)\,,
\end{equation}%
where
\begin{itemize}
\item the uniform component $\Lambda_U$ is given by $\Lambda_U(f,g):=\Lambda(f^U,g^U)$;

\item the sparse component $\Lambda_S$ is given by $\Lambda_S(f,g):=\Lambda(f^U,g^S)\,+\,\Lambda(f^S,g^U)\,+\,\Lambda(f^S,g^S)$.
\end{itemize}
Once at this point, we observe that the treatment of the uniform component $\Lambda_U(f,g)$ can be substituted to the treatment of the original $\Lambda(f,g)$ in the earlier sections but under more favorable conditions since we now have an $L^{\infty}$ control of the input functions. Indeed, we may pretend that both input functions in $\Lambda_U(f,g)$ are essentially in $L^{\infty}$  since $\|f^U\|_{L^{\infty}}\lesssim 2^{\ep m} \|f\|_2$ (and similarly for $g$) since the factor $2^{\ep m}$ is harmless.\footnote{Indeed, for $\ep>0$ small properly chosen, the $2^{\ep m}$ factor can be absorbed into the decaying factor appearing in \eqref{glstrong}.}

For the remaining sparse component, we use \eqref{FGsets} in order to further define--with the obvious correspondences--the sets $F_l$ and $G_s$, and notice that based on \eqref{Limp1simple}

\begin{equation}\label{lamsp}
    |\Lambda_S(f,g)|\lesssim \sum_{l+s\geq \ep m} 2^{\frac{l}{2}}\,2^{\frac{s}{2}}\,\Lambda_\p(\chi_{F_l},\chi_{G_s})\lesssim
    \sum_{l+s\geq \ep m} 2^{\frac{l}{2}}\,2^{\frac{s}{2}} |F_l|^{\frac{5}{8}}\, |G_s|^{\frac{5}{8}}\lesssim \sum_{l+s\geq \ep m} 2^{-\frac{l+s}{8}}\lesssim 2^{-\frac{\ep m}{10}}\,.
\end{equation}

\subsection{Treatment of the non-resonant Carleson-Radon transform $C_{[\vec{\a}]}$ for general $\vec{\a}$}\label{genalfa}

For the general $\vec{\a}$ setting, we first notice that the corresponding variant of \(M_\para, H^\ast_\para\) are of strong type \(\br{p,p}\), (\cite{sw1}), and thus one can perform the same reduction steps as in Section \ref{Genconsid} in order to reduce matters to the analogue of Theorem \ref{thm_main_loc}. Focussing thus on the latter, via a simple change of variable one reaches to
\begin{equation*}
    \mathcal{C}^{\br{a}}f\br{x,y}:=
    \int
        f\br{x-t,y-t^\alpha}e\br{a\br{x,y}t^\beta}\rho\br{t}
    dt
\end{equation*}
with \(\alpha:=\alpha_2/\alpha_1\neq 1\) and \(\beta:=\alpha_3/\alpha_1\in \R\setminus\BR{1,\alpha}\). Due to the symmetry between the \(x,y\) variables, one may further assume \(\alpha>1\).

Next, we notice that in order to derive the wave-packet discretized model form in the absence of polynomial structures, one can employ an almost identical argument with the one in Section \ref{subsec_wp}: indeed, due to the choice of the linearizing scale $(\sqrt{\l})^{-1}$,  the Taylor expansion is expressed as the linear term plus a second-order error term in the integral form. With some straightforward modifications, one then obtains
    \begin{equation*}
        \sum_{\substack{p,q,r\\
        u,v,w:\\
        \frac{\sqrt{\lambda}}{3}\leq r\leq 3\sqrt{\lambda}
        }}\hspace{-1em}
        \frac{
            \abs{
                \ang{\widetilde{f},\Phi_{p-r,q-r\br{\frac{r}{\sqrt{\lambda}}}^{\alpha-1},u,v}}
                \ang{
                    \overline{e\br{a\br{\frac{r}{\sqrt{\lambda}}}^{\beta}}}\chi\br{\frac{a}{\sqrt{\lambda}}-w}\chi_E,
                    \!\Psi_{p,q,u,v}
                }
            }
        }{
            \sqrt{\lambda}
            \ang{
                \beta w\br{\frac{r}{\sqrt{\lambda}}}^{\beta-1}
                -\alpha v\br{\frac{r}{\sqrt{\lambda}}}^{\alpha-1}
                -u
            }^N
        }
    \end{equation*}
where the two functions satisfy \(\big\vert\widehat{\widetilde{f}}\big\vert\lesssim \big\vert\widehat{f}\big\vert\) and \(\abs{\chi_E}\lesssim\abs{\1_E}\).

Assuming now that the analogue of Lemma \ref{lem_tf_cor_lev_set_ana} holds, the rest requires little adjustment. Indeed, to treat the above model, the only difference relative to the monomial case is the occasional use of the mean value theorem in various estimates, for example, in the analogues of \eqref{exp} and \eqref{eq_EE_w2v}. Moreover, we remark that the \(L^p\) improving estimates for averaging operators along planar curves are well-understood and in particular that Lemma \ref{thm_Lp_impro} holds in our general settings. Alternatively, one can also easily adapt the argument in Section \ref{Incimprovest} in order to obtain a weaker---albeit strong enough for our purposes---\(L^p\) improving estimate.

With these clarifications, it only remains to address the general version of Lemma \ref{lem_tf_cor_lev_set_ana}:

\begin{equation*}
  \nrm{K\br{x,y,t}}_{L^1\br{dxdydt,\I^2\times\A}} := \nrm{
        \ang{
            w\br{x+t,y+t^\alpha}t^{\beta-1}+
            v\br{x,y}t^{\alpha-1}+
            u\br{x,y}
        }^{-N}
    }_{L^1\br{dxdydt,\I^2\times\A}}\lesssim\lambda^{-\delta},
\end{equation*}
where, as before, we make the same assumption that \(\abs{u}\vee\abs{v}\vee\abs{w}\eqsim\lambda\).

Via the same reduction step, we may assume without loss of generality that \(\abs{u}\vee\abs{v}\eqsim \lambda\). Since \(\alpha,\beta\) may not be integers, we make a few adjustments while decoupling the smooth variables from the rough variables. We start by introducing an average in the \(t\) variables within a \(\lambda^{-\frac{1}{3}}\)--scale neighbourhood.
\begin{equation}\label{eq_K_les_K_avg}
    \nrm{K\br{x,y,t}}_{L^1\br{dxdydt,\I^2\times \A}}\lesssim \nrm{K\br{x,y,t+\frac{\tau}{\lambda^{\frac{1}{3}}}}}_{L^1\br{dxdyd\tau dt,\I^3\times \A}}.
\end{equation}
We then perform similar shifting, doubling, and then unshifting to obtain the following estimate:
\begin{align*}
    \eqref{eq_K_les_K_avg}^2
    \lesssim &
    \nrm{
        \prod_{j=0,1}
            K\br{x-\br{t+\frac{\tau_j}{\lambda^{\frac{1}{3}}}},y-\br{t+\frac{\tau_j}{\lambda^{\frac{1}{3}}}}^\alpha,t+\frac{\tau_j}{\lambda^{\frac{1}{3}}}}
    }_{L^1\br{dxdyd\tau_0 d\tau_1 dt}}\nonumber\\
    \lesssim &
    \lambda^{-2\delta}
    +
    \nrm{
        \prod_{j=0,1}
            K\br{x-t_j,y-t_j^\alpha,t_j}
    }_{
        L^1\br{
            \lambda^{\frac{1}{3}}dxdydt_0dt_1,
            \lambda^{-\frac{1}{3}-2\delta}\lesssim t_0-t_1 \lesssim \lambda^{-\frac{1}{3}}
        }
    }\nonumber\\
    \lesssim &
    \lambda^{-2\delta}
    +
    \nrm{
        K\br{x,y,t_+}
        K\br{
            x+t_+-t_-,
            y
            +t_+^\alpha
            -t_-^\alpha,
            t_-
        }
    }_{
        L^1\br{
            \lambda^{\frac{1}{3}}dxdydt_+dt_-,
            \lambda^{-\frac{1}{3}-2\delta}\lesssim t_+-t_- \lesssim \lambda^{-\frac{1}{3}}
        }
    }.
\end{align*}
For notation clarity, we again write
\begin{equation*}
    X:=x+t_+-t_-,\quad
    Y:=y
    +t_+^\alpha
    -t_-^\alpha.
\end{equation*}
We apply Lemma \ref{lem_jap_mul_diff} to further dominate the inner expression
\begin{equation*}
    K\br{x,y,t_+}
    K\br{
        X,Y,t_-
    }
    \lesssim
    \ang{
        \bm{t}^{\beta-1} \vec{1}
        \wedge
        \br{
            \vec{u}+
            \bm{t}^{\alpha-1}\vec{v}
        }
    }^{-N},
\end{equation*}
As a brief summary, we have obtained the estimate
\begin{equation*}
    \nrm{K}^2_{L^1}\lesssim
    \lambda^{-2\delta}
    +
    \nrm{
        \ang{
            \bm{t}^{\beta-1} \vec{1}
            \wedge
            \br{
                \vec{u}+
                \bm{t}^{\alpha-1}\vec{v}
            }
        }^{-N}
    }_{
    L^1\br{
            \lambda^{\frac{1}{3}} dxdydt_+ dt_-,
            \lambda^{-\frac{1}{3}-2\delta}\lesssim t_+-t_- \lesssim \lambda^{-\frac{1}{3}}
        }
    }.
\end{equation*}
Following the same line of logic as in the monomial case, we perform a change of variables, expressing \(\bm{t}\) in terms of \(x,y,X,Y\).
On the one hand, a direct calculation on the Jacobian shows that
\begin{equation*}
    \abs{
        \frac{
            \partial \br{X,Y}
        }{
            \partial \br{t_+,t_-}
        }
    }
    =\abs{
        \det
        \begin{pmatrix}
            1 & \alpha t_+^{\alpha-1}\\
            -1 & \alpha t_-^{\alpha-1}
        \end{pmatrix}
    }
    \eqsim \alpha\br{\alpha-1}\abs{t_+-t_-}\eqsim X- x.
\end{equation*}
Hence, we have
\begin{equation*}
    \lambda^{\frac{1}{3}}dxdydt_+dt_-
    \eqsim
    \lambda^{\frac{1}{3}} \frac{dxdX}{X-x}dydY
\end{equation*}
within the following regions:
\begin{equation*}
    \lambda^{-\frac{1}{3}-2\delta}
    \lesssim X-x \eqsim Y-y
    \lesssim \lambda^{-\frac{1}{3}}.
\end{equation*}
On the other hand, one cannot expect a closed form expressing \(\bm{t}\) precisely. However, due to the fact that
\begin{equation*}
    \abs{z-z'}\lesssim 1\implies \ang{z}^{-N}\eqsim \ang{z'}^{-N}
\end{equation*}
and that \(\abs{u}\vee\abs{v}\eqsim \lambda\), an approximation of \(\bm{t}\) precise up to an \(O\br{1/\lambda}\) error is sufficient for our purpose. To achieve this, we consider the average \(T:=\frac{t_++t_-}{2}\) and rewrite the expression via Taylor expansion
\begin{align*}
    Y-y= &
    \br{T+\frac{X-x}{2}}^\alpha-\br{T-\frac{X-x}{2}}^\alpha\\
    =&
    \alpha T^{\alpha-1} \br{X-x}
    +\frac{\alpha\br{\alpha-1}\br{\alpha-2}}{24}T^{\alpha-3}\br{X-x}^3
    +O\br{
        \begin{pmatrix}
            \alpha\\
            5
        \end{pmatrix}
    \br{X-x}/\lambda}.
\end{align*}
Divide the above expression by \(\alpha \br{X-x}\) gives the following:
\begin{align*}
    \frac{Y-y}{\alpha \br{X-x}}= & T^{\alpha-1}+ \frac{\br{\alpha-1}\br{\alpha-2}}{24}T^{\alpha-3}\br{X-x}^2 +
    O\br{
    \begin{pmatrix}
        \alpha-1\\
        4
    \end{pmatrix}
    /\lambda
    }\\
    = & \br{
        T^2+\frac{\alpha-2}{12}\br{X-x}^2
    }^{\frac{\alpha-1}{2}}+
    O\br{
    \begin{pmatrix}
        \alpha-1\\
        3
    \end{pmatrix}
    /\lambda
    }.
\end{align*}
Solving for \(T\) yields the following:
\begin{equation*}
    T=\br{\frac{Y-y}{\alpha \br{X-x}}}^{\frac{1}{\alpha-1}}-\frac{\alpha-2}{24}\br{\frac{Y-y}{\alpha \br{X-x}}}^{-\frac{1}{\alpha-1}}\br{X-x}^2 +
    O\br{
    \begin{pmatrix}
        \alpha-1\\
        2
    \end{pmatrix}
    /\lambda
    }.
\end{equation*}
Setting
\begin{equation*}
    T\br{x,y}:=\br{\frac{y}{\alpha x}}^{\frac{1}{\alpha-1}}-\frac{\alpha-2}{24}\br{\frac{y}{\alpha x}}^{-\frac{1}{\alpha-1}}x^2
    ,
\end{equation*}
we can now write
\begin{equation*}
    \bm{t}=
    T\br{X-x,Y-y}+\frac{\bm{j}\br{X-x}}{2}
    +
    O\br{
    \begin{pmatrix}
        \alpha-1\\
        2
    \end{pmatrix}
    /\lambda
    }.
\end{equation*}
Using the approximation above, we obtain
\begin{align*}
    \ang{
        \bm{t}^{\beta-1} \vec{1}
        \wedge
        \br{
            \vec{u}+
            \bm{t}^{\alpha-1}\vec{v}
        }
    }^{-N}
    \eqsim
    \Bigg<
        &
        \br{T\br{X-x,Y-y}+\frac{\bm{j}\br{X-x}}{2}}^{\beta-1}\vec{1}\\
        \wedge &
        \br{
            \vec{u}+
            \br{T\br{X-x,Y-y}+\frac{\bm{j}\br{X-x}}{2}}^{\alpha-1}\vec{v}
        }
    \Bigg>^{-N}=:K_2\br{x,y,X,Y}.
    \end{align*}
As a brief summary, it suffices to show the following:
\begin{equation*}
    \nrm{
        K_2\br{x,y,X,Y}
    }_{L^1\br{
        \lambda^{\frac{1}{3}}
        \frac{dxdX}{X-x}dy dY,
        \lambda^{-\frac{1}{3}-2\delta}\lesssim X-x\eqsim Y-y\lesssim \lambda^{-\frac{1}{3}}
    }}
    \lesssim \lambda^{-2\delta}.
\end{equation*}
We can mimic the argument for \(\br{\alpha,\beta}=\br{2,3}\) case. By tripling the \(Y\) variables, the above estimate becomes a consequence of the following inequality:
\begin{equation*}
    \nrm{
        \prod_{j=1}^3
        K_2\br{x,y,X,Y_j}
    }_{
        L^1\br{
            \lambda^{\frac{4}{3}} dxdXdy dY_1dY_2dY_3,
            \lambda^{-\frac{1}{3}-2\delta}\lesssim X-x\eqsim Y_j-y\lesssim \lambda^{-\frac{1}{3}}
        }
    }
    \lesssim \lambda^{-8\delta}
    .
\end{equation*}
Moreover, by Lemma \ref{lem_jap_mul_diff}, we further dominate the inner expression
\begin{align*}
    &
    \prod_{j=1}^3
    K_2\br{x,y,X,Y_j}\\
    \lesssim
    \Bigg<
        &
        \br{
            T\br{X-x,\bm{Y}-y}+\frac{X-x}{2}
        }^{\beta-1}
        \br{
            \vec{U}
            +
            \br{
                T\br{X-x,\bm{Y}-y}-\frac{X-x}{2}
            }^{\alpha-1}
            \vec{V}
        }\\
        \wedge &
        \br{
            T\br{X-x,\bm{Y}-y}-\frac{X-x}{2}
        }^{\beta-1}
        \vec{1}\\
        \wedge &
        \br{
            T\br{X-x,\bm{Y}-y}-\frac{X-x}{2}
        }^{\beta-1}
        \br{
            T\br{X-x,\bm{Y}-y}+\frac{X-x}{2}
        }^{\alpha-1}\vec{1}
    \Bigg>^{-N}=:K_6\br{x,y,X,\bm{Y}}.
\end{align*}
Thus, it remains to show
\begin{equation*}
    \nrm{
        K_6\br{x,y,X,\bm{Y}}
    }_{
        L^1\br{
            \lambda^{\frac{4}{3}} dxdXdy dY_1dY_2dY_3,
            \lambda^{-\frac{1}{3}-2\delta}\lesssim X-x\eqsim Y_j-y\lesssim \lambda^{-\frac{1}{3}}
        }
    }
    \lesssim \lambda^{-8\delta}
    .
\end{equation*}
We consider two scenarios.
\subsubsection{Parabola case}
If \(\alpha=2\), we may utilize the alternating nature of \(\wedge\) to simplify the expression
\begin{align*}
    K_6\br{x,y,X,\bm{Y}}
    =
    \Bigg<
        &
        \br{
            \frac{\bm{Y}-y}{2\br{X-x}}+\frac{X-x}{2}
        }^{\beta-1}
        \br{
            \vec{U}
            +
            \br{
                \frac{\bm{Y}-y}{2\br{X-x}}-\frac{X-x}{2}
            }
            \vec{V}
        }\\
        \wedge &
        \br{
            \frac{\bm{Y}-y}{2\br{X-x}}-\frac{X-x}{2}
        }^{\beta-1}
        \vec{1}
        \wedge
        \br{
            \frac{\bm{Y}-y}{2\br{X-x}}-\frac{X-x}{2}
        }^{\beta}\vec{1}
    \Bigg>^{-N}.
\end{align*}
After a change of variable, we have conditions \(X,y\in\I\) and \(\lambda^{-2\delta}\lesssim x \eqsim Y_j\lesssim 1\) in the following expression:
\begin{align*}
    &
    K_6\br{X-\frac{x}{\lambda^{\frac{1}{3}}},y-\frac{x^2}{\lambda^{\frac{2}{3}}},X,y+\frac{2\bm{Y}}{\lambda^{\frac{1}{3}}}}
    \\
    = &
    \ang{
        \br{
            \frac{\bm{Y}}{x}+
            \frac{x}{\lambda^{\frac{1}{3}}}
        }^{\beta-1}
        \br{
            \vec{\mathcal{U}}
            +
            \frac{\bm{Y}}{x}\cdot
            \vec{\mathcal{V}}
        }
        \wedge
        \br{
            \frac{\bm{Y}}{x}
        }^{\beta-1}
        \vec{1}
        \wedge
        \br{
            \frac{\bm{Y}}{x}
        }^{\beta}\vec{1}
    }^{-N}\\
    \eqsim &
    \ang{
        \br{
            \frac{\bm{Y}}{x}+
            \frac{x}{\lambda^{\frac{1}{3}}}
        }^{\beta-1}
        \cdot
        \br{
            \frac{\bm{Y}}{x}
        }^{2-\beta}
        \br{
            \vec{\mathcal{U}}
            +
            \frac{\bm{Y}}{x}\cdot
            \vec{\mathcal{V}}
        }
        \wedge
        \br{
            \frac{\bm{Y}}{x}
        }^2
        \vec{1}
        \wedge
        \br{
            \frac{\bm{Y}}{x}
        }^3\vec{1}
    }^{-N}\\
    \lesssim &
    \ang{
        \br{
            \bm{Y}^2+\bm{Y}
            \frac{\beta-1}{\lambda^{\frac{1}{3}}}x^2
            +\frac{\br{\beta-1}\br{\beta-2}}{2\lambda^{\frac{2}{3}}}x^4
        }
        \br{
            x\vec{\mathcal{U}}
            +
            \bm{Y}
            \vec{\mathcal{V}}
        }
        \wedge
        \bm{Y}^{2}
        \vec{1}
        \wedge
        \bm{Y}^3\vec{1}
    }^{-N}
    =:\mathcal{K}_6\br{x,y,X,\bm{Y}}\,,
\end{align*}
where two vectors are modified accordingly
\begin{equation*}
    \vec{\mathcal{U}}:=
    \vec{U}\br{X,y+\frac{2\bm{Y}}{\lambda^{\frac{1}{3}}}}
    ,\quad
    \vec{\mathcal{V}}:=\vec{V}\br{X,y+\frac{2\bm{Y}}{\lambda^{\frac{1}{3}}}}.
\end{equation*}
It remains to prove the normalized estimate
\begin{equation*}
    \nrm{
        \mathcal{K}_6\br{x,y,X,\bm{Y}}
    }_{L^1\br{
        dxdXdydY_1dY_2dY_3,
        \lambda^{-2\delta}\lesssim x\eqsim Y_j\lesssim 1
    }}
    \lesssim \lambda^{-8\delta},
\end{equation*}
whose argument is almost identical to the \(\br{\alpha,\beta}=\br{2,3}\) case.
\subsubsection{Non-parabola case}
If \(\alpha\neq 2\),
we shall perform Taylor expansion on the following expression:
\begin{align*}
    \br{
        T\br{X-x,\bm{Y}-y}\pm\frac{X-x}{2}
    }^{\gamma-1}
    = &
    \br{\frac{\bm{Y}-y}{\alpha\br{X-x}}}^{\frac{\gamma -1}{\alpha-1}}
    \pm
    \frac{\gamma-1}{2}
    \br{\frac{\bm{Y}-y}{\alpha\br{X-x}}}^{\frac{\gamma -2}{\alpha-1}}
    \br{X-x}\\
    + &\frac{\br{\gamma-1}\br{3\gamma-\alpha-4}}{24}
    \br{\frac{\bm{Y}-y}{\alpha\br{X-x}}}^{\frac{\gamma -3}{\alpha-1}}
    \br{X-x}^2+
    O\br{
        1/\lambda
    }.
\end{align*}
We again mimic what we have done for the \(\alpha=2\) case
\begin{align*}
    &
    K_6\br{X-\frac{x}{\lambda^{\frac{1}{3}}},y,X,y+\frac{\alpha \bm{Y}}{\lambda^{\frac{1}{3}}}}\\
    \lesssim &
    \Bigg<
        \Bigg\{
            \br{
                \bm{Y}^{\frac{2}{\alpha-1}}
                +
                \frac{\beta-1}{2}\cdot
                \bm{Y}^{\frac{1}{\alpha-1}}
                \cdot
                \frac{x^{\frac{\alpha}{\alpha-1}}}{\lambda^{\frac{1}{3}}}
                +\frac{\br{\beta-1}\br{3\beta-\alpha-4}}{24}
                \cdot
                \frac{x^{\frac{2\alpha}{\alpha-1}}}{\lambda^{\frac{2}{3}}}
            }
        x\vec{\mathcal{U}}\\
        &\phantom{<}+
        \Bigg(
            \bm{Y}^{\frac{2}{\alpha-1}}
            +\frac{\beta-\alpha}{2}\cdot \bm{Y}^{\frac{1}{\alpha-1}}\cdot
            \frac{x^{\frac{\alpha}{\alpha-1}}}{\lambda^{\frac{1}{3}}}+\Br{
                \frac{\br{\beta-1}\br{3\beta-\alpha-4}}{24}-\frac{\br{\alpha-1}\br{3\beta-\alpha-1}}{12}
            }
            \cdot\frac{x^{\frac{2\alpha}{\alpha-1}}}{\lambda^{\frac{2}{3}}}
        \Bigg)\bm{Y}\vec{\mathcal{V}}\Bigg\}\\
        &\wedge
        \br{
            \bm{Y}^{\frac{2}{\alpha-1}}
            -
            \frac{\beta-1}{2}\cdot
            \bm{Y}^{\frac{1}{\alpha-1}}
            \cdot
            \frac{x^{\frac{\alpha}{\alpha-1}}}{\lambda^{\frac{1}{3}}}
            +\frac{\br{\beta-1}\br{3\beta-\alpha-4}}{24}
            \cdot
            \frac{x^{\frac{2\alpha}{\alpha-1}}}{\lambda^{\frac{2}{3}}}
        }\vec{1}\\
        &\wedge
        \Bigg(
            \bm{Y}^{\frac{2}{\alpha-1}}-\frac{\beta-\alpha}{2}\cdot \bm{Y}^{\frac{1}{\alpha-1}}\cdot
            \frac{x^{\frac{\alpha}{\alpha-1}}}{\lambda^{\frac{1}{3}}}
            +\Br{
                \frac{\br{\beta-1}\br{3\beta-\alpha-4}}{24}-\frac{\br{\alpha-1}\br{3\beta-\alpha-1}}{12}
            }
            \cdot\frac{x^{\frac{2\alpha}{\alpha-1}}}{\lambda^{\frac{2}{3}}}
        \Bigg)\bm{Y}\vec{1}\Bigg>^{-N}\\
        =&: \mathcal{K}_6\br{x,y,X,\bm{Y}},
\end{align*}
where the two vectors are modified accordingly
\begin{equation*}
    \vec{\mathcal{U}}:=
    \vec{U}\br{X,y+\frac{\alpha\bm{Y}}{\lambda^{\frac{1}{3}}}}
    ,\quad
    \vec{\mathcal{V}}:=\vec{V}\br{X,y+\frac{\alpha\bm{Y}}{\lambda^{\frac{1}{3}}}}.
\end{equation*}
It remains to prove the normalized estimate
\begin{equation*}
    \nrm{
        \mathcal{K}_6\br{x,y,X,\bm{Y}}
    }_{
        L^1\br{
            dxdydXdY_1dY_2dY_3,
            \lambda^{-2\delta}\lesssim x\eqsim Y_j\lesssim 1
        }
    }
    \lesssim \lambda^{-8\delta}.
\end{equation*}
Exploiting the multilinearity of the alternating tensor products (determinant) gives the following expression:
\begin{equation}\label{eq_K6_rel_to_MN}
    \mathcal{K}_6\br{x,y,X,\bm{Y}}=
    \ang{
        \begin{pmatrix}
            1 &
            \frac{x^{\frac{\alpha}{\alpha-1}}}{\lambda^{\frac{1}{3}}} &
            \frac{x^{\frac{2\alpha}{\alpha-1}}}{\lambda^{\frac{2}{3}}}
        \end{pmatrix}
        \br{
            \bm{M}_{\bm{Y}}
            x\vec{\mathcal{U}}
            +\bm{N}_{\bm{Y}}\bm{Y}\vec{\mathcal{V}}
        }
        +O\br{1}
    }^{-N},
\end{equation}
where \(\bm{M}_{\bm{Y}},\bm{N}_{\bm{Y}}\in M_{3\times 3}\br{\R}\) are matrices with \(O\br{1}\) bounded generalized polynomials of \(Y_j\) as entries. We aim to show that neither of \(\det\bm{M}_{\bm{Y}}\) and \(\det\bm{N}_{\bm{Y}}\) are identically zero. We use the following fact: for \(O\br{1}\) bounded  vectors \(\vec{u},\vec{v}\in \R^3\),
\begin{align*}
    &
    \begin{pmatrix}
        1 &
        \frac{x^{\frac{\alpha}{\alpha-1}}}{\lambda^{\frac{1}{3}}} &
        \frac{x^{\frac{2\alpha}{\alpha-1}}}{\lambda^{\frac{2}{3}}}
    \end{pmatrix}
    \br{
        \bm{M}_{\bm{Y}}
        \vec{u}
        +\bm{N}_{\bm{Y}}\vec{v}
    }+O\br{1/\lambda}\\
    =&
    \Bigg\{
        \br{
            \bm{Y}^{\frac{2}{\alpha-1}}
            +
            \frac{\beta-1}{2}\cdot
            \bm{Y}^{\frac{1}{\alpha-1}}
            \cdot
            \frac{x^{\frac{\alpha}{\alpha-1}}}{\lambda^{\frac{1}{3}}}
            +\frac{\br{\beta-1}\br{3\beta-\alpha-4}}{24}
            \cdot
            \frac{x^{\frac{2\alpha}{\alpha-1}}}{\lambda^{\frac{2}{3}}}
        }
    \vec{u}\\
    &\phantom{<}+
    \Bigg(
        \bm{Y}^{\frac{2}{\alpha-1}}
        +\frac{\beta-\alpha}{2}\cdot \bm{Y}^{\frac{1}{\alpha-1}}\cdot
        \frac{x^{\frac{\alpha}{\alpha-1}}}{\lambda^{\frac{1}{3}}}+\Br{
            \frac{\br{\beta-1}\br{3\beta-\alpha-4}}{24}-\frac{\br{\alpha-1}\br{3\beta-\alpha-1}}{12}
        }
        \cdot\frac{x^{\frac{2\alpha}{\alpha-1}}}{\lambda^{\frac{2}{3}}}
    \Bigg)\vec{v}\Bigg\}\\
    &\wedge
    \br{
        \bm{Y}^{\frac{2}{\alpha-1}}
        -
        \frac{\beta-1}{2}\cdot
        \bm{Y}^{\frac{1}{\alpha-1}}
        \cdot
        \frac{x^{\frac{\alpha}{\alpha-1}}}{\lambda^{\frac{1}{3}}}
        +\frac{\br{\beta-1}\br{3\beta-\alpha-4}}{24}
        \cdot
        \frac{x^{\frac{2\alpha}{\alpha-1}}}{\lambda^{\frac{2}{3}}}
    }\vec{1}\\
    &\wedge
    \Bigg(
        \bm{Y}^{\frac{2}{\alpha-1}}-\frac{\beta-\alpha}{2}\cdot \bm{Y}^{\frac{1}{\alpha-1}}\cdot
        \frac{x^{\frac{\alpha}{\alpha-1}}}{\lambda^{\frac{1}{3}}}
        +\Br{
            \frac{\br{\beta-1}\br{3\beta-\alpha-4}}{24}-\frac{\br{\alpha-1}\br{3\beta-\alpha-1}}{12}
        }
        \cdot\frac{x^{\frac{2\alpha}{\alpha-1}}}{\lambda^{\frac{2}{3}}}
    \Bigg)\bm{Y}\vec{1}
\end{align*}
to compute the following matrices
\begin{equation*}
    \bm{M}_{\bm{Y}}
    \cdot
    \begin{pmatrix}
        \vec{1} & \bm{Y}\vec{1} & \bm{Y}^2\vec{1}
    \end{pmatrix}
    ,\quad
    \bm{N}_{\bm{Y}}
    \cdot
    \begin{pmatrix}
        \vec{1} & \bm{Y}\vec{1} & \bm{Y}^2\vec{1}
    \end{pmatrix}
\end{equation*}
as we did in \(\br{\alpha,\beta}=\br{2,3}\) case to derive the expression of \(\det \bm{M}_{\bm{Y}}\) and \(\det \bm{N}_{\bm{Y}}\) respectively. The alternating nature of \(\wedge\) simplifies the computation. Eventually, we obtain
\begin{align*}
    \det\bm{M}_{\bm{Y}}
    = &
    \det\br{
        \bm{Y}
    }^{\frac{2}{\alpha-1}}\cdot
    \begin{pmatrix}
        \br{\beta-1}\cdot\bm{Y}^{
            \frac{1}{\alpha-1},
            \frac{2}{\alpha-1},
            1+\frac{2}{\alpha-1}
        }\\
        -\frac{\br{\beta-1}\br{\beta-\alpha}}{2}\cdot
        \bm{Y}^{
            \frac{1}{\alpha-1},
            \frac{2}{\alpha-1},
            1+\frac{1}{\alpha-1}
        }
    \end{pmatrix}\\
    \wedge &
    \begin{pmatrix}
        \frac{2\beta-\alpha-1}{2}\cdot\bm{Y}^{
            1+\frac{1}{\alpha-1},
            \frac{2}{\alpha-1},
            1+\frac{2}{\alpha-1}
        }\\
        - \frac{\br{\beta-1}\br{2\beta-\alpha-1}}{4}\cdot
        \bm{Y}^{
            1+\frac{1}{\alpha-1},
            \frac{1}{\alpha-1},
            1+\frac{2}{\alpha-1}
        }
        +\frac{\br{\alpha-1}\br{3\beta-\alpha-1}}{12}
        \cdot
        \bm{Y}^{
            1,
            \frac{2}{\alpha-1},
            1+\frac{2}{\alpha-1}
        }
    \end{pmatrix}\\
    \det \bm{N}_{\bm{Y}}
    =&
    \begin{pmatrix}
        \frac{2\beta-\alpha-1}{2}\cdot
        \bm{Y}^{
            \frac{1}{\alpha-1},
            \frac{2}{\alpha-1},
            1+\frac{2}{\alpha-1}
        }\\
        - \frac{\br{\beta-\alpha}\br{2\beta-\alpha-1}}{4}\cdot
        \bm{Y}^{
            \frac{1}{\alpha-1},
            \frac{2}{\alpha-1},
            1+\frac{1}{\alpha-1}
        }
        -
        \frac{\br{\alpha-1}\br{3\beta-\alpha-1}}{12}
        \cdot
        \bm{Y}^{
            0,
            \frac{2}{\alpha-1},
            1+\frac{2}{\alpha-1}
        }
    \end{pmatrix}\\
    \wedge &
    \det\br{
        \bm{Y}
    }^{\frac{2}{\alpha-1}}\cdot
    \begin{pmatrix}
        \br{\beta-\alpha}\cdot\bm{Y}^{
            1+\frac{1}{\alpha-1},
            \frac{2}{\alpha-1},
            1+\frac{2}{\alpha-1}
        }\\
        -\frac{\br{\beta-\alpha}\br{\beta-1}}{2}\cdot
        \bm{Y}^{
            1+\frac{1}{\alpha-1},
            \frac{1}{\alpha-1},
            1+\frac{2}{\alpha-1}
        }
    \end{pmatrix}
    .
\end{align*}
One may now verify that the above two expressions are non-trivial generalized polynomials of \(\br{Y_1,Y_2,Y_3}\in \left[0,\infty\right)^3\) by considering the specific value of \(\bm{Y}=\begin{pmatrix}
    z & 0 & 0\\
    0 & 1 & 0\\
    0 & 0 & \epsilon
\end{pmatrix}=:\bm{Y}_\epsilon\) and evaluate the limits
\begin{equation*}
    \lim_{\epsilon \searrow 0} \det\br{\bm{Y}_\epsilon}^{\gamma_1}
    \det\bm{M}_{\bm{Y}_\epsilon},\quad
    \lim_{\epsilon \searrow 0} \det\br{\bm{Y}_\epsilon}^{\gamma_2} \det\bm{N}_{\bm{Y}_\epsilon}
\end{equation*}
for suitable choice of \(\gamma_1,\gamma_2\) depending on \(\alpha,\beta\). The rest follows from a more refined application of Van der Corput--type estimates addressing the generalized polynomial setting.


\bibliographystyle{plain}

\end{document}